\renewcommand\epsilon\varepsilon
\renewcommand\Re{\operatorname{Re}}
\newcommand\Ad{\operatorname{Ad}}
\newcommand\ad{\operatorname{ad}}
\newcommand\diag{\operatorname{diag}}
\newcommand\cs{\operatorname{C}^*}
\newcommand\bo{\mathcal{B}}
\newcommand\cb{\operatorname{cb}}
\newcommand\Tr{\operatorname{Tr}}
\newcommand\bbR{\mathbb{R}}
\newcommand\bbC{\mathbb{C}}
\newcommand\apschur{\mathrm{AP}_{p,\mathrm{cb}}^{\mathrm{Schur}}}
\newcommand\lra{\longrightarrow}
\DeclareMathOperator\GL{GL}
\DeclareMathOperator\SL{SL}
\DeclareMathOperator\Sp{Sp}
\DeclareMathOperator\U{U}
\DeclareMathOperator\SO{SO}
\DeclareMathOperator\SU{SU}
\DeclareMathOperator\rr{Rank_{\mathbb{R}}}
\theoremstyle{definition}
\newtheorem{thm}{Theorem}[section]
\newtheorem*{unnumberedtheorem}{Theorem}
\newtheorem{dfn}[thm]{Definition}
\newtheorem{lem}[thm]{Lemma}
\newtheorem{prp}[thm]{Proposition}
\newtheorem{cor}[thm]{Corollary}
\newtheorem{exm}[thm]{Example}
\newtheorem{rmk}[thm]{Remark}
\author{Uffe Haagerup}
\thanks{The first author is supported by ERC Advanced Grant no.~OAFPG 247321, the Danish Natural Science Research Council, and the Danish National Research Foundation through the Centre for Symmetry and Deformation.}
\address{Department of Mathematical Sciences, University of Copenhagen,
\newline Universitetsparken 5, DK-2100 Copenhagen \O, Denmark}
\email{haagerup@math.ku.dk}
\author{Tim de Laat}
\thanks{The second author is supported by the Danish National Research Foundation through the Centre for Symmetry and Deformation.}
\address{Department of Mathematical Sciences, University of Copenhagen,
\newline Universitetsparken 5, DK-2100 Copenhagen \O, Denmark}
\email{tlaat@math.ku.dk}
\title{Simple Lie groups without the Approximation Property}
\date{\today}
\begin{document}

\maketitle

\begin{abstract}
For a locally compact group $G$, let $A(G)$ denote its Fourier algebra, and let $M_0A(G)$ denote the space of completely bounded Fourier multipliers on $G$. The group $G$ is said to have the Approximation Property (AP) if the constant function $1$ can be approximated by a net in $A(G)$ in the weak-* topology on the space $M_0A(G)$. Recently, Lafforgue and de la Salle proved that $\SL(3,\bbR)$ does not have the AP, implying the first example of an exact discrete group without it, namely $\SL(3,\mathbb{Z})$. In this paper we prove that $\Sp(2,\bbR)$ does not have the AP. It follows that all connected simple Lie groups with finite center and real rank greater than or equal to two do not have the AP. This naturally gives rise to many examples of exact discrete groups without the AP.
\end{abstract}

\section{Introduction} \label{sec:introduction}
Let $G$ be a (second countable) locally compact group, and let $\lambda:G \lra \mathcal{B}(L^2(G))$ denote the left-regular representation, which is given by $(\lambda(x)\xi)(y)=\xi(x^{-1}y)$, where $x,y \in G$ and $\xi \in L^2(G)$. Let the Fourier algebra $A(G)$ be the space consisting of the coefficients of $\lambda$, as introduced by Eymard \cite{eymard},\cite{eymard2}. More precisely, $\varphi \in A(G)$ if and only if there exist $\xi,\eta \in L^2(G)$ such that for all $x \in G$ we have
\[
	\varphi(x)=\langle \lambda(x)\xi,\eta \rangle.
\]
The norm on $A(G)$ is defined by
\begin{equation} \nonumber
	\|\varphi\|_{A(G)}=\min \{ \|\xi\|\|\eta\| \mid \forall x \in G \; \varphi(x)=\langle \lambda(x)\xi,\eta \rangle \}.
\end{equation}
With this norm, $A(G)$ is a Banach space. We have $\|\varphi\|_{A(G)} \geq \|\varphi\|_{\infty}$ for all $\varphi \in A(G)$, and $A(G)$ is $\|.\|_{\infty}$-dense in $C_0(G)$.

In Eymard's work, the following characterization of $A(G)$ is given. For two functions $f,g \in L^2(G)$, the function $\varphi=f \ast \tilde{g}$, where $\tilde{g}(x)=\overline{\check{g}(x)}=\overline{g(x^{-1})}$ for $x \in G$, belongs to $A(G)$. Conversely, if $\varphi \in A(G)$, then we can find such a decomposition $\varphi=f \ast \tilde{g}$ so that $\|f\|_2\|g\|_2 = \|\varphi\|_{A(G)}$.

Another characterization of the Fourier algebra is given by the fact that $A(G)$ can be identified isometrically with the predual of the group von Neumann algebra $L(G)$ of $G$. The identification is given by the pairing $\langle T,\varphi \rangle=\langle Tf,g \rangle_{L^2(G)}$, where $T \in L(G)$ and $\varphi=\overline{g} \ast \check{f}$ for certain $f,g \in L^2(G)$.\\

A complex-valued function $\varphi$ is said to be a (Fourier) multiplier if and only if $\varphi\psi \in A(G)$ for all $\psi \in A(G)$. Note that a multiplier is a bounded and continuous function. Let $MA(G)$ denote the Banach space of multipliers of $A(G)$ equipped with the norm given by $\|\varphi\|_{MA(G)}=\|m_{\varphi}\|$, where $m_{\varphi}:A(G) \lra A(G)$ denotes the multiplication operator on $A(G)$ associated with $\varphi$. A multiplier $\varphi$ is called completely bounded if the operator $M_{\varphi}:L(G) \lra L(G)$ induced by $m_{\varphi}$ is completely bounded. The space of completely bounded multipliers is denoted by $M_0A(G)$, and with the norm $\|\varphi\|_{M_0A(G)}=\|M_{\varphi}\|_{\cb}$, it forms a Banach space. It is known that $A(G) \subset M_0A(G) \subset MA(G)$.

Completely bounded Fourier multipliers were first studied by Herz, although he defined them in a different way \cite{herz}. Hence, they are also called Herz-Schur multipliers. The equivalence of both notions was proved by Bo\.zejko and Fendler in \cite{bozejkofendler1}. They also gave an important characterization of completely bounded Fourier multipliers, namely, $\varphi \in M_0A(G)$ if and only if there exist bounded continuous maps $P,Q:G \lra \mathcal{H}$, where $\mathcal{H}$ is a Hilbert space, such that
\begin{equation} \label{eq:cbmcharacterization}
  \varphi(y^{-1}x)=\langle P(x),Q(y) \rangle
\end{equation}
for all $x,y \in G$. Here $\langle .,. \rangle$ denotes the inner product on $\mathcal{H}$. In this characterization, $\|\varphi\|_{M_0A(G)}=\min\{\|P\|_{\infty}\|Q\|_{\infty}\}$, where the minimum is taken over all possible pairs $(P,Q)$ for which equation \eqref{eq:cbmcharacterization} holds.\\

Completely bounded Fourier multipliers naturally give rise to the formulation of a certain approximation property, namely weak amenability, which was studied extensively for Lie groups (in chronological order) in \cite{decannierehaagerup}, \cite{haagerupgroupcsacbap}, \cite{cowlinghaagerup}, \cite{hansen}, \cite{dorofaeff}, \cite{cowlingdorofaeffseegerwright}. Other approximation properties can be formulated in terms of multipliers as well (see \cite[Chapter 12]{brownozawa}).

Recall that a locally compact group $G$ is amenable if there exists a left-invariant mean on $L^{\infty}(G)$. It was proven by Leptin \cite{leptin} that $G$ is amenable if and only if $A(G)$ has a bounded approximate unit, i.e., there is a net $(\varphi_{\alpha})$ in $A(G)$ with $\sup_{\alpha} \|\varphi_{\alpha}\|_{A(G)} \leq 1$ such that for all $\psi \in A(G)$ we have $\lim_{\alpha} \|\varphi_{\alpha}\psi-\psi\|_{A(G)}=0$.

A locally compact group $G$ is called weakly amenable if and only if there is a net $(\varphi_{\alpha})$ in $A(G)$ with $\sup_{\alpha} \|\varphi_{\alpha}\|_{M_0A(G)} \leq C$ for some $C > 0$, such that $\varphi_{\alpha} \to 1$ uniformly on compact subsets of $G$. The infimum of these constants $C$ is denoted by $\Lambda(G)$, and we will put $\Lambda(G)=\infty$ if $G$ is not weakly amenable.

Amenability of a group $G$ implies weak amenability with $\Lambda(G)=1$. Weak amenability was first studied in \cite{decannierehaagerup}, in which de Canni\`ere and the first author proved that the free group $\mathbb{F}_n$ on $n$ generators with $n \geq 2$ is weakly amenable with $\Lambda(\mathbb{F}_n)=1$. This also implied that weak amenability is strictly weaker than amenability, since $\mathbb{F}_n$ is not amenable.

The constant $\Lambda(G)$ is known for every connected simple Lie group $G$ and depends on the real rank of $G$. First, note that if $G$ has real rank zero, then $G$ is amenable. A connected simple Lie group $G$ with real rank one is locally isomorphic to one of the groups $\SO(n,1)$, $\SU(n,1)$, $\Sp(n,1)$, with $n \geq 2$, or to $F_{4(-20)}$. It is known that
\[
	\Lambda(G)=\begin{cases} 1 & \textrm{if } G \textrm{ is locally isomorphic to } \SO(n,1) \textrm{ or } \SU(n,1), \\ 2n-1 & \textrm{if } G \textrm{ is locally isomorphic to } \Sp(n,1), \\ 21 & \textrm{if } G \textrm{ is locally isomorphic to } F_{4(-20)}. \end{cases}
\]
This was proved by Cowling and the first author for groups with finite center \cite{cowlinghaagerup}. The finite center condition was removed by Hansen \cite{hansen}.

The first author proved that all connected simple Lie groups with finite center and real rank greater than or equal to two are not weakly amenable by using the fact that any such group contains a subgroup locally isomorphic to $\SL(3,\bbR)$ or $\Sp(2,\bbR)$, neither of which is weakly amenable \cite{haagerupgroupcsacbap}. Later, Dorofaeff proved that this result also holds for such Lie groups with infinite center \cite{dorofaeff}. Recently, an analogue of this result was proved by Lafforgue for algebraic Lie groups over non-archimedean fields \cite{lafforgueanalogue}. In 2005, Cowling, Dorofaeff, Seeger and Wright gave a characterization of weak amenability for almost all connected Lie groups \cite{cowlingdorofaeffseegerwright}.

A weaker approximation property defined in terms of completely bounded Fourier multipliers was introduced by the first author and Kraus \cite{haagerupkraus}.
\begin{dfn}
A locally compact group $G$ is said to have the Approximation Property for groups (AP) if there is a net $(\varphi_{\alpha})$ in $A(G)$ such that $\varphi_{\alpha} \to 1$ in the $\sigma(M_0A(G),M_0A(G)_*)$-topology, where $M_0A(G)_*$ denotes the natural predual of $M_0A(G)$, as introduced in \cite{decannierehaagerup}.
\end{dfn}
It was proved by the first author and Kraus that if $G$ is a locally compact group and $\Gamma$ is a lattice in $G$, then $G$ has the AP if and only if $\Gamma$ has the AP. The AP has some nice stability properties that weak amenability does not have, e.g., if $H$ is a closed normal subgroup of a locally compact group $G$ such that both $H$ and $G / H$ have the AP, then $G$ has the AP. This implies that the group $\SL(2,\mathbb{Z}) \rtimes \mathbb{Z}^2$ has the AP, but it was proven in \cite{haagerupgroupcsacbap} that this group is not weakly amenable, so the AP is strictly weaker than weak amenability.
 
A natural question to ask is which groups do have the AP. When this property was introduced, it was not clear that there even exist groups without it, but it was conjectured by the first author and Kraus that $\SL(3,\mathbb{Z})$ would be such a group. This conjecture was recently proved by Lafforgue and de la Salle \cite{ldls}.

Recall that a countable discrete group $\Gamma$ is exact if and only if its reduced group $\cs$-algebra is exact. For discrete groups it is known that the AP implies exactness \cite[Section 12.4]{brownozawa}. Note that the result of Lafforgue and de la Salle also gives the first example of an exact group without the AP. In their paper the property of completely bounded approximation by Schur multipliers on $S^p(L^2(G))$, denoted by $\apschur$, was introduced. For discrete groups, this property is weaker than the AP for all $p \in (1,\infty)$. Lafforgue and de la Salle proved that $\SL(3,\bbR)$ does not satisfy the $\apschur$ for certain values of $p$ in this interval, implying that the exact group $\SL(3,\mathbb{Z})$ indeed fails to have the AP, since both the AP and the $\apschur$ pass from the group to its lattices and from its lattices to the group.\\

The main part of this paper concerns the proof of the following result.
\begin{unnumberedtheorem}
  The group $\Sp(2,\bbR)$ does not have the AP.
\end{unnumberedtheorem}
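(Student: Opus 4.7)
The plan is to adapt the framework developed by Lafforgue and de la Salle for $\SL(3,\bbR)$, exploiting in its place the harmonic analysis of $\Sp(2,\bbR)$ relative to its maximal compact subgroup $K=\U(2)$. Specifically, the goal is to establish a rigidity theorem for bi-$K$-invariant elements of $M_0A(G)$ that is incompatible with the existence of an AP-approximating net. As in \cite{ldls}, it is natural to pass through the property $\apschur$ and its associated $p$-Schur multiplier norms; for our purposes it suffices to derive a direct contradiction from the assumption of the AP.

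\textbf{Step 1: Reduction to $K$-bi-invariant functions.} Assume $G=\Sp(2,\bbR)$ has the AP and let $(\varphi_\alpha)\subset A(G)$ be a net with $\varphi_\alpha\to 1$ in $\sigma(M_0A(G),M_0A(G)_*)$. The left and right actions of $K\times K$ on $G$ are isometric on $M_0A(G)$, continuous for the weak-$*$ topology, preserve $A(G)$, and fix the constant function $1$. Averaging the $\varphi_\alpha$ against the Haar measure of $K\times K$ therefore yields a new net $\tilde\varphi_\alpha\in A(G)$ which is $K$-bi-invariant and still converges to $1$ in the same topology. By the $KAK$-decomposition, each $\tilde\varphi_\alpha$ is determined by its values on the positive Weyl chamber $A^+$ of the maximal $\bbR$-split torus $A=\{\diag(e^{t_1},e^{t_2},e^{-t_1},e^{-t_2}):t_1,t_2\in\bbR\}$.

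\textbf{Step 2: Boundary rigidity.} The crux is to prove the following: there exists a bounded linear functional $L$ on the $K$-bi-invariant part of $M_0A(G)$ — continuous in the $\sigma(M_0A(G),M_0A(G)_*)$-topology — which for any $\varphi\in M_0A(G)\cap C_0(G)$ satisfies $L(\varphi)=0$, but with $L(1)\neq 0$. The intended construction of $L$ is a weighted limit (or Banach mean) of the values $\varphi(a(t))$ as $t\to\infty$ along a well-chosen ray in $A^+$, or more generally an averaging along the distinguished sphere in $A^+$. Since $A(G)\subset C_0(G)$, this gives $L(\tilde\varphi_\alpha)=0$ for all $\alpha$, whereas $\tilde\varphi_\alpha\to 1$ forces $L(\tilde\varphi_\alpha)\to L(1)\neq 0$ — the contradiction that completes the proof.

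\textbf{Main obstacle.} The deep step is establishing the continuity of $L$, which amounts to quantitative lower bounds for $\|\cdot\|_{M_0A(G)}$-norms of carefully chosen bi-$K$-invariant functions supported near infinity in $A^+$. Via the Bożejko--Fendler characterization \eqref{eq:cbmcharacterization}, these bounds translate to Schur multiplier norm estimates on $K\times K/K$, which in turn rest on the branching theory of irreducible representations of $\Sp(2)=\U\Sp(4)$ restricted to $\U(2)$. For $\SL(3,\bbR)$, Lafforgue and de la Salle drive this analysis through the representation theory of $\SU(3)$ and its homogeneous spaces; for $\Sp(2,\bbR)$ the analogous estimates must be carried out on $\Sp(2)/\U(2)$, whose Weyl group and branching rules are genuinely different. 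Producing sharp enough $p$-Schur multiplier estimates in this setting, and packaging them into the continuity of $L$, is where the technical heart of the argument will lie.
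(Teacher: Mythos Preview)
Your overall architecture is right: reduce to $K$-bi-invariant multipliers by averaging (this is Lemma~\ref{lem:restrictiontoKbiinvariantfunctions} in the paper), and then prove a rigidity statement for such multipliers that prevents $1$ from lying in the weak-$*$ closure of $A(G)\cap C(K\backslash G/K)$. But both your technical route and your endgame diverge from the paper, and one of the divergences is a genuine gap.

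\textbf{The gap.} You propose to build a functional $L$ as ``a weighted limit (or Banach mean) of the values $\varphi(a(t))$ as $t\to\infty$'' and then assert that $L$ is $\sigma(M_0A(G),M_0A(G)_*)$-continuous. A limit-at-infinity or Banach-mean functional is essentially never in the predual $M_0A(G)_*$; it lives only in the full dual. The paper does not attempt to exhibit such an $L$ directly. Instead it proves the quantitative decay estimate of Proposition~\ref{prp:sp2ab},
\[
|\varphi(D(\alpha_1,\alpha_2))-\varphi_\infty|\le C_1 e^{-C_2\|\alpha\|_2}\,\|\varphi\|_{M_0A(G)},
\]
and then uses Lemma~\ref{lem:kbiinvariantboundedmultipliersclosed} together with the Krein--Smulian Theorem to show that the subspace $M_0A(G)\cap C_0(K\backslash G/K)$ is weak-$*$ closed. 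That closedness is what replaces your continuity claim; without it (or an equivalent argument) your Step~2 does not go through.

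\textbf{The technical route.} The paper explicitly avoids $\apschur$ and $p$-Schur norms, and it does not use the branching theory of the compact form $\Sp(2)=\U\Sp(4)$ down to $\U(2)$. What it does instead is identify two \emph{compact} Gelfand pairs inside $G$, namely $(\U(2),\U(1))$ and $(\SU(2),\SO(2))$, together with one-parameter families $k\mapsto D_\alpha k D_\alpha$ and $k\mapsto D'_\alpha kvD'_\alpha$ transferring a $K$-bi-invariant $\varphi\in M_0A(G)$ to bi-invariant multipliers on $K$ (Lemmas~\ref{lem:fromGtoK} and~\ref{lem:chi}). The spherical functions of those two pairs are Jacobi polynomials $P_q^{(0,p-q)}$ and Legendre polynomials $P_n$, for which uniform H\"older estimates (Theorem~\ref{thm:hs}, Corollary~\ref{cor:hoelder}, Lemma~\ref{lem:lpestimates}) yield, via Proposition~\ref{prp:cbfmcgp}, inequalities of the form $|\varphi(g_1)-\varphi(g_2)|\le C\|\varphi\|_{M_0A(G)}\cdot(\text{small})$. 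These are \emph{upper} bounds on differences of values, not lower bounds on $M_0A$-norms as you suggest. A two-path argument in $\overline{A^+}$ (along the level sets $\sinh^2\alpha_1+\sinh^2\alpha_2=\text{const}$ and $\sinh\alpha_1\sinh\alpha_2=\text{const}$; see Lemmas~\ref{lem:betagammacir}--\ref{lem:limit}) then propagates these local estimates into the global decay of Proposition~\ref{prp:sp2ab}.

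So: your Step~1 is fine, but the analytic heart you describe (Schur-norm lower bounds, $\Sp(2)/\U(2)$ branching) is not what the paper does, and your Step~2 as stated has a hole where the Krein--Smulian argument should be.
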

Together with the fact that $\SL(3,\bbR)$ does not have the AP, the above result gives rise to the following theorem.
\begin{unnumberedtheorem}
	Let $G$ be a connected simple Lie group with finite center and real rank greater than or equal to two. Then $G$ does not have the AP.
\end{unnumberedtheorem}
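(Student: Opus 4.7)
The plan is to reduce the general case to the two base cases $\SL(3,\bbR)$ and $\Sp(2,\bbR)$ by combining a structural fact about real semisimple Lie groups with the stability properties of the AP, mirroring the first author's argument in \cite{haagerupgroupcsacbap} for the analogous statement on weak amenability.

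The first ingredient is the fact, established in \cite{haagerupgroupcsacbap}, that every connected simple Lie group $G$ with finite center and real rank at least two contains a closed connected Lie subgroup $H$ locally isomorphic to $\SL(3,\bbR)$ or to $\Sp(2,\bbR)$. At the Lie algebra level this amounts to exhibiting a copy of $\mathfrak{sl}(3,\bbR)$ or $\mathfrak{sp}(2,\bbR)$ inside every real simple Lie algebra of real rank at least two, which one reads off from the three rank two restricted root systems $A_2$, $B_2 = C_2$, $G_2$ after noting that every simple Lie algebra of higher rank contains a rank two semisimple subalgebra. The corresponding analytic subgroup is closed by the standard theorem that a connected semisimple analytic subgroup of a Lie group is automatically closed, and it inherits a finite center because $G$, having finite center, is linear and hence so is $H$.

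The second ingredient is the observation that the AP is a local-isomorphism invariant within the category of connected Lie groups with finite center. Two such groups with a common Lie algebra are both quotients of a shared finite cover by finite central subgroups, so it is enough to show that the AP is preserved in both directions by any short exact sequence $1 \to F \to G_1 \to G_2 \to 1$ in which $F$ is finite. One direction follows from the extension stability of \cite{haagerupkraus}: if $G_2$ has the AP, then, since $F$ is amenable and therefore has the AP, so does $G_1$. The converse direction I would handle by averaging completely bounded Fourier multipliers over $F$ via the Bo\.zejko--Fendler characterisation \eqref{eq:cbmcharacterization}, thereby descending a witnessing net from $G_1$ to the quotient $G_2$.

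Granted these two ingredients, the conclusion is immediate: the closed subgroup $H \subset G$ is locally isomorphic to one of $\SL(3,\bbR)$ or $\Sp(2,\bbR)$, both of which fail the AP --- the former by \cite{ldls} and the latter by the main theorem of this paper, stated just above. By local-isomorphism invariance $H$ itself fails the AP, and since the AP passes to closed subgroups \cite{haagerupkraus}, $G$ fails the AP as well. The main obstacle is concentrated entirely in the main theorem on $\Sp(2,\bbR)$; the reductions here are formal. The one step meriting explicit verification is the quotient direction in the invariance argument --- one must check that averaging over the finite kernel preserves convergence in the $\sigma(M_0A(G_2),M_0A(G_2)_*)$-topology and not merely pointwise convergence.
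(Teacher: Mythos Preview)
Your approach is the paper's: locate a closed subgroup $H$ locally isomorphic to $\SL(3,\bbR)$ or $\Sp(2,\bbR)$, invoke local-isomorphism invariance of the AP among finite-center groups (this is Proposition~\ref{prp:locisoap}, and your sketch of it --- including the averaging step you flag --- matches that proof), and use that the AP passes to closed subgroups.

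One intermediate claim is false, however: ``$G$, having finite center, is linear''. The metaplectic group $\mathrm{Mp}(2,\bbR)$, the connected double cover of $\Sp(2,\bbR)$, is a connected simple Lie group with finite center and real rank two, yet admits no faithful finite-dimensional representation. The paper sidesteps this by first passing to the adjoint group $\Ad(G)=G/Z(G)$ (citing Wang \cite{wang}), which is linear by construction; the rank-two subgroup $H$ is then located inside $\Ad(G)$, so $H$ is genuinely linear and hence has finite center. Since Proposition~\ref{prp:locisoap} already lets one replace $G$ by any locally isomorphic finite-center group, this reduction costs nothing and repairs your argument. The paper also cites Dorofaeff \cite{dorofaeff} specifically for the closedness of $H$ and Borel--Tits \cite{boreltits}, Margulis \cite{margulis} for the rank-two structural fact, rather than appealing to the unpublished \cite{haagerupgroupcsacbap} or an unnamed ``standard theorem''.
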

In \cite{effrosruanap}, Effros and Ruan introduced the operator approximation property (OAP) for $C^{\ast}$-algebras and the weak-* operator approximation property (w*OAP) for von Neumann algebras. By the results of \cite[Section 2]{haagerupkraus}, it follows that for every lattice $\Gamma$ in a connected simple Lie group with finite center and real rank greater than or equal to two, the reduced group $C^{\ast}$-algebra $C_{\lambda}^{\ast}(\Gamma)$ does not have the OAP and the group von Neumann algebra $L(\Gamma)$ does not have the w*OAP.

A natural question is whether all connected simple Lie groups with real rank greater than or equal to two fail to have the AP, i.e., if the last mentioned theorem also holds for groups with infinite center. As of now, we do not know the answer to this question (see the comments in Section \ref{sec:simpleliegroupsnotap}).

This paper is organized as follows. In Section \ref{sec:preliminaries} we recall and prove some results about Lie groups, Gelfand pairs, and the AP. Some of these may be of independent interest.

In Section \ref{sec:sp2} we give a proof of the fact that $\Sp(2,\bbR)$ does not have the AP. It turns out to be sufficient to consider completely bounded Fourier multipliers on $\Sp(2,\bbR)$, rather than multipliers on Schatten classes, so we do not use the $\apschur$.

In Section \ref{sec:simpleliegroupsnotap} we prove the earlier mentioned theorem that all connected simple Lie groups with finite center and real rank greater than or equal to two do not have the AP.

In Section \ref{sec:sl3} we give a new proof of the result of Lafforgue and de la Salle that $\SL(3,\bbR)$ does not have the AP based on the method of Section \ref{sec:sp2}.

\section{Lie groups and the Approximation Property} \label{sec:preliminaries}
In this section we recall some results about Lie groups, Gelfand pairs, and the AP, and we prove some technical results.

\subsection{Polar decomposition}
For the details and proofs of the unproved results in this section, we refer the reader to \cite{helgasonlie}, \cite{knapp}.

Recall that every connected semisimple Lie group $G$ with finite center can be decomposed as $G=KAK$, where $K$ is a maximal compact subgroup (unique up to conjugation) and $A$ is an abelian Lie group such that its Lie algebra $\mathfrak{a}$ is a Cartan subspace of the Lie algebra $\mathfrak{g}$ of $G$. The dimension of $\mathfrak{a}$ is called the real rank of $G$ and is denoted by $\rr(G)$. The real rank of a Lie group is an important concept for us, since the main result is formulated for Lie groups with certain real ranks. The $KAK$ decomposition, also called the polar decomposition, is in general not unique. After choosing a set of positive roots and restricting to the closure $\overline{A^{+}}$ of the positive Weyl chamber $A^{+}$, we still have $G=K\overline{A^{+}}K$. Moreover, if $g=k_1ak_2$, where $k_1,k_2 \in K$ and $a \in \overline{A^{+}}$, then $a$ is unique. Note that we can choose any Weyl chamber to be the positive one by choosing the correct polarization. For the purposes of this paper, the 
existence and the explicit form of the polar decomposition for two certain groups is important.
\begin{exm}[The symplectic groups] \label{exm:symplecticgroup}
Let the symplectic group be defined as the Lie group
\[
	\Sp(n,\bbR):=\{g \in \GL(2n,\bbR) \mid g^t J g = J\},
\]
where
\[
  J=\left( \begin{array}{cc} 0 & I_n \\ -I_n & 0 \end{array} \right).
\]
Here $I_n$ denotes the $n \times n$ identity matrix. We will only consider the case $n=2$ from now on.

The maximal compact subgroup $K$ of $\Sp(2,\bbR)$ is given by
\[
  K= \bigg\{ \left( \begin{array}{cc} A & -B \\ B & A \end{array} \right) \in \mathrm{M}_4(\bbR) \biggm\vert A+iB \in \U(2) \bigg\}.
\]
This group is isomorphic to $\U(2)$. The embedding of an arbitrary element of $\U(2)$ into $\Sp(2,\bbR)$ under this isomorphism is given by
\[
  \left( \begin{array}{cc} a+ib & e+if \\ c+id & g+ih \end{array} \right) \mapsto \left( \begin{array}{cccc} a & e & -b & -f \\ c & g & -d & -h \\ b & f & a & e \\ d & h & c & g \end{array} \right),
\]
where $a,b,c,d,e,f,g,h \in \bbR$.

A polar decomposition of $\Sp(2,\bbR)$ is given by $\Sp(2,\bbR)=K\overline{A^{+}}K$, where
\[
	\overline{A^{+}}=\left\{D(\alpha_1,\alpha_2)= \left( \begin{array}{cccc} e^{\alpha_1} & 0 & 0 & 0 \\ 0 & e^{\alpha_2} & 0 & 0 \\ 0 & 0 & e^{-\alpha_1} & 0 \\ 0 & 0 & 0 & e^{-\alpha_2} \end{array} \right) \Biggm\vert \alpha_1 \geq \alpha_2 \geq 0\right\}.
\]
\end{exm}

\begin{exm}[The special linear group] \label{exm:sl3}
Consider the special linear group $\SL(3,\bbR)$. Its maximal compact subgroup is $K=\SO(3)$, sitting naturally inside $\SL(3,\bbR)$. A polar decomposition is given by $\SL(3,\bbR)=K\overline{A^{+}}K$, where
\[
	\overline{A^{+}}=\left\{ \left( \begin{array}{ccc} e^{\alpha_1} & 0 & 0 \\ 0 & e^{\alpha_2} & 0 \\ 0 & 0 & e^{\alpha_3} \end{array} \right) \Biggm\vert \alpha_1 \geq \alpha_2 \geq \alpha_3,\,\alpha_1+\alpha_2+\alpha_3=0 \right\}.
\]
\end{exm}

\subsection{Gelfand pairs and spherical functions} \label{subsec:gpsf}
Let $G$ be a locally compact group and $K$ a compact subgroup. We denote the (left) Haar measure on $G$ by $dx$ and the normalized Haar measure on $K$ by $dk$. A function $\varphi:G \lra \bbC$ is said to be $K$-bi-invariant if for all $g \in G$ and $k_1,k_2 \in K$, then we have $\varphi(k_1gk_2)=\varphi(g)$. We identify the space of continuous $K$-bi-invariant functions with the space $C(K \backslash G \slash K)$. If the subalgebra $C_c(K \backslash G \slash K)$ of the convolution algebra $C_c(G)$ is commutative, then the pair $(G,K)$ is said to be a Gelfand pair, and $K$ is said to be a Gelfand subgroup of $G$. Equivalently, the pair $(G,K)$ is a Gelfand pair if and only if for every irreducible representation $\pi$ on a Hilbert space $\mathcal{H}$ the space
\[
  \mathcal{H}_e=\{ \xi \in \mathcal{H} \mid \forall k \in K:\,\pi(k)\xi=\xi \}
\]
is at most one-dimensional.

For $\varphi \in C(G)$, define $\varphi^{K} \in C(K \backslash G \slash K)$ by
\[
	\varphi^{K}(g)=\int_{K \times K} \varphi(kgk^{\prime})dkdk^{\prime}.
\]

A continuous $K$-bi-invariant function $h:G \lra \bbC$ is called a spherical function if the functional $\chi$ on $C_c(K \backslash G \slash K)$ given by
\[
	\chi(\varphi)=\int_G \varphi(x)h(x^{-1})dx, \quad \varphi \in C_c(K \backslash G \slash K)
\]
defines a nontrivial character, i.e., $\chi(\varphi \ast \psi)=\chi(\varphi)\chi(\psi)$ for all $\varphi,\psi \in C_c(K \backslash G \slash K)$. The following characterization of spherical functions will be used later: a continuous $K$-bi-invariant function $h:G \lra \bbC$ not identical to zero is a spherical function if and only if for all $x,y \in G$
\[
  \int_K h(xky)dk=h(x)h(y).
\]
In particular, $h(e)=1$.

Spherical functions arise as the matrix coefficients of $K$-invariant vectors in irreducible representations of $G$. Hence, they give rise to interesting decompositions of functions on $G$.

For an overview of the theory of Gelfand pairs and spherical functions, we refer the reader to \cite{faraut}, \cite{vandijk}.

\subsection{Multipliers on compact Gelfand pairs}
For the study of completely bounded Fourier multipliers on a Gelfand pair it is natural to look at multipliers that are bi-invariant with respect to the Gelfand subgroup. In the case of a compact Gelfand pair $(G,K)$, i.e., $G$ is a compact group and $K$ a closed subgroup such that $(G,K)$ is a Gelfand pair, we get a useful decomposition of completely bounded Fourier multipliers in terms of spherical functions.

Suppose in this section that $(G,K)$ is a compact Gelfand pair. Recall that for compact groups every representation on a Hilbert space is equivalent to a unitary representation, that every irreducible representation is finite-dimensional, and that every unitary representation is the direct sum of irreducible ones. Denote by $dx$ and $dk$ the normalized Haar measures on $G$ and $K$ respectively. Recall as well that for a Gelfand pair every irreducible representation $\pi$ on $\mathcal{H}$ the space $\mathcal{H}_e$ as defined in Section \ref{subsec:gpsf} is at most one-dimensional. Let $P_{\pi}=\int_K \pi(k)dk$ denote the projection onto $\mathcal{H}_e$, and set $\hat{G}_K=\{ \pi \in \hat{G} \mid P_{\pi} \neq 0 \}$, where $\hat{G}$ denotes the unitary dual of $G$, i.e., the set of equivalence classes of unitary irreducible representations of $G$.

\begin{prp} \label{prp:cbfmcgp}
  Let $(G,K)$ be a compact Gelfand pair, and let $\varphi$ be a $K$-bi-invariant completely bounded Fourier multiplier. Then $\varphi$ has a unique decomposition
\[
 \varphi(x)=\sum_{\pi \in \hat{G}_K} c_{\pi}h_{\pi}(x), \quad x \in G.
\]
  where $h_{\pi}(x)=\langle \pi(x)\xi_{\pi},\xi_{\pi} \rangle$ is the positive definite spherical function associated with the representation $\pi$ with $K$-invariant cyclic vector $\xi_{\pi}$, and $\sum_{\pi \in \hat{G}_K} |c_{\pi}|=\|\varphi\|_{M_0A(G)}$.
\end{prp}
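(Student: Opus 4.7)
The plan is to combine Peter--Weyl harmonic analysis on the compact Gelfand pair $(G,K)$ with the Bo\.zejko--Fendler characterisation of $M_0A(G)$, and to verify the norm identity by two separate inequalities.

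\textbf{Decomposition and upper bound.} By Peter--Weyl, $L^2(G) = \bigoplus_{\pi \in \hat G} \mathcal H_\pi \otimes \bar{\mathcal H}_\pi$, and every continuous function on $G$ admits a canonical expansion into matrix coefficients of irreducible representations. When $\varphi$ is $K$-bi-invariant, averaging on the left and right in each matrix coefficient forces the $\pi$-block to lie in the $(K,K)$-fixed subspace $P_\pi \mathcal H_\pi \otimes P_\pi \bar{\mathcal H}_\pi$. The Gelfand multiplicity-one property makes this space at most one-dimensional; for $\pi \in \hat G_K$ it is exactly $\mathbb C\,\xi_\pi \otimes \bar\xi_\pi$ and contributes $c_\pi h_\pi$ for a uniquely determined scalar $c_\pi = d_\pi \int_G \varphi(x)\overline{h_\pi(x)}\,dx$, and it vanishes for $\pi \notin \hat G_K$. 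This gives existence and uniqueness of the decomposition. For the upper bound, each spherical function $h_\pi = \langle \pi(\cdot)\xi_\pi, \xi_\pi\rangle$ is positive definite with $h_\pi(e)=1$, so $\|h_\pi\|_{B(G)}=1$ and hence $\|h_\pi\|_{M_0A(G)} \leq 1$ via the contractive inclusion $B(G) \hookrightarrow M_0A(G)$; the triangle inequality delivers $\|\varphi\|_{M_0A(G)} \leq \sum_{\pi \in \hat G_K} |c_\pi|$.

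\textbf{Lower bound.} Apply the Bo\.zejko--Fendler characterisation \eqref{eq:cbmcharacterization} to choose bounded continuous $P, Q: G \to \mathcal H$ with $\varphi(y^{-1}x) = \langle P(x), Q(y)\rangle$ and $\|P\|_\infty \|Q\|_\infty = \|\varphi\|_{M_0A(G)}$. To exploit the $K$-bi-invariance, lift to the enlarged Hilbert space $L^2(K) \otimes \mathcal H$ by setting $\tilde P(x)(k) = P(xk)$ and $\tilde Q(y)(k) = Q(yk)$. A direct computation, using that a $K$-bi-invariant $\varphi$ is automatically $K$-conjugation invariant, gives
\[
  \langle \tilde P(x), \tilde Q(y)\rangle = \int_K \varphi(k^{-1} y^{-1} x k)\,dk = \varphi(y^{-1}x),
\]
while $\|\tilde P\|_\infty \leq \|P\|_\infty$ and $\|\tilde Q\|_\infty \leq \|Q\|_\infty$. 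The maps $\tilde P, \tilde Q$ are further equivariant under right $K$-multiplication with respect to the left regular action of $K$ on $L^2(K)$. Decomposing $L^2(K) \otimes \mathcal H$ into $K$-isotypical components, the Gelfand multiplicity-one property singles out, for each $\pi \in \hat G_K$, a single scalar direction whose extraction produces vectors $a_\pi, b_\pi$ with $c_\pi = \langle a_\pi, b_\pi\rangle$ and Parseval-type orthogonality bounds $\sum_\pi \|a_\pi\|^2 \leq \|\tilde P\|_\infty^2$, $\sum_\pi \|b_\pi\|^2 \leq \|\tilde Q\|_\infty^2$. Cauchy--Schwarz then gives $\sum_\pi |c_\pi| \leq \|P\|_\infty \|Q\|_\infty = \|\varphi\|_{M_0A(G)}$, closing this direction.

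\textbf{Main obstacle.} The real work is in the lower bound, and the central difficulty is reconciling the $L^\infty$-type data supplied by Bo\.zejko--Fendler with the $\ell^1$-type conclusion on the spherical coefficients, since these are a priori of incompatible orders. The bridge is the rigidity of the multiplicity-one setting: it forces each $\pi \in \hat G_K$ to occupy just one scalar channel in the enlarged Hilbert space, so that Parseval orthogonality across channels, combined with Cauchy--Schwarz, converts the sup-norm control into the desired $\ell^1$ estimate.
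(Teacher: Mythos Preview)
Your lower-bound argument has a genuine gap, and you have missed the one-line reduction that makes the whole proposition elementary.

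\textbf{The missed simplification.} Since $G$ is compact, the constant function $1$ lies in $A(G)$ with $\|1\|_{A(G)}=1$ (it is the coefficient of the trivial representation). Hence for any $\varphi\in M_0A(G)$ one has $\|\varphi\|_{A(G)}=\|\varphi\cdot 1\|_{A(G)}\le \|\varphi\|_{M_0A(G)}\|1\|_{A(G)}=\|\varphi\|_{M_0A(G)}$, and the reverse inequality is always true. So $M_0A(G)=A(G)$ isometrically. The paper then uses the standard identification $A(G)\cong\bigoplus_{\pi\in\hat G}^{\ell^1} S_1(\mathcal H_\pi)$ (as the predual of $L(G)\cong\bigoplus^{\ell^\infty} B(\mathcal H_\pi)$): write $\varphi(x)=\sum_\pi\Tr(S_\pi\pi(x))$ with $\|\varphi\|_{A(G)}=\sum_\pi\|S_\pi\|_1$, observe that $K$-bi-invariance lets one replace $S_\pi$ by $P_\pi S_\pi P_\pi=c_\pi P_\pi$, and use that $P_\pi$ has rank one so $\|c_\pi P_\pi\|_1=|c_\pi|$. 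Both inequalities fall out at once; no Bo\.zejko--Fendler manipulation is needed.

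\textbf{Where your argument breaks.} In your lower bound you decompose $L^2(K)\otimes\mathcal H$ into \emph{$K$-isotypical components}, which are indexed by $\hat K$, and then assert that ``the Gelfand multiplicity-one property singles out, for each $\pi\in\hat G_K$, a single scalar direction''. But the multiplicity-one statement concerns $K$-fixed vectors inside irreducible \emph{$G$}-representations; it says nothing about the $K$-type decomposition of your auxiliary Hilbert space, and there is no mechanism in your sketch that produces, for each $\pi\in\hat G_K$, vectors $a_\pi,b_\pi$ with $c_\pi=\langle a_\pi,b_\pi\rangle$ together with the claimed Parseval bounds $\sum_\pi\|a_\pi\|^2\le\|\tilde P\|_\infty^2$. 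Your equivariance $\tilde P(xk')=(\lambda_K(k')^{-1}\otimes 1)\tilde P(x)$ relates different arguments $x$ to a $K$-action on the target, but it does not separate the contributions of distinct $\pi\in\hat G_K$ into orthogonal channels in $L^2(K)\otimes\mathcal H$. As written, the $\ell^\infty$-to-$\ell^1$ bridge you describe in your ``Main obstacle'' paragraph is asserted, not built.

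\textbf{How to fix it within your framework.} If you insist on going through Bo\.zejko--Fendler, note that $\varphi(g)=\langle P(xg),Q(x)\rangle$ holds for \emph{every} $x\in G$; integrating over $x$ gives $\varphi(g)=\langle(\rho(g)\otimes 1)P,Q\rangle_{L^2(G)\otimes\mathcal H}$ with $\|P\|_2\le\|P\|_\infty$, $\|Q\|_2\le\|Q\|_\infty$. Since $\rho\otimes 1$ is a multiple of the regular representation, this already shows $\|\varphi\|_{A(G)}\le\|P\|_\infty\|Q\|_\infty=\|\varphi\|_{M_0A(G)}$, i.e.\ you have reproved $A(G)=M_0A(G)$ for compact $G$. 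From there the $\ell^1$-sum identity follows from the trace-class decomposition exactly as in the paper. The detour through $L^2(K)\otimes\mathcal H$ and $K$-isotypicals is unnecessary and, in the form you wrote it, does not work.
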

\begin{proof}
  Note that for a compact group $G$, we have $A(G)=M_0A(G)=MA(G)$. By definition of $A(G)$, there exist $\xi,\eta \in L^2(G)$ such that for all $x \in G$,
\[
  \varphi(x)=\langle \lambda(x)\xi,\eta \rangle,
\]
and $\|\varphi\|_{A(G)}=\|\xi\|\|\eta\|$. Note that since $G$ is compact, we have
\[
  L(G) \cong \oplus_{\pi \in \hat{G}} B(\mathcal{H}_{\pi})
\]
as an $l^{\infty}$ direct sum, and
\[
  A(G) \cong \oplus_{\pi \in \hat{G}} S_1(\mathcal{H}_{\pi})
\]
as an $l^{1}$ direct sum, where $S_1(\mathcal{H}_{\pi})$ denotes the space of trace class operators on $\mathcal{H}_{\pi}$. Hence, we can write
\[
  \varphi(x)=\sum_{\pi \in \hat{G}} \Tr(S_{\pi}\pi(x)), \quad x \in G,
\]
where $S_{\pi}$ is a trace class operator acting on $\mathcal{H}_{\pi}$, and it follows that
\[
  \|\varphi\|_{A(G)}=\sum_{\pi \in \hat{G}} \|S_{\pi}\|_1,
\]
where $\|.\|_1$ denotes the trace class norm.

Since $\varphi$ is $K$-bi-invariant, $S_{\pi}$ can be replaced by $P_{\pi}S_{\pi}P_{\pi}$, which vanishes whenever $\pi \notin \hat{G}_K$, and which equals $c_{\pi}P_{\pi}$ for some constant $c_{\pi}$ whenever $\pi \in \hat{G}_K$. We have $|c_{\pi}|=\|c_{\pi}P_{\pi}\|_1$, since the dimension of $P_{\pi}$ is one. Hence,
\[
  \varphi(x)=\sum_{\pi \in \hat{G}_K} c_{\pi} \Tr(P_{\pi}\pi(x)),
\]
and therefore,
\begin{equation} \nonumber
  \|\varphi\|_{A(G)} = \sum_{\pi \in \hat{G}_K} \|P_{\pi}S_{\pi}P_{\pi}\|_1 = \sum_{\pi \in \hat{G}_K} |c_{\pi}|.
\end{equation}
For each $\pi \in \hat{G}_K$, choose a unit vector $\xi_{\pi} \in P_{\pi}\mathcal{H}_{\pi}$. Then
\[
  \varphi(x)=\sum_{\pi \in \hat{G}_K} c_{\pi} h_{\pi}(x),
\]
where $h_{\pi}(x)=\langle \pi(x)\xi_{\pi},\xi_{\pi} \rangle$ is the positive definite spherical function associated with $(\pi,\mathcal{H}_{\pi},\xi_{\pi})$.
\end{proof}

\subsection{The Approximation Property}
Recall from Section \ref{sec:introduction} that a locally compact group $G$ has the Approximation Property (AP) if there is a net $(\varphi_{\alpha})$ in $A(G)$ such that $\varphi_{\alpha} \to 1$ in the $\sigma(M_0A(G),M_0A(G)_*)$-topology, where $M_0A(G)_*$ denotes the natural predual of $M_0A(G)$.

The natural predual can be described as follows \cite{decannierehaagerup}. Let $X$ denote the completion of $L^1(G)$ with respect to the norm given by
\[
  \|f\|_X=\sup\biggl\{ \bigg\vert \int_G f(x)\varphi(x)dx \bigg\vert \mid \varphi \in M_0A(G), \|\varphi\|_{M_0A(G)} \leq 1 \biggr\}.
\]
Then $X^{*}=M_0A(G)$. On bounded sets, the $\sigma(M_0A(G),M_0A(G)_{*})$-topology coincides with the $\sigma(L^{\infty}(G),L^1(G))$-topology.

The AP passes to closed subgroups, as is proved in \cite[Proposition 1.14]{haagerupkraus}. Also, as was mentioned in Section \ref{sec:introduction}, if $H$ is a closed normal subgroup of a locally compact group $G$ such that both $H$ and $G / H$ have the AP, then $G$ has the AP \cite[Theorem 1.15]{haagerupkraus}. A related result is the following proposition. First we recall some facts about groups.

For a group $G$ we denote its center by $Z(G)$ and (if $G$ is finite) we denote its order by $|G|$. Recall that the adjoint representation $\ad:\mathfrak{g} \lra \mathfrak{gl}(\mathfrak{g})$ of a Lie algebra $\mathfrak{g}$ is given by $\ad(X)(Y)=[X,Y]$. The image $\ad(\mathfrak{g})$ is a Lie subalgebra of $\mathfrak{gl}(\mathfrak{g})$. Let $\Ad(\mathfrak{g})$ denote the analytic subgroup of $\GL(\mathfrak{g})$ with Lie algebra $\ad(\mathfrak{g})$. The Lie group $\Ad(\mathfrak{g})$ is called the adjoint group. For a connected Lie group $G$ with Lie algebra $\mathfrak{g}$ we also write the adjoint group as $\Ad(G)$. Note that Lie groups with the same Lie algebra have isomorphic adjoint groups. The adjoint group of a connected Lie group $G$ is isomorphic to $G \slash Z(G)$. For more details, we refer the reader to \cite{helgasonlie}.
\begin{prp} \label{prp:locisoap}
	If $G_1$ and $G_2$ are two locally isomorphic connected simple Lie groups with finite center such that $G_1$ has the AP, then $G_2$ has the AP.
\end{prp}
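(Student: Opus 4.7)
The plan is to route through the common adjoint group. Since $G_1$ and $G_2$ are locally isomorphic, their Lie algebras are isomorphic, and hence (as recalled just before the proposition) the adjoint groups $H:=\Ad(G_1)\cong\Ad(G_2)$ are isomorphic as Lie groups. Using the identification $\Ad(G_i)\cong G_i/Z(G_i)$, both $G_1$ and $G_2$ sit in short exact sequences with quotient $H$ and finite kernel. The plan therefore splits the statement into two steps: (a) push the AP from $G_1$ down to $H$, and (b) lift the AP from $H$ back up to $G_2$.

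Step (b) is essentially free. The finite group $Z(G_2)$ is amenable, hence has the AP, so Theorem 1.15 of \cite{haagerupkraus} (cited in Section \ref{sec:introduction}) applied to the exact sequence $1\lra Z(G_2)\lra G_2\lra H\lra 1$ yields the AP for $G_2$.

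The real content lies in step (a), which I would handle via the more general fact that if $N$ is a finite (indeed, compact) normal subgroup of a locally compact group $G$ and $G$ has the AP, then $G/N$ has the AP. Given an AP net $(\varphi_\alpha)\subset A(G)$, I would first observe that the net is norm-bounded in $M_0A(G)$ by the uniform boundedness principle applied to the weak-$*$ convergent net in $M_0A(G)=(M_0A(G)_*)^{*}$. Then I would replace $\varphi_\alpha$ by its two-sided $N$-average
\[
\psi_\alpha(x)=\frac{1}{|N|^2}\sum_{n,n'\in N}\varphi_\alpha(nxn'),
\]
which is $N$-bi-invariant and for which the two averaging operations are contractive both in $A(G)$ and in $M_0A(G)$. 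By Eymard's isometric identifications $A(G/N)\hookrightarrow A(G)$ and $M_0A(G/N)\hookrightarrow M_0A(G)$ as the $N$-bi-invariant subalgebras, the $\psi_\alpha$ then descend to a uniformly $M_0A$-bounded net $\tilde\psi_\alpha\in A(G/N)$.

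The main obstacle is verifying that $\tilde\psi_\alpha\to 1$ in the $\sigma(M_0A(G/N),M_0A(G/N)_*)$-topology. Since the net is norm-bounded, the remark recalled in the excerpt that on bounded sets this topology coincides with $\sigma(L^\infty(G/N),L^1(G/N))$ reduces matters to testing against $f\in L^1(G/N)$. For such $f$, I would use the Weil integration formula to lift $f$ to an $N$-averaged function $\tilde f\in L^1(G)$ satisfying $\int_G (g\circ\pi)\,\tilde f\,dx=\int_{G/N}g\,f\,dh$ for any bounded $N$-invariant $g$, and then apply this to $g=\tilde\psi_\alpha$ and to $g=1$, using $\varphi_\alpha\to 1$ weak-$*$ tested against $\tilde f\in L^1(G)\subset M_0A(G)_*$. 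The delicate point along the way is the isometric identification of $M_0A$-norms under the quotient $G\to G/N$; I would establish this using the Bo\.zejko--Fendler Hilbert-space characterization \eqref{eq:cbmcharacterization} of completely bounded multipliers recalled in the introduction, together with the fact that any Hilbert-space realization of $\tilde\psi_\alpha$ on $G/N$ pulls back to one on $G$ of the same norm, and conversely that averaging a realization on $G$ over the compact group $N$ does not increase the norm.
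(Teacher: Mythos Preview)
Your overall architecture is exactly the paper's: pass to the common adjoint group $H\cong G_1/Z(G_1)\cong G_2/Z(G_2)$, push the AP down from $G_1$ to $H$ by averaging over the finite center, and then lift from $H$ to $G_2$ via the extension result of \cite{haagerupkraus}. Step (b) is fine.

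There is, however, a genuine error in step (a). You write that the approximating net is norm-bounded in $M_0A(G)$ ``by the uniform boundedness principle applied to the weak-$*$ convergent net''. This is false: Banach--Steinhaus gives boundedness for weak-$*$ convergent \emph{sequences}, not for nets, and an AP net is in general unbounded in $M_0A(G)$ --- if one could always take it bounded, the AP would collapse to weak amenability. Since your subsequent argument for weak-$*$ convergence on $G/N$ rests entirely on this boundedness (to reduce to the $\sigma(L^\infty,L^1)$-topology), the proof as written does not go through.

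The fix is the one the paper uses, and it avoids any boundedness hypothesis. Because $N=Z(G_1)$ is finite (and central), the averaged function $\psi_\alpha$ is a \emph{finite convex combination of translates} of $\varphi_\alpha$. Left (and right) translation is weak-$*$ continuous on $M_0A(G_1)$, so $\psi_\alpha\to 1$ in $\sigma(M_0A(G_1),M_0A(G_1)_*)$ directly. Equivalently --- and this is made explicit later in Lemma~\ref{lem:restrictiontoKbiinvariantfunctions} --- the averaging map is the adjoint $R^{*}$ of a contraction $R$ on $M_0A(G_1)_*$, hence is weak-$*$--weak-$*$ continuous, and $R^{*}1=1$. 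Combined with the isometric identification of $M_0A(G_1/Z(G_1))$ with the $Z(G_1)$-invariant part of $M_0A(G_1)$, this yields $\tilde\psi_\alpha\to 1$ in $\sigma(M_0A(G_1/Z(G_1)),M_0A(G_1/Z(G_1))_*)$ without ever invoking boundedness.
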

\begin{proof}
	Let $G_1$ and $G_2$ be two locally isomorphic connected simple Lie groups with finite center, and suppose that $G_1$ satisfies the AP. The two groups have the same Lie algebra and hence, their adjoint groups, which are isomorphic to $G_1 \slash Z(G_1)$ and $G_2 \slash Z(G_2)$, respectively, are also isomorphic.

Let $(\varphi_{\alpha}^1)$ be a net of functions in $A(G_1)$ converging to the constant function $1$ in the weak-* topology on $M_0A(G_1)$. Define
\[
	\tilde{\varphi}_{\alpha}^{1}(xZ(G_1)):=\frac{1}{|Z(G_1)|}\sum_{z \in Z(G_1)} \varphi_{\alpha}^1(xz).
\]
The summands are elements of the Fourier algebra of $G_1$, and $\tilde{\varphi}_{\alpha}^{1}$ is independent of the representative of the coset. By \cite[Proposition 3.25]{eymard}, the space $A(G_1\slash Z(G_1))$ can be identified isometrically with the subspace of $A(G_1)$ consisting of the elements of $A(G_1)$ that are constant on the cosets of $Z(G_1)$, and hence $\tilde{\varphi}_{\alpha}^{1}$ is in $A(G_1 \slash Z(G_1))$.

From the characterization of $A(G_1 \slash Z(G_1))$ we can also conclude that $\tilde{\varphi}_{\alpha}^{1} \to 1$ in the weak-* topology on $M_0A(G_1 \slash Z(G_1))$. The latter can also be identified with the subspace of $M_0A(G_1)$ consisting of the elements of $M_0A(G_1)$ that are constant on the cosets of $Z(G_1)$. Indeed, the approximating net consists of functions that are finite convex combinations of left translates of functions approximating $1$ in the weak-* topology on $M_0A(G_1)$.

Hence $G_1 \slash Z(G_1)$ has the AP, so $G_2 \slash Z(G_2)$ has it, as well. From the fact mentioned above, namely that whenever $H$ is a closed normal subgroup of a locally compact group $G$ such that both $H$ and $G / H$ have the AP, then $G$ has the AP, it follows that $G_2$ has the AP.
\end{proof}
\begin{lem} \label{lem:restrictiontoKbiinvariantfunctions}
	Let $G$ be a locally compact group with a compact subgroup $K$. If $G$ has the AP, then the net approximating the constant function $1$ in the weak-* topology on $M_0A(G)$ can be chosen to consist of $K$-bi-invariant functions.
\end{lem}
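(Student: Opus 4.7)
The plan is to average a given approximating net over $K \times K$ and check that the averaged net is again in $A(G)$, is $K$-bi-invariant, and still converges to $1$ in the weak-* topology on $M_0A(G)$. Concretely, starting from a net $(\varphi_\alpha) \subset A(G)$ with $\varphi_\alpha \to 1$ in $\sigma(M_0A(G), M_0A(G)_*)$, I would set
\[
   \varphi_\alpha^K(x) = \int_{K \times K} \varphi_\alpha(k_1 x k_2) \, dk_1 \, dk_2,
\]
so that $\varphi_\alpha^K$ is manifestly $K$-bi-invariant. The bulk of the argument is to show that the operator $T : \varphi \mapsto \varphi^K$ is a bounded, weak-* continuous contraction on $M_0A(G)$ that preserves $A(G)$; then $\varphi_\alpha^K = T\varphi_\alpha \to T(1) = 1$ in the weak-* topology, which finishes the proof.

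To see that $T$ preserves $A(G)$, I would view $(k_1,k_2) \mapsto L_{k_1}R_{k_2}\varphi$ as a continuous map from $K \times K$ into $A(G)$ (norm continuity of translations in the Fourier algebra), so that $\varphi_\alpha^K$ is defined by a Bochner integral in $A(G)$ and in particular lies there. To see that $T$ is a contraction on $M_0A(G)$, I would use the Bo\.zejko--Fendler characterisation \eqref{eq:cbmcharacterization}: given $\varphi(y^{-1}x) = \langle P(x), Q(y)\rangle_\mathcal{H}$ with $\|P\|_\infty\|Q\|_\infty = \|\varphi\|_{M_0A(G)}$, I would enlarge the Hilbert space to $\mathcal{H}' = \mathcal{H} \otimes L^2(K) \otimes L^2(K)$ and define $P'(x)(k_1,k_2) = P(xk_2)$ and $Q'(y)(k_1,k_2) = Q(yk_1^{-1})$. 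A direct calculation then gives
\[
   T\varphi(y^{-1}x) = \int_{K \times K} \langle P(xk_2), Q(yk_1^{-1}) \rangle_\mathcal{H} \, dk_1 \, dk_2 = \langle P'(x), Q'(y) \rangle_{\mathcal{H}'},
\]
and $\|P'\|_\infty \leq \|P\|_\infty$, $\|Q'\|_\infty \leq \|Q\|_\infty$ by Jensen, whence $\|T\varphi\|_{M_0A(G)} \leq \|\varphi\|_{M_0A(G)}$.

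The step that I expect will require the most care is the weak-* continuity of $T$. For this I would identify a pre-adjoint on $X = M_0A(G)_*$. For $f \in L^1(G) \subset X$, define
\[
   T_* f(x) = \int_{K \times K} f(k_1^{-1} x k_2^{-1}) \, dk_1 \, dk_2.
\]
Using Fubini, the translation invariance of Haar measure, and the fact that the modular function is trivial on the compact subgroup $K$, one checks that $\int_G T_* f(x) \varphi(x) \, dx = \int_G f(y) T\varphi(y) \, dy$ for all $\varphi \in M_0A(G)$. Combined with the contraction estimate from the previous step, this gives
\[
   \|T_* f\|_X = \sup_{\|\varphi\|_{M_0A(G)} \leq 1} \Bigl| \int_G f \cdot T\varphi \, dx \Bigr| \leq \|f\|_X,
\]
so $T_*$ extends to a contraction on $X$ whose adjoint agrees with $T$ on the dense subspace $L^1(G) \subset X$, hence is $T$ itself. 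Thus $T = (T_*)^*$ is weak-* continuous, and applying it to the approximating net produces the desired $K$-bi-invariant net in $A(G)$.
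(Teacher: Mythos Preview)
Your proof is correct and follows essentially the same approach as the paper: both average over $K\times K$, show the averaging operator is a contraction on $M_0A(G)$, exhibit its pre-adjoint on $L^1(G)\subset M_0A(G)_*$ via the duality formula $\langle \varphi^K,f\rangle=\langle \varphi,f^K\rangle$, extend by density, and conclude weak-* continuity. The only noteworthy difference is that the paper obtains the contraction estimate $\|\varphi^K\|_{M_0A(G)}\leq\|\varphi\|_{M_0A(G)}$ directly from translation invariance of the $M_0A(G)$-norm rather than via your Bo\.zejko--Fendler construction, which is a bit quicker but amounts to the same thing.
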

\begin{proof}
For $f \in C(G)$ or $f \in L^1(G)$ we put
\[
	f^K(g)=\int_K\int_K f(kgk^{\prime})dkdk^{\prime}, \quad g \in G,
\]
where $dk$ is the normalized Haar measure on $K$. Since the norm $\|.\|_{M_0A(G)}$ is invariant under left and right translation by elements of $K$, we have $\|\varphi^K\|_{M_0A(G)} \leq \|\varphi\|_{M_0A(G)}$ for all $\varphi \in M_0A(G)$. Moreover, for $\varphi \in M_0A(G)$ and $f \in L^1(G)$, we have
\[
  \langle \varphi^K,f \rangle=\langle \varphi,f^K \rangle,
\]
where $L^1(G)$ is considered as a dense subspace of $M_0A(G)$ and the bracket $\langle .,. \rangle$ denotes the duality bracket between $M_0A(G)$ and $M_0A(G)_{*}$. Hence, $\|f^K\|_{M_0A(G)_{*}} \leq \|f\|_{M_0A(G)_{*}}$ for all $f \in L^1(G)$. Therefore, the map on $L^1(G)$ defined by $f \mapsto f^K$ extends uniquely to a linear contraction $R$ on $M_0A(G)_{*}$, and $R^*\varphi=\varphi^K$ for all $\varphi \in M_0A(G)$, where $R^* \in \bo(M_0A(G))$ is the dual operator of $R$.

Assume now that $G$ has the AP. Then there exists a net $\varphi_{\alpha}$ in $A(G)$ such that $\varphi_{\alpha} \to 1$ in the $\sigma(M_0A(G),M_0A(G)_{*})$-topology. Hence, $\varphi_{\alpha}^K=R^*\varphi_{\alpha} \to R^*1=1$ in the $\sigma(M_0A(G),M_0A(G)_{*})$-topology. Moreover, $\varphi_{\alpha}^K \in A(G) \cap C(K \backslash G \slash K)$ for all $\alpha$. This proves the lemma.
\end{proof}
The following lemma will be used to conclude that a certain subspace of $M_0A(G)$ is $\sigma(M_0A(G),M_0A(G)_{*})$-closed.
\begin{lem} \label{lem:positivebound}
  Let $(X,\mu)$ be a $\sigma$-finite measure space, and let $v:X \lra \bbR$ be a strictly positive measurable function on $X$. Then the set
\[
  S:=\{ f \in L^{\infty}(X) \mid |f(x)| \leq v(x) \textrm{ a.e. }\}
\]
is $\sigma(L^{\infty}(X),L^1(X))$-closed.
\end{lem}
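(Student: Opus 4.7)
The plan is to realize $S$ as an intersection of weak-$\ast$ closed half-spaces. Since $v$ itself may fail to lie in $L^1(X)$, the only weak-$\ast$ continuous linear functionals $f \mapsto \int h f \, d\mu$ available are those with $h \in L^1(X)$, so I will use $\sigma$-finiteness to truncate $v$ against such test functions $h$.

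Concretely, I would choose an increasing sequence $X_n \uparrow X$ with $\mu(X_n) < \infty$, set $X_n^v := X_n \cap \{v \leq n\}$ (so that $\bigcup_n X_n^v = X$, since $v$ is real-valued), and for each measurable $A \subseteq X_n^v$ and each measurable $h\colon X \to \bbC$ with $|h| \leq 1$ and $h = 0$ off $A$, define
\[
	H_{A,h} := \biggl\{ f \in L^{\infty}(X) \biggm\vert \Re \int_X h f \, d\mu \leq \int_A v \, d\mu \biggr\}.
\]
Because $h \in L^1(X)$ and $v \mathbf{1}_A \in L^1(X)$, the set $H_{A,h}$ is the preimage of a closed half-line under a weak-$\ast$ continuous linear functional, hence $\sigma(L^{\infty}(X), L^1(X))$-closed.

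I then claim $S = \bigcap_{(A,h)} H_{A,h}$, the intersection running over all admissible pairs. The inclusion $S \subseteq \bigcap H_{A,h}$ is immediate from $\Re(hf) \leq |h|\,|f| \leq v\mathbf{1}_A$ a.e. For the reverse, I would contrapose: if $f \notin S$, then $\mu\{|f| > v\} > 0$, and since $\{|f| > v\} = \bigcup_n \bigl(X_n^v \cap \{|f| \geq v + 1/n\}\bigr)$, some $A := X_n^v \cap \{|f| \geq v + 1/n\}$ has positive (and finite) measure. Taking $h := \overline{\sgn(f)}\mathbf{1}_A$ gives
\[
	\Re \int_X h f \, d\mu = \int_A |f| \, d\mu \geq \int_A v \, d\mu + \mu(A)/n > \int_A v \, d\mu,
\]
so $f \notin H_{A,h}$. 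Being an intersection of weak-$\ast$ closed sets, $S$ is then weak-$\ast$ closed.

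The only delicate ingredient is producing the witness pair $(A,h)$ against $f \notin S$; the truncation $\{v \leq n\}$ together with $\sigma$-finiteness is exactly what ensures both that $v\mathbf{1}_A$ is integrable and that a strict gap $1/n$ between $|f|$ and $v$ can be extracted on a set of positive measure. I do not expect any substantive obstacle beyond this.
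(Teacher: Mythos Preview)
Your proof is correct. Both your argument and the paper's rest on the same core idea---producing an $L^1$ test function that detects the failure of $|f|\le v$ on a set of positive finite measure---but the packaging differs. The paper argues directly with nets: assuming $f_\alpha\to f$ weak-$\ast$ with $f_\alpha\in S$ and $f\notin S$, it builds the test function $g=\frac{1}{\mu(F_n)}\mathbf{1}_{F_n}\frac{1}{v}\,\overline{\sgn f}$ on a finite-measure set $F_n\subset\{|f|>(1+1/n)v\}\cap\{v\ge 1/k_n\}$, so that $\Re\int f_\alpha g\le 1$ forces $\int|fg|\le 1$, contradicting $\int_{F_n}|f|/v\ge 1+1/n$. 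You instead express $S$ as an intersection of weak-$\ast$ closed half-spaces and use the simpler test function $h=\overline{\sgn f}\,\mathbf{1}_A$, compensating by truncating $v$ from above ($v\le n$) rather than from below; this avoids dividing by $v$ and makes the half-space description completely explicit. The two are equivalent in substance, and $\sigma$-finiteness enters at exactly the same point in both.
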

\begin{proof}
  Let $(f_{\alpha})$ be a net in $S$ converging to $f \in L^{\infty}(X)$ in the $\sigma(L^{\infty}(X),L^1(X))$-topology. Define $E_n=\biggl\{ x \in X \biggm\vert |f(x)| > \left(1+\frac{1}{n}\right)v(x) \biggr\}$. We will prove that $\mu(E_n)=0$ for all $n \in \mathbb{N}$. Suppose that for some $n \in \mathbb{N}$ we have $\mu(E_n) > 0$. Put $E_{n,k}=\{x \in E_n \mid v(x) \geq \frac{1}{k}\}$. Then $E_{n,k} \nearrow E_n$ for $k \to \infty$. In particular, $\mu(E_{n,k_n}) > 0$ for some $k_n \in \mathbb{N}$. By $\sigma$-finiteness of $\mu$, we can choose $F_n \subset E_{n,k_n}$ such that $0 < \mu(F_n) < \infty$. Note that $F_n \subset E_n$ and $v(x) \geq \frac{1}{k_n}$ for all $x \in F_n$. Define the measurable function $g:X \lra \bbC$ by
\[
  g(x)=\frac{1}{\mu(F_n)}\mathbf{1}_{F_n}(x)\frac{1}{v(x)}\frac{\overline{f(x)}}{|f(x)|}, \quad x \in X.
\]
Then $g \in L^1(X)$. It follows that $\Re \left( \int_X f_{\alpha}g d\mu \right) \leq 1$, since $|f_{\alpha}(x)g(x)| \leq 1$ a.e.~on $F_n$. Hence, $\Re \left( \int_X fg d\mu \right) \leq 1$. Since this integral is real and $fg \geq 0$, it follows that $\int_X |fg|d\mu \leq 1$. On the other hand,
\[
  \int_X |fg|d\mu=\frac{1}{\mu(F_n)} \int_{F_n} \frac{|f(x)|}{v(x)}d\mu(x) \geq 1+\frac{1}{n}.
\]
This gives a contradiction, so $\mu(E_n)=0$ for all $n \in \mathbb{N}$. This implies that the set $E=\cup_{n=1}^{\infty}E_n=\{ x \in X \mid |f(x)| > v(x) \}$ has measure $0$, so $|f(x)| \leq v(x)$ a.e..
\end{proof}
Let $G$ be a locally compact group with compact subgroup $K$. Because left and right translations of a function $\varphi \in M_0A(G)$ are continuous with respect to the $\sigma(M_0A(G),M_0A(G)_{*})$-topology, the space $M_0A(G) \cap C(K \backslash G \slash K)$ consisting of $K$-bi-invariant completely bounded Fourier multipliers is $\sigma(M_0A(G),M_0A(G)_{*})$-closed. Together with Lemma \ref{lem:positivebound} and the fact that $L^1(G) \subset M_0A(G)$, this implies the following.
\begin{lem} \label{lem:kbiinvariantboundedmultipliersclosed}
  Let $G$ be a locally compact group with a compact subgroup $K$, and let $v:G \lra \bbR$ be a strictly positive measurable function. Define
\[
  S_v(G)=\{ f \in L^{\infty}(G) \mid |f(x)| \leq v(x) \textrm{ a.e. }\}.
\]
Then the space $M_0A(G) \cap S_v(G) \cap C(K \backslash G \slash K)$ is $\sigma(M_0A(G),M_0A(G)_{*})$-closed.
\end{lem}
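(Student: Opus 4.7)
The strategy is to exhibit $M_0A(G) \cap S_v(G) \cap C(K \backslash G \slash K)$ as the intersection of two $\sigma(M_0A(G),M_0A(G)_*)$-closed subsets of $M_0A(G)$ and then invoke the fact that an intersection of weak-* closed sets is weak-* closed. The first subset is $M_0A(G) \cap C(K \backslash G \slash K)$, which is weak-* closed by the observation made in the paragraph preceding the lemma: for each $k \in K$, left and right translation by $k$ are weak-* continuous operators on $M_0A(G)$, so the space of $K$-bi-invariant elements is the intersection, over $k \in K$, of the fixed-point sets of these continuous maps, hence weak-* closed.

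The second subset is $M_0A(G) \cap S_v(G)$. To see that this is weak-* closed, I would take a net $(\varphi_\alpha)$ in it that converges weak-* to some $\varphi \in M_0A(G)$. Since $L^1(G)$ embeds continuously as a dense subspace of $M_0A(G)_*$ (as recalled at the start of this subsection in the description of the predual), every functional of the form $\psi \mapsto \int_G \psi(x) f(x)\, dx$ with $f \in L^1(G)$ is weak-* continuous on $M_0A(G)$. Consequently $\varphi_\alpha \to \varphi$ in the $\sigma(L^\infty(G),L^1(G))$-topology, and Lemma \ref{lem:positivebound} then forces $\varphi \in S_v(G)$.

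Intersecting these two weak-* closed sets yields the statement. The only substantive ingredient is Lemma \ref{lem:positivebound}; the remaining steps are essentially formal, so I do not anticipate a real obstacle. The only conceptual point that deserves attention is that weak-* convergence in $M_0A(G)$ automatically implies $\sigma(L^\infty(G),L^1(G))$-convergence, so no appeal to Krein--\v{S}mulian or to boundedness of the net is needed.
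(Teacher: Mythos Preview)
Your proposal is correct and follows essentially the same argument as the paper. The paper's proof (given in the paragraph immediately preceding the lemma) also splits the intersection into the weak-* closed set $M_0A(G) \cap C(K \backslash G \slash K)$ (closed by weak-* continuity of translations) and uses the inclusion $L^1(G) \subset M_0A(G)_*$ together with Lemma~\ref{lem:positivebound} to conclude that $M_0A(G) \cap S_v(G)$ is weak-* closed, exactly as you do.
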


\section{The group $\Sp(2,\bbR)$ does not have the Approximation Property} \label{sec:sp2}
In this section, let $G=\Sp(2,\bbR)$, and let $K$, $A$ and $\overline{A^{+}}$ be as described in Example \ref{exm:symplecticgroup}. The fact that $G$ does not have the AP follows from the behaviour of completely bounded Fourier multipliers that are bi-invariant with respect to the maximal compact subgroup of $\Sp(2,\bbR)$. Note that the elements of the Fourier algebra, i.e., the possible approximating functions, are themselves completely bounded Fourier multipliers. Moreover, they vanish at infinity. We identify two compact Gelfand pairs sitting inside $\Sp(2,\bbR)$, and relate the values of bi-invariant completely bounded Fourier multipliers to the values of certain different multipliers on these compact Gelfand pairs. The spherical functions of these Gelfand pairs satisfy certain H\"older continuity conditions, which give rise to the key idea of the proof: an explicit description of the asymptotic behaviour of completely bounded Fourier multipliers that are bi-invariant with respect to the maximal compact 
subgroup. In the proof of Lafforgue and de la Salle for the case $\SL(3,\bbR)$, such an estimate is also one of the important ideas.
\begin{thm} \label{thm:sp2notap}
	The group $G=\Sp(2,\bbR)$ does not have the AP.
\end{thm}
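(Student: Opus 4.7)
The plan is to argue by contradiction. Suppose $G = \Sp(2,\bbR)$ has the AP. By Lemma \ref{lem:restrictiontoKbiinvariantfunctions} we may fix a net $(\varphi_j)$ in $A(G) \cap C(K \backslash G \slash K)$ converging to the constant function $1$ in the $\sigma(M_0A(G), M_0A(G)_{*})$-topology, and by uniform boundedness the norms $\|\varphi_j\|_{M_0A(G)}$ are bounded by some constant $M$. The $KAK$ decomposition reduces the analysis of each such $\varphi_j$ to its restriction to the two-parameter family $D(\alpha_1,\alpha_2)$ with $\alpha_1 \geq \alpha_2 \geq 0$, and since $\varphi_j \in A(G) \subset C_0(G)$, each $\varphi_j$ vanishes at infinity on $\overline{A^{+}}$.

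The core of the argument is to identify two compact Gelfand pairs sitting naturally inside $G$: the first built from the structure $K \cong \U(2)$ (with Gelfand subgroup $\U(1) \times \U(1)$), and a second compact Gelfand pair obtained as a ``shifted'' copy of $K$ inside $G$ produced by conjugating by a large element of $A$. For each such pair, Proposition \ref{prp:cbfmcgp} yields an absolutely convergent spherical expansion $\varphi = \sum_{\pi} c_{\pi} h_{\pi}$ with $\sum_{\pi} |c_{\pi}| \leq \|\varphi\|_{M_0A(G)}$. The key analytic input is that the spherical functions $h_{\pi}$ of these pairs (essentially Jacobi-type polynomials on the relevant compact homogeneous spaces) obey explicit uniform H\"older continuity bounds with a fixed positive exponent, so that the spherical expansion converts the multiplier norm into H\"older-type control on $\varphi$ itself along the compact slices.

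The crucial step is to transport this compact H\"older control into the non-compact asymptotic direction of $\overline{A^{+}}$. By conjugating the Gelfand pair structure by $D(\alpha_1,\alpha_2)$ and carefully tracking the resulting configurations of points modulo $KAK$, one should derive an inequality of the form
\[
  \bigl| \varphi(D(\alpha_1,\alpha_2)) - \Phi(\varphi) \bigr| \leq \varepsilon(\alpha_1,\alpha_2)\, \|\varphi\|_{M_0A(G)}
\]
for every $\varphi \in M_0A(G) \cap C(K \backslash G \slash K)$, where $\Phi$ is a specific linear functional and $\varepsilon(\alpha_1,\alpha_2) \to 0$ along a suitable sequence $(\alpha_1,\alpha_2) \to \infty$. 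By Lemma \ref{lem:kbiinvariantboundedmultipliersclosed}, such a pointwise bound is preserved under $\sigma(M_0A(G), M_0A(G)_{*})$-limits, and the construction ensures that $\Phi$ is itself weak-* continuous on bounded sets. Applying these properties to the net $(\varphi_j)$ forces $\Phi(\varphi_j) = 0$ because $\varphi_j$ vanishes at infinity, while the weak-* limit $\varphi_j \to 1$ yields $\Phi(1) = \lim_j \Phi(\varphi_j) = 0$. On the other hand, direct evaluation shows $\Phi(1) = 1$, and this is the desired contradiction.

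The main obstacle is the construction of the two compact Gelfand pairs together with the explicit H\"older estimate and its upgrade to the asymptotic regime: the Gelfand subgroups have to be chosen so that their spherical expansions decay sharply enough in $\pi$ to absorb the dilation produced by conjugating with $D(\alpha_1,\alpha_2)$ for large $\alpha_1,\alpha_2$, and simultaneously so that the resulting functional $\Phi$ takes the value $1$ on the constant function $1$ while vanishing on $C_0(G)$. This requires a careful analysis of how the parabolic geometry of $\Sp(2,\bbR)$ --- in particular the embedded copies of $\SL(2,\bbR)$ and the $\U(2)$ maximal compact --- interacts with the spherical function theory of the chosen compact Gelfand pairs.
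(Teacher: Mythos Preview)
Your outline is in the right spirit---two compact Gelfand pairs inside $G$, H\"older estimates on their spherical functions, and a resulting asymptotic bound on $K$-bi-invariant multipliers---but there are two concrete gaps.

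First, the claim that ``by uniform boundedness the norms $\|\varphi_j\|_{M_0A(G)}$ are bounded'' is false for nets. The Banach--Steinhaus theorem gives boundedness of weak-$*$ convergent \emph{sequences}, but the AP is defined via nets, and a weak-$*$ convergent net in a dual Banach space need not be norm-bounded. Your subsequent argument (using that the asymptotic estimate is stable under weak-$*$ limits on bounded sets, and that $\Phi$ is weak-$*$ continuous on bounded sets) collapses without this. The paper handles exactly this point differently: once the asymptotic estimate of Proposition~\ref{prp:sp2ab} is in hand, it applies Lemma~\ref{lem:kbiinvariantboundedmultipliersclosed} to see that the \emph{unit ball} of $M_0A(G)\cap C_0(K\backslash G/K)$ is $\sigma(M_0A(G),M_0A(G)_*)$-closed, and then invokes the Krein--Smulian theorem to conclude that the whole subspace is weak-$*$ closed. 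This bypasses any need for boundedness of the approximating net, and is the correct substitute for your contradiction via $\Phi$.

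Second, your identification of the first Gelfand pair is off: the relevant pair is $(\U(2),\U(1))$ with $\U(1)$ embedded as $\diag(1,\U(1))$, not $\U(1)\times\U(1)$; the second pair used in the paper is $(\SU(2),\SO(2))$. The point of these specific choices is that the corresponding $K_1$ and $K_3$ commute with one-parameter subgroups $D_\alpha$ and $D_\alpha'$ of $A$, so that $k\mapsto\varphi(D_\alpha k D_\alpha)$ and $k\mapsto\varphi(D_\alpha' k v D_\alpha')$ become $K_1$- resp.\ $K_3$-bi-invariant multipliers on $K$ with norm at most $\|\varphi\|_{M_0A(G)}$ (Lemmas~\ref{lem:fromGtoK} and~\ref{lem:chi}). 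The H\"older estimates then come from explicit bounds on Jacobi and Legendre polynomials (Theorem~\ref{thm:hs}, Lemma~\ref{lem:lpestimates}), and the ``transport to the asymptotic regime'' is a rather delicate sequence of comparisons along curves $\sinh^2\alpha_1+\sinh^2\alpha_2=\mathrm{const}$ and $\sinh\alpha_1\sinh\alpha_2=\mathrm{const}$ in $\overline{A^+}$ (Lemmas~\ref{lem:betagammacir}--\ref{lem:limit}). Your proposal gestures at this but does not supply it; since this is where essentially all the work lies, the proposal as written is an outline rather than a proof.
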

The elements of $M_0A(G) \cap C(K \backslash G \slash K)$ are constant on the double cosets of $K$ in $G$, so in order to describe their asymptotic behaviour we only need to consider their restriction to $\overline{A^{+}}$. Note that by Example \ref{exm:symplecticgroup} a general element of $\overline{A^{+}}$ can be written as $D(\alpha_1,\alpha_2)=\diag(e^{\alpha_1},e^{\alpha_2},e^{-\alpha_1},e^{-\alpha_2})$, where $\alpha_1 \geq \alpha_2 \geq 0$.
\begin{prp} \label{prp:sp2ab}
  There exist constants $C_1,C_2 > 0$ such that for all $K$-bi-invariant completely bounded Fourier multipliers $\varphi:G \lra \bbC$, the limit $\lim_{g \to \infty} \varphi(g)=\varphi_{\infty}$ exists and for all $\alpha_1 \geq \alpha_2 \geq 0$ we have
\begin{equation} \label{eq:sp2ab}
  |\varphi(D(\alpha_1,\alpha_2))-\varphi_{\infty}| \leq C_1e^{-C_2\|\alpha\|_2}\|\varphi\|_{M_0A(G)},
\end{equation}
where $\|\alpha\|_2=\sqrt{\alpha_1^2+\alpha_2^2}$.
\end{prp}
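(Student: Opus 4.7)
The plan is to use the decomposition of $K$-bi-invariant cb Fourier multipliers on compact Gelfand pairs given by Proposition \ref{prp:cbfmcgp}, applied to two compact Gelfand pairs $(L_1, H_1)$ and $(L_2, H_2)$ embedded in $G = \Sp(2,\bbR)$. The two pairs should be chosen so that together they capture the variation of $\varphi$ along the two independent directions of $\overline{A^+}$, namely the walls $\alpha_2 = 0$ and $\alpha_1 = \alpha_2$. A natural first attempt is to take each $L_i$ to sit inside $K \cong U(2)$, with $H_i$ the stabilizer of a suitable flag, so that $H_i$-bi-invariance corresponds to the constraint imposed on $\varphi$ by $K$-bi-invariance on certain slices.

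For a fixed $g = D(\alpha_1,\alpha_2)$, I would construct from $\varphi$ an $H_i$-bi-invariant function $\psi_g^{(i)}$ on $L_i$ with $\|\psi_g^{(i)}\|_{M_0A(L_i)} \leq \|\varphi\|_{M_0A(G)}$. The construction would plug specific products involving $g$ and elements of $L_i$ into the Bo\.zejko--Fendler representation \eqref{eq:cbmcharacterization} of $\varphi$, exploiting $K$-bi-invariance to pass to $H_i$-bi-invariance. Applying Proposition \ref{prp:cbfmcgp}, one obtains a decomposition $\psi_g^{(i)} = \sum_{\pi \in \widehat{L_i}_{H_i}} c_\pi^{(i)}(g) \, h_\pi^{(i)}$ with $\sum_\pi |c_\pi^{(i)}(g)| \leq \|\varphi\|_{M_0A(G)}$. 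The crucial geometric observation is that if $l \in L_i$ is chosen so that $g l$ (or $l g l^{\prime}$) rewritten via the $KAK$ decomposition lies in a much larger chamber element, then evaluating $\psi_g^{(i)}$ at such an $l$ recovers an asymptotic value of $\varphi$, while evaluating at the identity of $L_i$ recovers $\varphi(g)$. Since the matrix entries of $D(\alpha_1,\alpha_2)$ contain exponentials $e^{-\alpha_i}$, the relevant $l$ lies in a neighbourhood of $H_i$ of radius of order $e^{-\alpha_i}$, and the whole decay argument rests on this exponential contraction.

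The estimate \eqref{eq:sp2ab} should then follow by bounding
\[
|\psi_g^{(i)}(e) - \psi_g^{(i)}(l)| \leq \sum_{\pi \in \widehat{L_i}_{H_i}} |c_\pi^{(i)}(g)| \cdot |h_\pi^{(i)}(e) - h_\pi^{(i)}(l)|,
\]
and invoking a uniform Hölder-type estimate for the spherical functions of each compact Gelfand pair evaluated near $e$. The two Gelfand pairs, each responsible for one of the two exponential factors $e^{-C_2\alpha_1}$ and $e^{-C_2\alpha_2}$, combine to give $e^{-C_2 \|\alpha\|_2}$ up to adjusting constants. Existence of $\varphi_\infty$ is then a direct consequence: the estimate shows that $\alpha \mapsto \varphi(D(\alpha_1,\alpha_2))$ is Cauchy as $\|\alpha\|_2 \to \infty$ uniformly over directions in $\overline{A^+}$, so $\varphi_\infty$ exists, and \eqref{eq:sp2ab} follows by letting one of the two arguments go to infinity.

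The main obstacle is organising the summation so that the Hölder-type bound on $|h_\pi^{(i)}(e) - h_\pi^{(i)}(l)|$, which in general carries a constant that depends on the representation $\pi$, remains summable against the $\ell^1$ sequence $(|c_\pi^{(i)}(g)|)$ and yields a clean exponential decay. Overcoming this requires either a uniform-in-$\pi$ Hölder estimate special to the chosen Gelfand pairs (expressible in terms of classical special functions such as Jacobi polynomials) or a two-step argument that first exploits the trivial bound $|h_\pi^{(i)}| \leq 1$ on high-frequency contributions and then uses smoothness for the low-frequency part. A secondary subtlety is the uniformity in direction across $\overline{A^+}$, which must be patched from the estimates coming from the two different Gelfand pairs.
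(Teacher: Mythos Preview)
Your high-level framework matches the paper's: two compact Gelfand pairs sitting inside $K\cong\U(2)$, namely $(\U(2),\U(1))$ and $(\SU(2),\SO(2))$; transfer of $\varphi$ to bi-invariant cb multipliers on these via maps of the form $k\mapsto D_\alpha k D_\alpha$ and $k\mapsto D'_\alpha kv D'_\alpha$; the decomposition of Proposition~\ref{prp:cbfmcgp}; and a uniform-in-$\pi$ H\"older estimate for the spherical functions (Jacobi polynomials for the first pair, Legendre polynomials for the second). You have also correctly located the main analytic obstacle.

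However, the way you organise the argument has two genuine gaps. First, the claim that ``evaluating $\psi_g^{(i)}$ at the identity recovers $\varphi(g)$'' is false: with $\psi_\alpha(k)=\varphi(D_\alpha k D_\alpha)$ one has $\psi_\alpha(e)=\varphi(D_\alpha^2)$, not $\varphi(D(\alpha_1,\alpha_2))$ for a general chamber point. The correct mechanism is that for fixed $\alpha$ the assignment $u\mapsto(\beta,\gamma)$ determined by $D_\alpha u D_\alpha\in KD(\beta,\gamma)K$ traces a curve in $\overline{A^+}$; one chooses \emph{two} points $u_1,u_2$ on the compact group (neither the identity) whose images are the two chamber points to be compared, and the H\"older bound on $\psi_\alpha$ at $u_1,u_2$ yields $|\varphi(D(\beta_1,\gamma_1))-\varphi(D(\beta_2,\gamma_2))|$. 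Working out which $(\beta,\gamma)$ are reachable from a given $\alpha$ is a nontrivial computation (Lemmas~\ref{lem:kakeqs}, \ref{lem:hyperbolaseqs}, \ref{lem:circleseqs} in the paper).

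Second, the two pairs do not each deliver decay in one coordinate $\alpha_i$. One pair moves along level sets of $\sinh\beta\sinh\gamma$ and gives decay $e^{-c\gamma}$; the other moves along level sets of $\sinh^2\beta+\sinh^2\gamma$ and gives decay $e^{-c(\beta-\gamma)}$. Neither alone controls $\|\alpha\|_2$. The paper uses both to compare an arbitrary $D(\beta,\gamma)$ with a point $D(2s,s)$ on the reference ray $\alpha_1=2\alpha_2$, and then a separate Cauchy argument along that ray---which itself iterates both estimates (Lemmas~\ref{lem:comparest} and \ref{lem:limit})---produces $\varphi_\infty$. Your final paragraph treats the existence of $\varphi_\infty$ as a direct consequence of a single estimate, but in fact it requires this extra bootstrapping step; without it the argument is incomplete.
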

Let us first state an interesting corollary of Proposition \ref{prp:sp2ab}.
\begin{cor}
  Every $K$-bi-invariant completely bounded Fourier multiplier can be written as the sum of a $K$-bi-invariant completely bounded Fourier multiplier vanishing at infinity and an element of $\bbC$. More precisely, if $\varphi$ is a $K$-bi-invariant completely bounded Fourier multiplier on $G$, then $\varphi=\varphi_0 + \varphi_{\infty}$, where $\varphi_0 \in M_0A(G) \cap C_0(K \backslash G \slash K)$ and $\varphi_{\infty}=\lim_{g \to \infty} \varphi(g) \in \bbC$.
\end{cor}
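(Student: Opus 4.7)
My plan is to derive this corollary as a direct consequence of Proposition \ref{prp:sp2ab}. That proposition already supplies the existence of $\varphi_\infty := \lim_{g \to \infty} \varphi(g)$ together with a quantitative rate of convergence, so the natural candidate decomposition is $\varphi = \varphi_0 + \varphi_\infty$ with $\varphi_0 := \varphi - \varphi_\infty$. The entire task is then to verify that this $\varphi_0$ lies in $M_0A(G) \cap C_0(K \backslash G \slash K)$.

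First, I would observe that the constant function $\varphi_\infty$ belongs to $M_0A(G)$: via the Bo\.zejko--Fendler characterization \eqref{eq:cbmcharacterization}, the choice $P(x) = \varphi_\infty e$ and $Q(y) = e$ for any unit vector $e$ in a Hilbert space exhibits any constant as a completely bounded multiplier, with $\|\varphi_\infty\|_{M_0A(G)} = |\varphi_\infty|$. By linearity of $M_0A(G)$ it follows that $\varphi_0 \in M_0A(G)$. Since both $\varphi$ and the constant $\varphi_\infty$ are $K$-bi-invariant, so is $\varphi_0$, and continuity of $\varphi_0$ is automatic from the continuity of completely bounded Fourier multipliers.

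To finish, I need to see that $\varphi_0$ vanishes at infinity, and for this I would combine $K$-bi-invariance with the $KAK$ decomposition recalled in Example \ref{exm:symplecticgroup}. Any $g \in G$ writes as $g = k_1 D(\alpha_1,\alpha_2) k_2$ with $\alpha_1 \geq \alpha_2 \geq 0$, so by Proposition \ref{prp:sp2ab}
\[
  |\varphi_0(g)| = |\varphi(D(\alpha_1,\alpha_2)) - \varphi_\infty| \leq C_1 e^{-C_2 \|\alpha\|_2} \|\varphi\|_{M_0A(G)}.
\]
Compactness of $K$ ensures that $g \to \infty$ in $G$ forces $\alpha_1 \to \infty$, hence $\|\alpha\|_2 \to \infty$ and $\varphi_0(g) \to 0$. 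Thus $\varphi_0 \in C_0(K \backslash G \slash K)$, and uniqueness of the decomposition is forced by the fact that a constant lying in $C_0(G)$ must be zero.

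There is no genuine obstacle to overcome here: the proposition already packages both the existence of $\varphi_\infty$ and the quantitative decay required to upgrade to membership in $C_0$, and the rest is bookkeeping. The only mild point worth being a little careful about is that, although the $KAK$ decomposition is not unique in the $K$-factors, it is unique in the $\overline{A^{+}}$-factor, which is exactly what is needed to read the pointwise bound off the proposition unambiguously.
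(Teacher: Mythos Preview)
Your proposal is correct and matches the paper's treatment: the paper states this corollary without proof, regarding it as an immediate consequence of Proposition~\ref{prp:sp2ab}, and your argument is precisely the direct derivation the reader is expected to supply. The only thing one might add is that the paper implicitly relies on the same observations you spell out (constants lie in $M_0A(G)$, and $g\to\infty$ in $K\overline{A^{+}}K$ forces $\|\alpha\|_2\to\infty$), so there is no daylight between the approaches.
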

\begin{proof}[Proof of Theorem \ref{thm:sp2notap} using Proposition \ref{prp:sp2ab}.]
Recall that the elements of $A(G)$ vanish at infinity. By Lemma \ref{lem:kbiinvariantboundedmultipliersclosed}, it follows that the unit ball of the space $M_0A(G) \cap C_0(K \backslash G \slash K)$, which by Proposition \ref{prp:sp2ab} satisfies the asymptotic behaviour of \eqref{eq:sp2ab} (with $\varphi_{\infty}=0$ and $\|\varphi\|_{M_0A(G)} \leq 1$), is closed in the $\sigma(M_0A(G),M_0A(G)_{*})$-topology. Recall the Krein-Smulian Theorem, asserting that whenever $X$ is a Banach space and $A$ is a convex subset of the dual space $X^{*}$ such that $A \cap \{x^{*} \in X^{*} \mid \|x^{*}\| \leq r\}$ is weak-* closed for every $r>0$, then $A$ is weak-* closed 
\cite[Theorem V.12.1]{conway}. In the case where $A$ is a vector space, which is the case here, it suffices to check the case $r=1$, i.e., the weak-* closedness of the unit ball. It follows that the space $M_0A(G) \cap C_0(K \backslash G \slash K)$ is weak-* closed. Since $A(G) \cap C(K \backslash G \slash K) \subset M_0A(G) \cap C_0(K \backslash G \slash K)$, it follows that the constant function $1$ is not contained in the $\sigma(M_0A(G),M_0A(G)_{*})$-closure of $A(G) \cap C(K \backslash G \slash K)$. Hence, by Lemma \ref{lem:restrictiontoKbiinvariantfunctions}, $\Sp(2,\bbR)$ does not have the AP.
\end{proof}
The proof of Proposition \ref{prp:sp2ab} will be given after proving some preliminary results. First we identify two Gelfand pairs sitting inside $G$. We describe them, the way they are embedded into $G$, and their spherical functions, and we characterize the completely bounded Fourier multipliers on them that are bi-invariant with respect to the corresponding Gelfand subgroup.

Consider the group $\U(2)$, which contains the circle group $\U(1)$ as a subgroup via the embedding
\[
  \U(1) \hookrightarrow \left( \begin{array}{cc} 1 &  0 \\ 0 & \U(1) \end{array} \right) \subset \U(2).
\]
Under the identification $K \cong \U(2)$, the embedded copy of $\U(1)$ has the following form:
\[
  \U(1) \cong K_1 = \left\{ \left( \begin{array}{cccc} 1 & 0 & 0 & 0 \\ 0 & \cos{\theta} & 0 & -\sin{\theta} \\ 0 & 0 & 1 & 0 \\ 0 & \sin{\theta} & 0 & \cos{\theta} \end{array} \right) \Biggm\vert \theta \in [0,2\pi) \right\},
\]
which can be interpreted as the group of rotations in the plane parametrized by the second and the fourth coordinate. The group $K_1$ commutes with the group generated by the elements $D_{\alpha}=\diag(e^{\alpha},1,e^{-\alpha},1)$, where $\alpha \in \bbR$. This group is a subgroup of $A \subset G$, where $A$ is as in Example \ref{exm:symplecticgroup}.

It goes back to Weyl \cite{weyl} that $(\U(2),\U(1))$ is a Gelfand pair (see, e.g., \cite[Theorem IX.9.14]{knapp}). The homogeneous space $\U(2) \slash \U(1)$ is homeomorphic to the complex $1$-sphere $S_{\bbC}^1 \subset \bbC^2$ and the space $\U(1) \backslash \U(2) \slash \U(1)$ of double cosets is homeomorphic to the closed unit disc $\overline{\mathbb{D}} \subset \bbC$ by the map
\[
  K_1\left( \begin{array}{cc} u_{11} & u_{12} \\ u_{21} & u_{22} \end{array} \right)K_1 \mapsto u_{11}.
\]
The spherical functions for $(\U(2),\U(1))$ can be found in \cite{koornwinder}. By the homeomorphism $\U(1) \backslash \U(2) \slash \U(1) \cong \overline{\mathbb{D}}$, they are functions of one complex variable in the closed unit disc. They are indexed by the integers $p,q \geq 0$ and explicitly given by
\[
  h_{p,q} \left( \begin{array}{cc} u_{11} & u_{12} \\ u_{21} & u_{22} \end{array} \right)=h_{p,q}^0(u_{11}),
\]
where in the point $z \in \overline{\mathbb{D}}$ the function $h_{p,q}^0$ is explicitly given by
\[
  h_{p,q}^0(z)=\left\{ \begin{array}{ll} z^{p-q} P_q^{(0,p-q)}(2|z|^2-1) & \qquad p \geq q, \\ \overline{z}^{q-p} P_p^{(0,q-p)}(2|z|^2-1) & \qquad p < q. \end{array} \right.
\]
Here $P_n^{(\alpha,\beta)}$ denotes the $n^{\textrm{th}}$ Jacobi polynomial. The following is a special case of a result obtained by the first author and Schlichtkrull \cite{hsjacobi}.
\begin{thm} \label{thm:hs}
  There exists a constant $C>0$ such that for all non-negative integers $n,\beta$ we have
\[
  (\sin{\theta})^{\frac{1}{2}}(\cos{\theta})^{\beta + \frac{1}{2}} |P_n^{(0,\beta)}(\cos{2\theta})| \leq \frac{C}{\sqrt{2}}(2n+\beta+1)^{-\frac{1}{4}}, \quad \theta \in [0,\pi).
\]
\end{thm}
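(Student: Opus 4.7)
The plan is to convert the claim into a pointwise bound on a normalized eigenfunction of a Schr\"odinger-type operator on $(0,\pi/2)$, then extract the bound from a combination of the standard $L^2$-normalization coming from the orthogonality of Jacobi polynomials and an energy identity. Throughout, set $N := 2n+\beta+1$ and
\[
\Psi(\theta) := (\sin\theta)^{1/2}(\cos\theta)^{\beta+1/2}P_n^{(0,\beta)}(\cos 2\theta), \qquad \theta \in (0,\pi/2).
\]

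The first step is an ODE reduction. The Jacobi differential equation
\[
(1-x^2)y'' + [\beta - (\beta+2)x]y' + n(n+\beta+1)y = 0
\]
for $y = P_n^{(0,\beta)}$, after the substitution $x = \cos 2\theta$ and conjugation by the gauge factor $(\sin\theta)^{1/2}(\cos\theta)^{\beta+1/2}$, becomes a Sturm--Liouville equation
\[
-\Psi''(\theta) + W_\beta(\theta)\Psi(\theta) = N^2 \Psi(\theta),
\]
where the P\"oschl--Teller-type potential $W_\beta(\theta) = -\frac{1}{4\sin^2\theta} + \frac{\beta^2-1/4}{\cos^2\theta}$ is explicit (the exact coefficients would be pinned down by direct computation). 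The gauge factor also ensures that $\Psi$ extends continuously to the endpoints, vanishing at both. Using the orthogonality relation $\int_{-1}^1 (1+x)^\beta [P_n^{(0,\beta)}(x)]^2\,dx = 2^{\beta+1}/(2n+\beta+1)$ together with this change of variables, one immediately computes
\[
\int_0^{\pi/2} |\Psi(\theta)|^2 \, d\theta = \frac{1}{2N}.
\]

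The heart of the proof is to extract a pointwise bound from the ODE and this $L^2$-normalization. Define the energy $E(\theta) := \Psi'(\theta)^2 + (N^2 - W_\beta(\theta))\Psi(\theta)^2$. A direct computation using the ODE gives $E'(\theta) = -W_\beta'(\theta)\Psi(\theta)^2$, so $E$ is monotone on intervals where $W_\beta$ is monotone. At a critical point $\theta_*$ of $\Psi^2$ inside the classically allowed region $\{W_\beta < N^2\}$ one has $\Psi'(\theta_*) = 0$, so
\[
\Psi(\theta_*)^2 = \frac{E(\theta_*)}{N^2 - W_\beta(\theta_*)}.
\]
Combining the monotonicity of $E$ with the $L^2$-bound $\|\Psi\|_2^2 = 1/(2N)$, one obtains an upper bound $E(\theta_*) = O(N^{3/2})$; then, after separating the analysis into the turning-point regime (where $W_\beta \approx N^2$ and an Airy scaling applies) and the bulk regime (WKB), one deduces $|\Psi(\theta_*)|^2 \lesssim N^{-1/2}$, which is the sought-after bound.

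The main obstacle is making Step 3 \emph{uniform} in both $n$ and $\beta$. The turning point $\theta_t$ (where $W_\beta(\theta_t)=N^2$) drifts with $\beta$: for $\beta \ll n$ it lies well in the interior of $(0,\pi/2)$, while for $\beta$ comparable to $n$ it approaches the singular endpoint $\pi/2$. Managing the two regimes uniformly is delicate; I would handle this by comparing $\Psi$ locally near the turning point with an Airy ansatz and, near the endpoint $\pi/2$, with the explicitly solvable P\"oschl--Teller eigenfunction that has the same leading $1/\cos^2\theta$ singularity as $W_\beta$. The $N^{-1/4}$ exponent in the claim (weaker than the $N^{-1/2}$ suggested by WKB in the bulk) is precisely what this comparison naturally yields without having to invoke the sharpest possible Airy asymptotics. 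If the ODE route proves unwieldy, the Mehler--Dirichlet integral representation for Jacobi polynomials combined with stationary phase is a viable backup.
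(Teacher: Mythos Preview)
The paper does not actually prove this theorem: it is quoted as ``a special case of a result obtained by the first author and Schlichtkrull'' and referenced to the separate paper \cite{hsjacobi}, with no argument given here. So there is no in-paper proof to compare against; what you are attempting is to supply an independent proof of a cited external result.

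Your overall strategy---the Liouville transformation to a Schr\"odinger equation with P\"oschl--Teller potential, followed by an energy/WKB argument---is a legitimate and well-known route to uniform bounds on Jacobi polynomials, and your ODE reduction and $L^2$ computation are correct. However, the proposal as written has a genuine gap at the crucial step. You assert that ``combining the monotonicity of $E$ with the $L^2$-bound $\|\Psi\|_2^2=1/(2N)$, one obtains $E(\theta_*)=O(N^{3/2})$'', but this does not follow from what you have written: monotonicity of $E$ on each side of the unique minimum of $W_\beta$ and a global $L^2$ bound do not by themselves control $E$ pointwise. One needs an additional ingredient---for instance, integrating the identity $E'=-W_\beta'\Psi^2$ against a suitable weight, or a Sonine--P\'olya-type comparison controlling the amplitude $\Psi^2(N^2-W_\beta)^{1/2}$---and this is exactly where the uniformity in $\beta$ bites, since for large $\beta$ the potential $W_\beta$ is steep near $\pi/2$ and the turning point is close to the singular endpoint. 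You correctly flag this as ``the main obstacle'', but then only gesture at Airy and P\"oschl--Teller comparisons without carrying them out; in practice, making these comparisons uniform in the ratio $\beta/n$ is the entire content of the theorem, not a detail to be filled in later.

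In short: the outline is sound and the reduction is right, but the argument is a sketch that stops precisely at the hard part. If you want a self-contained proof along these lines, look at the actual Haagerup--Schlichtkrull paper or at Krasikov's work on uniform Jacobi bounds, where the energy/envelope method is made rigorous; alternatively, as you note, the integral-representation route (Dirichlet--Mehler combined with stationary phase) is also viable but requires equally careful bookkeeping near the endpoints.
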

In particular, for $\theta=\frac{\pi}{4}$ we get
\[
  2^{-\frac{\beta+1}{2}}|P_n^{(0,\beta)}(0)| \leq \frac{C}{\sqrt{2}}(2n+\beta+1)^{-\frac{1}{4}}.
\]
For the special point $z=\frac{1}{\sqrt{2}}$, it follows that
\[
\biggl\vert h_{p,q}^0\left(\frac{1}{\sqrt{2}}\right)\biggr\vert \leq C(p+q+1)^{-\frac{1}{4}},
\]
where $C$ is a constant independent of $p$ and $q$.

Recall that a function $f:X \lra Y$ from a metric space $X$ to a metric space $Y$ is H\"older continuous with exponent $\alpha>0$ if there exists a constant $C>0$ such that $d_Y(f(x_1),f(x_2))\leq Cd_X(x_1,x_2)^{\alpha}$, for all $x_1,x_2 \in X$. The following result gives H\"older continuity with exponent $\frac{1}{4}$ of the spherical functions on the circle in $\mathbb{D}$ with radius $\frac{1}{\sqrt{2}}$, centered at the origin, with a constant independent of $p$ and $q$.
\begin{cor} \label{cor:hoelder}
  For all $p,q \geq 0$, we have
\[
  \biggl\vert h_{p,q}^0\left(\frac{e^{i\theta_1}}{\sqrt{2}}\right)-h_{p,q}^0\left(\frac{e^{i\theta_2}}{\sqrt{2}}\right) \biggr\vert \leq \tilde{C}|\theta_1-\theta_2|^{\frac{1}{4}}
\]
for all $\theta_1,\theta_2 \in [0,2\pi)$, where $\tilde{C}$ is a constant independent of $p$ and $q$.
\end{cor}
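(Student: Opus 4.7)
The plan is to exploit the fact that on the circle $|z|=1/\sqrt{2}$ the argument $2|z|^2-1$ of the Jacobi polynomial in the explicit formula for $h_{p,q}^0$ vanishes, so that the only $\theta$-dependence left is a single complex exponential.

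First I would substitute $z=e^{i\theta}/\sqrt{2}$ into the piecewise definition of $h_{p,q}^0$. Since $2|z|^2-1=0$, the Jacobi polynomial factor specializes to the constant $P^{(0,|p-q|)}_{\min(p,q)}(0)$, and combining the two cases $p\geq q$ and $p<q$ gives the uniform expression
\[
  h_{p,q}^0\left(\frac{e^{i\theta}}{\sqrt{2}}\right) = 2^{-|p-q|/2}\, e^{i(p-q)\theta}\, P^{(0,|p-q|)}_{\min(p,q)}(0).
\]
Hence the difference in the statement factors as a scalar independent of $\theta_1,\theta_2$ multiplied by $|e^{i(p-q)\theta_1}-e^{i(p-q)\theta_2}|$.

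Next I would apply Theorem \ref{thm:hs} at the special value $\theta=\pi/4$, where $\cos 2\theta=0$ and $\sin\theta=\cos\theta=1/\sqrt{2}$. Setting $n=\min(p,q)$ and $\beta=|p-q|$ (so that $2n+\beta+1=p+q+1$), the theorem specializes to
\[
  2^{-|p-q|/2}\,\bigl|P^{(0,|p-q|)}_{\min(p,q)}(0)\bigr| \leq C(p+q+1)^{-1/4},
\]
so the full difference is controlled by $C(p+q+1)^{-1/4}\,|e^{i(p-q)\theta_1}-e^{i(p-q)\theta_2}|$.

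The last step is a short case analysis converting this into a H\"older bound with exponent $1/4$. Writing $m=|p-q|$, $N=p+q+1$ (note $m\leq N$), and $t=|\theta_1-\theta_2|$, and combining the elementary bounds $|e^{im\theta_1}-e^{im\theta_2}|\leq mt$ and $\leq 2$, one splits at the threshold $mt=1$: in the regime $mt\leq 1$ one uses $N^{-1/4}\leq m^{-1/4}$ together with $m^{3/4}\leq t^{-3/4}$, and in the regime $mt\geq 1$ one uses $m^{-1/4}\leq t^{1/4}$. Either way the bound $\tilde{C}\,t^{1/4}$ drops out. There is no real obstacle; the one substantive point is the trade-off between the oscillation frequency $|p-q|$ in the exponential and the $(p+q+1)^{-1/4}$ decay from Theorem \ref{thm:hs}, and it is exactly this trade-off that pins the H\"older exponent at $1/4$.
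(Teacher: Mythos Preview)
Your proposal is correct and follows essentially the same route as the paper: factor $h_{p,q}^0(e^{i\theta}/\sqrt{2})=e^{i(p-q)\theta}h_{p,q}^0(1/\sqrt{2})$, invoke Theorem~\ref{thm:hs} at $\theta=\pi/4$ to get $|h_{p,q}^0(1/\sqrt{2})|\leq C(p+q+1)^{-1/4}$, and then interpolate between the Lipschitz bound $|p-q||\theta_1-\theta_2|$ and the trivial bound $2$ for $|e^{i(p-q)\theta_1}-e^{i(p-q)\theta_2}|$. The only cosmetic difference is that the paper packages the final interpolation via the single-line inequality $A\leq A_1^{1/4}A_2^{3/4}$ (taking the $1/4$--$3/4$ geometric mean of the two bounds), whereas you do the equivalent case split at $|p-q|\,|\theta_1-\theta_2|=1$; both yield $\tilde C=2^{3/4}C$.
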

\begin{proof}
  From the explicit form of $h_{p,q}^0$ it follows that for all $\theta \in [0,2\pi)$,
\[
  h_{p,q}^0 \left( \frac{e^{i\theta}}{\sqrt{2}} \right) = e^{i(p-q)\theta}h_{p,q}^0 \left( \frac{1}{\sqrt{2}} \right).
\]
This implies that
\begin{equation} \nonumber
\begin{split}
  \biggl\vert h_{p,q}^0 \left( \frac{e^{i\theta_1}}{\sqrt{2}} \right) - h_{p,q}^0 \left( \frac{e^{i\theta_2}}{\sqrt{2}} \right) \biggr\vert&=\biggl\vert e^{i(p-q)\theta_1}-e^{i(p-q)\theta_2} \biggr\vert \biggl\vert h_{p,q}^0\left(\frac{1}{\sqrt{2}}\right) \biggr\vert \\
    &\leq |p-q||\theta_1-\theta_2|C(p+q+1)^{-\frac{1}{4}} \\
    &\leq C(p+q+1)^{\frac{3}{4}}|\theta_1-\theta_2|
\end{split}
\end{equation}
for all $\theta_1,\theta_2 \in [0,2\pi)$. We also have the estimate
\[
  \biggl\vert h_{p,q}^0 \left( \frac{e^{i\theta_1}}{\sqrt{2}} \right) - h_{p,q}^0 \left( \frac{e^{i\theta_2}}{\sqrt{2}} \right) \biggr\vert \leq 2 \biggl\vert h_{p,q}^0\left(\frac{1}{\sqrt{2}}\right) \biggr\vert \leq 2C(p+q+1)^{-\frac{1}{4}}
\]
for all $\theta_1,\theta_2 \in [0,2\pi)$. Combining the two, we get
\begin{equation} \nonumber
\begin{split}
  \biggl\vert h_{p,q}^0 \left( \frac{e^{i\theta_1}}{\sqrt{2}} \right) - h_{p,q}^0 \left( \frac{e^{i\theta_2}}{\sqrt{2}} \right) \biggr\vert &\leq \left( C(p+q+1)^{\frac{3}{4}}|\theta_1-\theta_2| \right)^{\frac{1}{4}} \left( 2C(p+q+1)^{-\frac{1}{4}} \right)^{\frac{3}{4}} \\
    &=\tilde{C}|\theta_1-\theta_2|^{\frac{1}{4}}
\end{split}
\end{equation}
for all $\theta_1,\theta_2 \in [0,2\pi)$, where $\tilde{C}=2^{\frac{3}{4}}C$.
\end{proof}
By Proposition \ref{prp:cbfmcgp}, a $\U(1)$-bi-invariant completely bounded Fourier multiplier $\varphi:\U(2) \lra \bbC$ can be decomposed as
\[
  \varphi=\sum_{p,q=0}^{\infty} c_{p,q}h_{p,q},
\]
where $c_{p,q} \in \bbC$ and $\sum_{p,q=0}^{\infty} |c_{p,q}|=\|\varphi\|_{M_0A(\U(2))}$. It follows that
\[
  \varphi(u)=\varphi\left( \begin{array}{ll} u_{11} & u_{12} \\ u_{21} & u_{22} \end{array} \right)=\varphi^0(u_{11}), \quad u \in \U(2)
\]
for some continuous function $\varphi^0:\overline{\mathbb{D}} \lra \bbC$. 
\begin{cor} \label{cor:keyestimateu2u1theta}
	Let $\varphi:\U(2) \lra \bbC$ be a $\U(1)$-bi-invariant completely bounded Fourier multiplier. Then $\varphi(u)=\varphi^0(u_{11})$, and for all $\theta_1,\theta_2 \in [0,2\pi)$ we have
\[
   \biggl\vert\varphi^0 \left(\frac{e^{i\theta_1}}{\sqrt{2}} \right)-\varphi^0 \left(\frac{e^{i\theta_2}}{\sqrt{2}} \right)\biggr\vert \leq \tilde{C} |\theta_1-\theta_2|^{\frac{1}{4}}\|\varphi\|_{M_0A(\U(2))}.
\]
\end{cor}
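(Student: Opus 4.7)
The proof is essentially a routine assembly of the ingredients that have just been established, so I would keep it short. The plan is to combine the spherical decomposition from Proposition \ref{prp:cbfmcgp} with the uniform Hölder estimate from Corollary \ref{cor:hoelder}.

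First, by Proposition \ref{prp:cbfmcgp} applied to the compact Gelfand pair $(\U(2),\U(1))$, any $\U(1)$-bi-invariant completely bounded Fourier multiplier $\varphi$ on $\U(2)$ has the unique decomposition
\[
\varphi = \sum_{p,q \geq 0} c_{p,q} h_{p,q}, \qquad \sum_{p,q \geq 0} |c_{p,q}| = \|\varphi\|_{M_0A(\U(2))}.
\]
Since each $h_{p,q}$ depends only on $u_{11}$ via $h_{p,q}(u) = h_{p,q}^0(u_{11})$, the same holds for $\varphi$, and we obtain $\varphi(u) = \varphi^0(u_{11})$ with $\varphi^0 = \sum_{p,q} c_{p,q} h_{p,q}^0$, where the series converges uniformly on $\overline{\mathbb{D}}$ because $|h_{p,q}^0| \leq 1$ (the $h_{p,q}$ are positive definite spherical functions with $h_{p,q}(e) = 1$).

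Next, I would apply the triangle inequality on the circle of radius $\tfrac{1}{\sqrt{2}}$ and use the uniform bound from Corollary \ref{cor:hoelder} termwise:
\[
\biggl\vert \varphi^0\!\left(\tfrac{e^{i\theta_1}}{\sqrt{2}}\right) - \varphi^0\!\left(\tfrac{e^{i\theta_2}}{\sqrt{2}}\right) \biggr\vert
\leq \sum_{p,q \geq 0} |c_{p,q}| \cdot \biggl\vert h_{p,q}^0\!\left(\tfrac{e^{i\theta_1}}{\sqrt{2}}\right) - h_{p,q}^0\!\left(\tfrac{e^{i\theta_2}}{\sqrt{2}}\right) \biggr\vert
\leq \tilde{C}\,|\theta_1 - \theta_2|^{1/4} \sum_{p,q \geq 0} |c_{p,q}|.
\]
Plugging in $\sum_{p,q} |c_{p,q}| = \|\varphi\|_{M_0A(\U(2))}$ yields the claimed inequality with the same constant $\tilde{C}$ as in Corollary \ref{cor:hoelder}.

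There is no real obstacle here; the only thing that needs a word of justification is the interchange of the sum and the absolute value, which is legitimate because the series $\sum_{p,q} c_{p,q} h_{p,q}^0$ converges absolutely and uniformly on $\overline{\mathbb{D}}$. The whole point of the corollary is that the Hölder constant from Corollary \ref{cor:hoelder} is independent of $p$ and $q$, so that summation against the $\ell^1$-sequence $(c_{p,q})$ — whose norm equals $\|\varphi\|_{M_0A(\U(2))}$ — preserves the exponent $\tfrac{1}{4}$ and only costs a factor of the $M_0A$-norm.
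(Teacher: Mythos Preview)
Your proof is correct and follows essentially the same approach as the paper's: decompose $\varphi$ via Proposition~\ref{prp:cbfmcgp} into the absolutely convergent series $\sum c_{p,q}h_{p,q}$, then apply Corollary~\ref{cor:hoelder} termwise and sum. The only cosmetic difference is that the paper writes down an explicit matrix $u_\theta \in \U(2)$ with $(u_\theta)_{11}=\frac{e^{i\theta}}{\sqrt{2}}$ to pass from $\varphi^0$ back to $\varphi$, whereas you work directly with the uniformly convergent series for $\varphi^0$ on $\overline{\mathbb{D}}$; both are valid.
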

\begin{proof}
  Let $\theta \in [0,2\pi)$, and let $u_{11,\theta}=\frac{e^{i\theta}}{\sqrt{2}}$. Then the matrix
\[
  u_{\theta}=\left( \begin{array}{cc} \frac{e^{i\theta}}{\sqrt{2}} & \frac{1}{\sqrt{2}} \\ \frac{1}{\sqrt{2}} & -\frac{e^{-i\theta}}{\sqrt{2}} \end{array} \right)
\]
is an element of $\U(2)$. In this way we get
\begin{equation} \nonumber
\begin{split}
  \biggl\vert\varphi^0 \left(\frac{e^{i\theta_1}}{\sqrt{2}} \right)-\varphi^0 \left(\frac{e^{i\theta_2}}{\sqrt{2}} \right)\biggr\vert &= |\varphi(u_{\theta_1})-\varphi(u_{\theta_2})| \\
    &\leq \sum_{p,q=0}^{\infty} |c_{p,q}|\biggl\vert h_{p,q}^0\left(\frac{e^{i\theta_1}}{\sqrt{2}}\right)-h_{p,q}^0\left(\frac{e^{i\theta_2}}{\sqrt{2}}\right)\biggr\vert \\
    &= \tilde{C}\|\varphi\|_{M_0A(\U(2))}|\theta_1-\theta_2|^{\frac{1}{4}}.
\end{split}
\end{equation}
\end{proof}

For $\alpha \in \bbR$ consider the map $K \lra G$ defined by $k \mapsto D_{\alpha}kD_{\alpha}$, where $D_{\alpha} = \diag(e^{\alpha},1,e^{-\alpha},1)$. Given a $K$-bi-invariant completely bounded Fourier multiplier on $G$, this map gives rise to a $K_1$-bi-invariant completely bounded Fourier multiplier on $K$.
\begin{lem} \label{lem:fromGtoK}
	Let $\varphi:G \lra \bbC$ be a $K$-bi-invariant completely bounded Fourier multiplier, and for $\alpha \in \bbR$ let $\psi_{\alpha}:K \lra \bbC$ be defined by $\psi_{\alpha}(k)=\varphi(D_{\alpha}kD_{\alpha})$. Then $\psi_{\alpha}$ is $K_1$-bi-invariant and satisfies
	\[
		\|\psi_{\alpha}\|_{M_0A(K)} \leq \|\varphi\|_{M_0A(G)}.
	\]
\end{lem}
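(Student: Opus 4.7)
The plan is to handle the two claims separately, both by direct computation leveraging the fact that $K_1$ commutes element-wise with $D_\alpha$ (noted just after Example~\ref{exm:symplecticgroup}).

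For $K_1$-bi-invariance, I would take $k_1, k_2 \in K_1$ and $k \in K$ and compute
\[
  \psi_\alpha(k_1 k k_2) = \varphi(D_\alpha k_1 k k_2 D_\alpha) = \varphi(k_1 D_\alpha k D_\alpha k_2),
\]
where I used $D_\alpha k_1 = k_1 D_\alpha$ and $k_2 D_\alpha = D_\alpha k_2$. Since $K_1 \subset K$ and $\varphi$ is $K$-bi-invariant, the right-hand side equals $\varphi(D_\alpha k D_\alpha) = \psi_\alpha(k)$.

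For the norm bound, the natural tool is the Bo\.zejko--Fendler characterization \eqref{eq:cbmcharacterization}. Given $\epsilon > 0$, pick bounded continuous maps $P, Q : G \lra \mathcal{H}$ with $\varphi(y^{-1} x) = \langle P(x), Q(y) \rangle$ for all $x, y \in G$ and $\|P\|_\infty \|Q\|_\infty \leq \|\varphi\|_{M_0A(G)} + \epsilon$. Define $\tilde P, \tilde Q : K \lra \mathcal{H}$ by
\[
  \tilde P(k) = P(k D_\alpha), \qquad \tilde Q(k) = Q(k D_\alpha^{-1}).
\]
These are bounded and continuous on the compact group $K$. For $k_1, k_2 \in K$, setting $x = k_2 D_\alpha$ and $y = k_1 D_\alpha^{-1}$ gives $y^{-1} x = D_\alpha k_1^{-1} k_2 D_\alpha$, so
\[
  \langle \tilde P(k_2), \tilde Q(k_1) \rangle = \varphi(D_\alpha k_1^{-1} k_2 D_\alpha) = \psi_\alpha(k_1^{-1} k_2).
\]
This exhibits $\psi_\alpha$ in the form required by \eqref{eq:cbmcharacterization} on $K$, yielding $\|\psi_\alpha\|_{M_0 A(K)} \leq \|\tilde P\|_\infty \|\tilde Q\|_\infty \leq \|P\|_\infty \|Q\|_\infty$. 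Taking the infimum over all such decompositions (equivalently letting $\epsilon \to 0$) completes the proof.

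There is no serious obstacle here; the only small point to keep in mind is to choose $x$ and $y$ so that $y^{-1} x$ produces the desired conjugate $D_\alpha k_1^{-1} k_2 D_\alpha$ with variables that separate cleanly into the two slots of the inner product (hence the asymmetric choice $D_\alpha$ versus $D_\alpha^{-1}$). This lemma is really a form of a general principle that pre- and post-composition by translations (here, right multiplications by the fixed elements $D_{\pm\alpha}$), followed by restriction to a subgroup, is a contraction on $M_0A$; the Bo\.zejko--Fendler characterization makes this transparent.
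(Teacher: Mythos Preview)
Your proof is correct and follows essentially the same route as the paper: both parts use the commutation of $K_1$ with $D_\alpha$ for bi-invariance, and the Bo\.zejko--Fendler characterization with the maps $k\mapsto P(kD_\alpha)$ and $k\mapsto Q(kD_\alpha^{-1})$ for the norm bound. The only cosmetic difference is that the paper invokes directly that the minimum in \eqref{eq:cbmcharacterization} is attained, whereas you phrase it via an $\epsilon$-infimum; either is fine.
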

\begin{proof}
Using the fact that the group elements $D_{\alpha}$ commute with $K_1$, it follows that for all $k \in K$ and $k_1,k_2 \in K_1 \subset K_2$,
\[
  \psi_{\alpha}(k_1kk_2)=\varphi(D_{\alpha}k_1kk_2D_{\alpha})=\varphi(k_1D_{\alpha}kD_{\alpha}k_2)=\varphi(D_{\alpha}kD_{\alpha})=\psi_{\alpha}(k),
\]
so $\psi_{\alpha}$ is $K_1$-bi-invariant.

By the characterization of completely bounded Fourier multipliers due to Bo\.zejko and Fendler (see Section \ref{sec:introduction}), we know that there exist bounded continuous maps $P,Q:G \lra \mathcal{H}$, where $\mathcal{H}$ is a Hilbert space, such that $\varphi(y^{-1}x)=\langle P(x),Q(y) \rangle$ for all $x,y \in G$, and, moreover, $\|\varphi\|_{M_0A(G)}=\|P\|_{\infty}\|Q\|_{\infty}$.

For all $k_1,k_2 \in K$ we have
\begin{equation} \nonumber
\begin{split}
   \psi_{\alpha}(k_2^{-1}k_1)&=\varphi(D_{\alpha}k_2^{-1}k_1D_{\alpha})=\varphi((k_2D_{\alpha}^{-1})^{-1}k_1D_{\alpha}) \\
      &=\langle P(k_1D_{\alpha}),Q(k_2D_{\alpha}^{-1}) \rangle=\langle P_{\alpha}(k_1),Q_{\alpha}(k_2) \rangle,
\end{split}
\end{equation}
where $P_{\alpha}$, $Q_{\alpha}$ are the bounded continuous maps from $K$ to $\mathcal{H}$ defined by $P_{\alpha}(k)=P(kD_{\alpha})$ and $Q_{\alpha}(k)=Q(kD_{\alpha}^{-1})$. Because $KD_{\alpha}$ and $KD_{\alpha}^{-1}$ are subsets of $G$, we get $\|P_{\alpha}\|_{\infty} \leq \|P\|_{\infty}$ and $\|Q_{\alpha}\|_{\infty} \leq  \|Q\|_{\infty}$, and hence $\|\psi_{\alpha}\|_{M_0A(K)} \leq \|\varphi\|_{M_0A(G)}$.
\end{proof}
From the fact that $\psi_{\alpha}$ is $K_1$-bi-invariant, it follows that $\psi_{\alpha}(u)=\psi_{\alpha}^0(u_{11})$, where $\psi_{\alpha}^0:\overline{\mathbb{D}} \lra \bbC$ is a continuous function.\\

Suppose now that $\alpha_1 \geq \alpha_2 \geq 0$, and let $D(\alpha_1,\alpha_2)$ be as defined in Example \ref{exm:symplecticgroup}, i.e., $D(\alpha_1,\alpha_2)=\diag(e^{\alpha_1},e^{\alpha_2},e^{-\alpha_1},e^{-\alpha_2})$. If we find an element of the form $D_{\alpha}kD_{\alpha}$ in $KD(\alpha_1,\alpha_2)K$, we can relate the value of a $K$-bi-invariant completely bounded Fourier multiplier $\varphi$ to the value of the multiplier $\psi_{\alpha}$ that was defined in Lemma \ref{lem:fromGtoK}. This only works for certain $\alpha_1,\alpha_2 \geq 0$. We will specify which possibilities of $\alpha_1$ and $\alpha_2$ we consider, and it will become clear from our proofs that in these cases such $\alpha$ and $k$ exist. It turns out to be sufficient to consider certain candidates for $k$, namely the matrices that in the $\U(2)$-representation of $K$ have the form
\begin{equation} \label{eq:uform}
u=\left( \begin{array}{cc} a+ib & -\sqrt{1-a^2-b^2} \\ \sqrt{1-a^2-b^2} & a-ib \end{array} \right)
\end{equation}
with $a^2+b^2 \leq 1$. In particular, $u \in \SU(2)$.

In the following lemmas we let $\|h\|_{HS}=\Tr(h^th)^{\frac{1}{2}}$ and $\det(h)$ denote the Hilbert-Schmidt norm and the determinant of a matrix in $M_4(\bbR)$ respectively. Note that $\det(k)=1$ for all $k \in K$, because $K$ is a connected subgroup of the orthogonal group $\mathrm{O}(4)$.
\begin{lem} \label{lem:kakeqs}
  Let $g \in G=\Sp(2,\bbR)$. Then $g \in KD(\beta,\gamma)K$, where $\beta,\gamma \in \bbR$ are uniquely determined by the condition $\beta \geq \gamma \geq 0$ together with the two equations
  \begin{equation} \label{eq:kakeqs}
  \begin{cases}
    & \sinh^2 \beta + \sinh^2 \gamma = \frac{1}{8}\|g-(g^t)^{-1}\|_{HS}^2, \\  
    & \sinh^2 \beta \sinh^2 \gamma = \frac{1}{16}\det(g-(g^t)^{-1}).
  \end{cases}
  \end{equation}
\end{lem}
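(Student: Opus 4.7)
The plan is to unfold the expression $g-(g^t)^{-1}$ through the $KAK$ decomposition, use that elements of $K$ are orthogonal, and reduce the two quantities on the right-hand sides of \eqref{eq:kakeqs} to elementary symmetric functions in $\sinh^2\beta$ and $\sinh^2\gamma$.

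By the polar decomposition of $G=\Sp(2,\bbR)$ recalled in Example \ref{exm:symplecticgroup}, every $g \in G$ can be written as $g=k_1 D(\beta,\gamma) k_2$ with $k_1,k_2\in K$ and $\beta\ge\gamma\ge 0$ uniquely determined. The first key observation is that $K$ sits inside the orthogonal group $\mathrm{O}(4)$, so $k^t=k^{-1}$ for every $k\in K$; together with the fact that $D=D(\beta,\gamma)$ is symmetric, this will let me collapse transposes and inverses. Concretely, writing $D^{-1}=D(-\beta,-\gamma)$ and computing $(g^t)^{-1}=((k_1Dk_2)^t)^{-1}=k_1 D^{-1} k_2$, I obtain the identity
\[
 g-(g^t)^{-1}=k_1\bigl(D-D^{-1}\bigr)k_2,
\]
with $D-D^{-1}=\diag(2\sinh\beta,2\sinh\gamma,-2\sinh\beta,-2\sinh\gamma)$.

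From here the two equations drop out by standard invariance properties. Since the Hilbert-Schmidt norm is invariant under left and right multiplication by orthogonal matrices,
\[
 \|g-(g^t)^{-1}\|_{HS}^2=\|D-D^{-1}\|_{HS}^2 = 8\sinh^2\beta+8\sinh^2\gamma,
\]
giving the first line of \eqref{eq:kakeqs}. Since $\det(k_i)=1$ for $i=1,2$ (as noted in the statement, because $K$ is connected in $\mathrm{O}(4)$), multiplicativity of determinant yields
\[
 \det\bigl(g-(g^t)^{-1}\bigr)=\det\bigl(D-D^{-1}\bigr)=16\sinh^2\beta\sinh^2\gamma,
\]
giving the second line. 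I would finish by noting that $\sinh^2\beta$ and $\sinh^2\gamma$ are then the two roots of an explicit quadratic, and the condition $\beta\ge\gamma\ge 0$ (which forces $\sinh^2\beta\ge\sinh^2\gamma\ge 0$) pins down $\beta$ and $\gamma$ uniquely, in agreement with the uniqueness of the $KAK$ parameter in $\overline{A^+}$.

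There is no real obstacle: the lemma is essentially a bookkeeping identity extracting $\beta$ and $\gamma$ from two orthogonally and determinantally invariant quantities. The only point to be a little careful about is the sign conventions in $D-D^{-1}$, in particular checking that the product of the four diagonal entries comes out with a positive sign to match $16\sinh^2\beta\sinh^2\gamma$; this is immediate from the two minus signs in the lower block of $D-D^{-1}$.
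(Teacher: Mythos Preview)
Your proof is correct and follows essentially the same approach as the paper: factor $g-(g^t)^{-1}=k_1(D-D^{-1})k_2$ using orthogonality of $K$ and symmetry of $D$, then read off the Hilbert--Schmidt norm and determinant from the diagonal matrix $D-D^{-1}$, and finally recover $\sinh^2\beta,\sinh^2\gamma$ as the ordered roots of the resulting quadratic.
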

\begin{proof}
Let $g \in G$. By the $K\overline{A^{+}}K$-decomposition, we have $g=k_1D(\beta,\gamma)k_2$ for some $k_1,k_2 \in K$ and some $\beta,\gamma \in \bbR$ satisfying $\beta \geq \gamma \geq 0$. Since $k_i=(k_i^t)^{-1}$, $i=1,2$, and $D(\beta,\gamma)=D(\beta,\gamma)^t$, we have $(g^t)^{-1}=k_1D(\beta,\gamma)^{-1}k_2$. Hence, $g-(g^t)^{-1}=k_1(D(\beta,\gamma)-D(\beta,\gamma)^{-1})k_2$, which implies that
\[
  \|g-(g^t)^{-1}\|_{HS}^2=\|D(\beta,\gamma)-D(\beta,\gamma)^{-1}\|_{HS}^2=8(\sinh^2 \beta + \sinh^2 \gamma)
\]
and
\[
  \det(g-(g^t)^{-1})=\det(D(\beta,\gamma)-D(\beta,\gamma)^{-1})=16\sinh^2\beta\sinh^2\gamma,
\]
i.e., $(\beta,\gamma)$ satisfies \eqref{eq:kakeqs}.

Put $c_1(g)=\frac{1}{8}\|g-(g^t)^{-1}\|_{HS}^2$ and $c_2(g)=\frac{1}{16}\det(g-(g^t)^{-1})$. Then $\sinh^2 \beta$ and $\sinh^2 \gamma$ are the two solutions of the second order equation $x^2-c_1(g)x+c_2(g)=0$, and since $\beta \geq \gamma \geq 0$, the numbers $\sinh^2 \beta$ and $\sinh^2 \gamma$ are uniquely determined by \eqref{eq:kakeqs}. This also determines $(\beta,\gamma) \in \bbR^2$ uniquely under the condition $\beta \geq \gamma \geq 0$.
\end{proof}
\begin{lem} \label{lem:hyperbolaseqs}
  Let $\alpha \geq 0$ and $\beta \geq \gamma \geq 0$. If $u \in K$ is of the form \eqref{eq:uform} with respect to the identification of $K$ with $\U(2)$, then $D_{\alpha}uD_{\alpha} \in KD(\beta,\gamma)K$ if and only if
\begin{equation} \label{eq:hyperbolaseqs}
  \begin{cases}
    & \sinh \beta \sinh \gamma = \sinh^2 \alpha (1-a^2-b^2), \\  
    & \sinh \beta - \sinh \gamma = \sinh(2\alpha)|a|.
  \end{cases}
  \end{equation}
\end{lem}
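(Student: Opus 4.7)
The plan is to apply Lemma~\ref{lem:kakeqs} to $g = D_\alpha u D_\alpha$. Since $K \subset \mathrm{O}(4)$ one has $u^{-1} = u^t$, and since $D_\alpha$ is diagonal hence symmetric, $(g^t)^{-1} = D_{-\alpha} u D_{-\alpha}$. Thus the object to analyse is $g - (g^t)^{-1} = D_\alpha u D_\alpha - D_{-\alpha} u D_{-\alpha}$, and the task reduces to checking that the two equations \eqref{eq:kakeqs} translate precisely into \eqref{eq:hyperbolaseqs}.

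Next I would write $u$ as a $4 \times 4$ real matrix via the embedding $\U(2) \hookrightarrow \Sp(2,\bbR)$ displayed in Example~\ref{exm:symplecticgroup}. Setting $c = \sqrt{1-a^2-b^2}$ and carrying out the conjugation by $D_{\pm\alpha}$ entry-wise, a short bookkeeping computation shows that the difference $g - (g^t)^{-1}$ is block-diagonal with respect to the coordinate partition $\{1,2\} \sqcup \{3,4\}$, with blocks
\[
\begin{pmatrix} 2a\sinh(2\alpha) & -2c\sinh\alpha \\ 2c\sinh\alpha & 0 \end{pmatrix} \quad \text{and} \quad \begin{pmatrix} -2a\sinh(2\alpha) & 2c\sinh\alpha \\ -2c\sinh\alpha & 0 \end{pmatrix}.
\]
Reading off the Hilbert--Schmidt norm squared and the determinant then yields
\[
\tfrac{1}{8}\|g-(g^t)^{-1}\|_{HS}^2 = a^2\sinh^2(2\alpha) + 2(1-a^2-b^2)\sinh^2\alpha,
\]
\[
\tfrac{1}{16}\det(g-(g^t)^{-1}) = (1-a^2-b^2)^2 \sinh^4\alpha.
\]

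Finally I would combine these with Lemma~\ref{lem:kakeqs}. Since $\sinh\beta,\sinh\gamma \geq 0$, the determinant identity is equivalent to $\sinh\beta \sinh\gamma = (1-a^2-b^2)\sinh^2\alpha$, which is the first equation of \eqref{eq:hyperbolaseqs}. Subtracting twice this from the Hilbert--Schmidt identity gives $(\sinh\beta - \sinh\gamma)^2 = a^2\sinh^2(2\alpha)$, and the sign conditions $\beta \geq \gamma \geq 0$ and $\alpha \geq 0$ pin down the positive square root, yielding the second equation of \eqref{eq:hyperbolaseqs}. Both directions of the claimed equivalence follow at once. I do not anticipate a real obstacle here: the only mildly delicate point is to correctly identify which entries of $D_\alpha u D_\alpha$ and $D_{-\alpha} u D_{-\alpha}$ cancel, since the resulting block structure is precisely what makes the determinant factor into a perfect square and produces the clean form of \eqref{eq:hyperbolaseqs}.
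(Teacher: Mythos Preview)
Your proposal is correct and follows exactly the same approach as the paper: apply Lemma~\ref{lem:kakeqs} to $g=D_\alpha u D_\alpha$, compute $\tfrac{1}{8}\|g-(g^t)^{-1}\|_{HS}^2$ and $\tfrac{1}{16}\det(g-(g^t)^{-1})$, then take square roots using the sign conditions $\beta\geq\gamma\geq 0$, $\alpha\geq 0$, $1-a^2-b^2\geq 0$. Your explicit identification of the block-diagonal structure of $g-(g^t)^{-1}$ with respect to $\{1,2\}\sqcup\{3,4\}$ is a detail the paper suppresses under ``by direct computation'', but the argument is otherwise identical.
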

\begin{proof}
  Let $\alpha \geq 0$ and $\beta \geq \gamma \geq 0$. By Lemma \ref{lem:kakeqs}, $D_{\alpha}uD_{\alpha} \in KD(\beta,\gamma)K$ if and only if
\begin{equation} \label{eq:hypeq1}
\begin{split}
    \sinh^2 \beta + \sinh^2 \gamma &= \frac{1}{8}\|D_{\alpha}uD_{\alpha}-D_{\alpha}^{-1}uD_{\alpha}^{-1}\|_{HS}^2 \\
      &=\sinh^2(2\alpha)a^2 + 2\sinh^2\alpha(1-a^2-b^2),
\end{split}
\end{equation}
and
\begin{equation} \label{eq:hypeq2}
\begin{split}
    \sinh^2 \beta \sinh^2 \gamma &= \frac{1}{16}\det(D_{\alpha}uD_{\alpha}-D_{\alpha}^{-1}uD_{\alpha}^{-1}) \\
      &=\sinh^4\alpha(1-a^2-b^2)^2.
\end{split}
\end{equation}
Note that \eqref{eq:hypeq2} implies the first equation of the statement. Moreover, by \eqref{eq:hypeq1} and the first equation of the statement, we have $(\sinh \beta - \sinh \gamma)^2=\sinh^2(2\alpha)a^2$, which implies the second equation of the statement. Hence, \eqref{eq:hypeq1} and \eqref{eq:hypeq2} imply \eqref{eq:hyperbolaseqs}. Clearly, \eqref{eq:hyperbolaseqs} also implies equations \eqref{eq:hypeq1} and \eqref{eq:hypeq2}. This proves the lemma.
\end{proof}
Consider now the second Gelfand pair sitting inside $\Sp(2,\bbR)$, namely the pair of groups $(\SU(2),\SO(2))$. Both groups are naturally subgroups of $\U(2)$, so under the embedding into $G$, they give rise to compact Lie subgroups of $G$. The subgroup corresponding to $\SU(2)$ will be called $K_2$, and the one corresponding to $\SO(2)$ will be called $K_3$. The group $K_3$ commutes with the group generated by the elements $D_{\alpha}^{\prime}=\diag(e^{\alpha},e^{\alpha},e^{-\alpha},e^{-\alpha})$, where $\alpha \in \bbR$.

The subgroup $\SU(2) \subset \U(2)$ consisting of matrices of the form
\begin{equation} \label{eq:su2element}
  u=\left( \begin{array}{cc} a+ib & -c+id \\ c+id & a-ib \end{array} \right)
\end{equation}
with $a,b,c,d \in \bbR$ such that $a^2+b^2+c^2+d^2=1$ is after embedding into $G$ identified with
\begin{equation} \nonumber
\begin{split}
  K_2&=\left\{ \left( \begin{array}{cc} A & -B \\ B & A \end{array} \right) \biggm\vert u=A+iB \in \SU(2) \right\} \\
     &=\left( \begin{array}{cccc} a & -c & -b & -d \\ c & a & -d & b \\ b & d & a & -c \\ d & -b & c & a \end{array} \right),
\end{split}
\end{equation}
as follows directly from the considerations in Example \ref{exm:symplecticgroup}.

Recall from Section \ref{sec:preliminaries} that a continuous function $h$ not identical to $0$ on $G$ that is bi-invariant with respect to a Gelfand subgroup $K$ is a spherical function if and only if for all $x$ and $y$ we have $\int_K h(xky)dk=h(x)h(y)$. From this, it follows that if $K$ and $K^{\prime}$ are two unitarily equivalent Gelfand subgroups such that $K=uK^{\prime}u^{*}$ and such that $h$ is a spherical function of the pair $(G,K)$, we have that $\tilde{h}(x)=h(uxu^*)$ defines a spherical function for the pair $(G,K^{\prime})$. Indeed,
\begin{equation} \nonumber
\begin{split}
  \tilde{h}(x)\tilde{h}(y)&=h(uxu^*)h(uyu^*)=\int_K h(uxu^*kuyu^*)dk \\
    &=\int_{K^{\prime}} h(uxu^*uk^{\prime}u^*uyu^*)d(uk^{\prime}u^*)=\int_{K^{\prime}} \tilde{h}(xk^{\prime}y)dk^{\prime}.
\end{split}
\end{equation}
By a symmetry argument, we find a one-to-one correspondence between the spherical functions for both pairs.

By \cite[Theorem 47.6]{bump}, the pair $(\SU(2),\SO(2))$ is a Gelfand pair. This also follows from \cite[Chapter 9]{farautanalysisonliegroups}. Indeed, it is explained there that the pair $(\SU(2),K^{\prime})$, where $K^{\prime}$ is the subgroup isomorphic to $\SO(2)$ consisting of elements of the form $\diag(e^{is},e^{-is})$ for real numbers $s$, is a Gelfand pair, and the spherical functions are indexed by the integers $n \geq 0$, and for an element $u \in \SU(2)$, as given in equation \eqref{eq:su2element}, they are given by
\[
P_n(2|u_{11}|^2-1)=P_n(2(a^2+b^2)-1),
\]
where $P_n:[-1,1] \lra \bbR$ is the $n^{\textrm{th}}$ Legendre polynomial. However, the two embeddings of $\SO(2)$, i.e., the natural one and the one given by $K^{\prime}$, are unitarily equivalent by the following relation:
\[
  u\left( \begin{array}{cc} \cos{\theta} & -\sin{\theta} \\ \sin{\theta} & \cos{\theta} \end{array} \right)u^*=\left( \begin{array}{cc} e^{i\theta} & 0 \\ 0 & e^{-i\theta} \end{array} \right),
\]
where $u$ is the unitary matrix given by
\[
  u=\frac{1}{\sqrt{2}}\left( \begin{array}{cc} 1 & i \\ i & 1 \end{array} \right).
\]
More generally, for an element in $\SU(2)$ we get
\[
  u\left( \begin{array}{cc} a+ib & -c+id \\ c+id & a-ib \end{array} \right)u^*=\left( \begin{array}{cc} a+ic & b+id \\ -b+id & a-ic \end{array} \right),
\]
from which it follows that $(\SU(2),\SO(2))$ is a Gelfand pair, and the spherical functions for this pair are indexed by $n \geq 0$, and are given by
\[
  P_n(2(a^2+c^2)-1)=P_n(a^2-b^2+c^2-d^2),
\]
where the last equality follows from the relation $a^2+b^2+c^2+d^2=1$.

Note also that the double cosets of $K^{\prime}$ in $\SU(2)$ are labeled by $a^2+b^2-c^2-d^2$, and therefore the double cosets of $\SO(2)$ in $\SU(2)$ are labeled by $a^2-b^2+c^2-d^2$. Hence, every $\SO(2)$-bi-invariant function $\chi:\SU(2) \lra \bbC$ is of the form $\chi(u)=\chi^0(a^2-b^2+c^2-d^2)$ for a certain function $\chi^0:[-1,1] \lra \bbC$.
\begin{rmk}
  The Legendre polynomials $P_n(\cos{\theta})$, without the doubled angle, are the spherical functions for the Gelfand pair $(\SO(3),\SO(2))$. Cf.~\cite{faraut},\cite{vandijk}.
\end{rmk}
In what follows, we need the following estimates for the Legendre polynomials and their derivatives. Analogous results were obtained by Lafforgue in \cite{lafforguestrengthenedpropertyt} and used by Lafforgue and de la Salle in \cite{ldls}. Our estimates are slightly different. Therefore, we include a proof.
\begin{lem} \label{lem:lpestimates}
	For all non-negative integers $n$,
	\[
	  |P_n(x)-P_n(y)|\leq 4|x-y|^{\frac{1}{2}}
	\]
for $x,y \in [-\frac{1}{2},\frac{1}{2}]$, i.e., the Legendre polynomials are uniformly H\"older continuous on $[-\frac{1}{2},\frac{1}{2}]$ with exponent $\frac{1}{2}$.
\end{lem}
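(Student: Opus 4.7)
The plan is to obtain two complementary uniform bounds on the Legendre polynomials restricted to $[-\tfrac{1}{2}, \tfrac{1}{2}]$, one of order $n^{-1/2}$ pointwise and one of order $n^{1/2}$ for the derivative, and then to interpolate between them to obtain H\"older-$1/2$ continuity with a constant that is independent of $n$. The geometric-mean trick
\[
  |P_n(x)-P_n(y)| = |P_n(x)-P_n(y)|^{1/2}\cdot|P_n(x)-P_n(y)|^{1/2} \leq \bigl(2\|P_n\|_\infty\bigr)^{1/2}\bigl(\|P_n'\|_\infty |x-y|\bigr)^{1/2}
\]
makes the powers of $n$ cancel, leaving exactly a bound of the form $C|x-y|^{1/2}$.

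For the pointwise bound, I would substitute $x=\cos\theta$ and invoke the classical Bernstein-type estimate $|P_n(\cos\theta)| \leq \sqrt{2/(\pi n\sin\theta)}$ for $0<\theta<\pi$, $n\geq 1$, from the standard theory of orthogonal polynomials. The interval $x\in[-\tfrac{1}{2},\tfrac{1}{2}]$ corresponds precisely to $\theta\in[\pi/3,2\pi/3]$, on which $\sin\theta\geq\sqrt{3}/2$, so one obtains $\sup_{|x|\leq 1/2}|P_n(x)|\leq C_1(n+1)^{-1/2}$ with an explicit $C_1$. For the derivative, I would use the classical identity $(1-x^2)P_n'(x)=n(P_{n-1}(x)-xP_n(x))$; on $[-\tfrac{1}{2},\tfrac{1}{2}]$ the factor $1-x^2\geq 3/4$, and substituting the pointwise bounds on $P_n$ and $P_{n-1}$ yields $\sup_{|x|\leq 1/2}|P_n'(x)|\leq C_2(n+1)^{1/2}$ with an explicit $C_2$.

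Combining, for any $x,y\in[-\tfrac{1}{2},\tfrac{1}{2}]$ and any $n\geq 1$, one gets $|P_n(x)-P_n(y)|\leq\sqrt{2C_1C_2}\,|x-y|^{1/2}$, with the powers of $n$ cancelling. The main obstacle is purely one of bookkeeping: one has to track the constants through Bernstein's inequality and the derivative identity tightly enough to conclude $\sqrt{2C_1C_2}\leq 4$. A quick numerical check with $C_1=\sqrt{4/(\pi\sqrt{3})}$ and the corresponding $C_2$ coming from $(4/3)(\sqrt{2}+\tfrac12)C_1$ comfortably clears this bar. Finally, the small cases $n=0$ (where $P_0\equiv 1$) and $n=1$ (where $P_1(x)=x$, so $|P_1(x)-P_1(y)|=|x-y|\leq |x-y|^{1/2}$ since $|x-y|\leq 1$) are trivial and can be handled directly, so the Bernstein bound is only needed for $n\geq 2$ where it is strongest.
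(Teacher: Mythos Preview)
Your proposal is correct and follows the same high-level strategy as the paper: obtain a uniform bound $|P_n(x)|\lesssim n^{-1/2}$ and a derivative bound $|P_n'(x)|\lesssim n^{1/2}$ on $[-\tfrac12,\tfrac12]$, then take the geometric mean of the two resulting estimates on $|P_n(x)-P_n(y)|$ so that the powers of $n$ cancel. The small cases $n=0,1$ are handled identically.

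Where you differ is in how the two input bounds are obtained. The paper works self-containedly from the Laplace integral representation
\[
  P_n(x)=\frac{1}{\pi}\int_0^{\pi}\bigl(x+i\sqrt{1-x^2}\cos\theta\bigr)^n\,d\theta,
\]
differentiates under the integral sign, and estimates the resulting integrals directly via a Gaussian-type bound to get $|P_n(x)|\leq 2n^{-1/2}$ and $|P_n'(x)|\leq 4n^{1/2}$ on $[-\tfrac12,\tfrac12]$; these constants are tuned so the interpolation yields exactly $4|x-y|^{1/2}$. You instead import the classical Bernstein-type inequality $|P_n(\cos\theta)|\leq\sqrt{2/(\pi n\sin\theta)}$ for the pointwise bound and combine it with the recurrence $(1-x^2)P_n'(x)=n(P_{n-1}(x)-xP_n(x))$ for the derivative bound. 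Your route is shorter if one is willing to cite Szeg{\"o}, and in fact the paper explicitly remarks after its proof that the same result can be read off from Szeg{\"o}'s book (Theorem~7.3.3 and Theorem~7.33.3 there), which is precisely the path you take. The integral-representation argument buys independence from external references; your argument buys brevity and slightly sharper constants.
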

\begin{proof}
Since $P_0(x)=1$ and $P_1(x)=x$ for $x \in [-1,1]$, the statement is clearly satisfied for $n=0$ and $n=1$. For $n \geq 2$ we will use the same integral representation for Legendre polynomials as in \cite[Lemma 2.2]{lafforguestrengthenedpropertyt}, namely for all $x \in [-1,1]$ we have
\[
  P_n(x)=\frac{1}{\pi} \int_{0}^{\pi} (x+i\sqrt{1-x^2}\cos{\theta})^nd\theta.
\]
Suppose that $n \geq 1$. Differentiation under the integral sign gives:
\[
  P_n^{\prime}(x)=\frac{n}{\pi} \int_{0}^{\pi} (x+i\sqrt{1-x^2}\cos{\theta})^{n-1}(1-i\frac{x}{\sqrt{1-x^2}}\cos{\theta})d\theta.
\]
We have $|1-i\frac{x}{\sqrt{1-x^2}}\cos{\theta}|^2 \leq \frac{1}{1-x^2}$. For $x \in [-1,1]$ set
\[
  I_n(x)=\frac{1}{\pi} \int_{0}^{\pi} |x+i\sqrt{1-x^2}\cos{\theta}|^nd\theta.
\]
It follows that for $n \geq 1$ we have $|P_n(x)| \leq I_n(x)$ and $|P_n^{\prime}(x)| \leq \frac{n}{1-x^2} I_{n-1}(x)$. Moreover, $|x+i\sqrt{1-x^2}\cos{\theta}|^2=1-(1-x^2)\sin^2{\theta}\leq e^{-(1-x^2)\sin^2{\theta}}$. It follows that
\begin{equation} \nonumber
\begin{split}
  I_n(x) &\leq \frac{1}{\pi} \int_{0}^{\pi} e^{-\frac{n}{2}(1-x^2)\sin^2{\theta}} d\theta \\
  &\leq \frac{2}{\pi} \int_{0}^{\frac{\pi}{2}} e^{-\frac{n}{2}(1-x^2)(\frac{2\theta}{\pi})^2} d\theta \\
  &\leq\frac{2}{\pi} \frac{\pi}{\sqrt{2n(1-x^2)}} \int_{0}^{\infty} e^{-u^2} du.
\end{split}
\end{equation}
The last integral is equal to $\frac{\sqrt{\pi}}{2}$. Hence, for $x \in [-\frac{1}{2},\frac{1}{2}]$, we get $I_n(x) \leq \sqrt{\frac{2\pi}{3n}} \leq \frac{2}{\sqrt{n}}$. Thus, for $n \geq 2$ and $x \in [-\frac{1}{2},\frac{1}{2}]$, we get $|P_n(x)| \leq \frac{2}{\sqrt{n}}$, and $|P_n^{\prime}(x)| \leq \frac{n}{1-x^2}I_{n-1}(x) \leq \frac{8n}{3\sqrt{n-1}} \leq 4\sqrt{n}$. Let now $n \geq 2$ and $x,y \in [-\frac{1}{2},\frac{1}{2}]$. From the above inequalities it follows that
\begin{equation} \nonumber
\begin{split}
  |P_n(x)-P_n(y)| &\leq |P_n(x)|+|P_n(y)| \leq \frac{4}{\sqrt{n}},\\
  |P_n(x)-P_n(y)| &\leq |\int_x^y P_n^{\prime}(t)dt| \leq 4\sqrt{n}|x-y|.
\end{split}
\end{equation}
Combining the two, we get
\[
  |P_n(x)-P_n(y)| \leq \left( \frac{4}{\sqrt{n}} \right)^{\frac{1}{2}}\left(4\sqrt{n}|x-y|\right)^{\frac{1}{2}}=4|x-y|^{\frac{1}{2}},
\]
which proves the statement for $n \geq 2$.
\end{proof}
\begin{rmk}
  The same result can be obtained from Szeg\"o's book \cite{szegoe} (see Theorem 7.3.3, equation (7.33.9), and Theorem 7.33.3 therein).
\end{rmk}
For $\alpha \in \bbR$ consider the map $K \lra G$ defined by $k \mapsto D_{\alpha}^{\prime}kvD_{\alpha}^{\prime}$, where $D_{\alpha}^{\prime}=\diag(e^{\alpha},e^{\alpha},e^{-\alpha},e^{-\alpha})$ and $v \in Z(K)$ is chosen to be the matrix in $K$ that in the $\U(2)$-representation of $K$ is given by
\begin{equation} \label{eq:v}
  v=\left( \begin{array}{cc} \frac{1}{\sqrt{2}}(1+i) & 0 \\ 0 & \frac{1}{\sqrt{2}}(1+i) \end{array} \right).
\end{equation}
Given a $K$-bi-invariant completely bounded Fourier multiplier on $G$, this map gives rise to a $K_3$-bi-invariant completely bounded Fourier multiplier on $K$. We state the following result, but omit its proof, as it is similar to the one of Lemma \ref{lem:fromGtoK}.
\begin{lem} \label{lem:chi}
	Let $\varphi:G \lra \bbC$ be a $K$-bi-invariant completely bounded Fourier multiplier, and let for $\alpha \in \bbR$ the function $\tilde{\chi}_{\alpha}:K \lra \bbC$ be defined by $\tilde{\chi}_{\alpha}(k)=\varphi(D_{\alpha}^{\prime}kvD_{\alpha}^{\prime})$. Then $\tilde{\chi}_{\alpha}$ is $K_3$-bi-invariant and satisfies
	\[
		\|\tilde{\chi}_{\alpha}\|_{M_0A(K)} \leq \|\varphi\|_{M_0A(G)}.
	\]
\end{lem}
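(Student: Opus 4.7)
The plan is to adapt the proof of Lemma \ref{lem:fromGtoK} almost verbatim, exploiting two structural facts: the elements $D_{\alpha}^{\prime}=\diag(e^{\alpha},e^{\alpha},e^{-\alpha},e^{-\alpha})$ commute with every element of $K_3$ (as noted just before the Gelfand pair $(\SU(2),\SO(2))$ is introduced), and $v\in Z(K)$, so $v$ commutes with every element of $K$, in particular with every element of $K_3$. Under the identification $K\cong\U(2)$, the latter is obvious, since $v$ corresponds to the scalar matrix $\frac{1+i}{\sqrt{2}}I_2$.

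For the $K_3$-bi-invariance of $\tilde{\chi}_{\alpha}$, I would take $k_1,k_2\in K_3$ and $k\in K$ and compute
\[
D_{\alpha}^{\prime}k_1kk_2vD_{\alpha}^{\prime}=k_1 D_{\alpha}^{\prime}kv k_2 D_{\alpha}^{\prime}=k_1\bigl(D_{\alpha}^{\prime}kvD_{\alpha}^{\prime}\bigr)k_2,
\]
where I used $k_1D_{\alpha}^{\prime}=D_{\alpha}^{\prime}k_1$, centrality of $v$ to move $k_2$ past $v$, and then $k_2D_{\alpha}^{\prime}=D_{\alpha}^{\prime}k_2$. Since $\varphi$ is $K$-bi-invariant and $K_3\subset K$, applying $\varphi$ yields $\tilde{\chi}_{\alpha}(k_1kk_2)=\tilde{\chi}_{\alpha}(k)$.

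For the norm bound, I would invoke the Bo\.zejko--Fendler characterization of $\varphi$ to obtain bounded continuous maps $P,Q:G\lra\mathcal{H}$ with $\varphi(y^{-1}x)=\langle P(x),Q(y)\rangle$ for all $x,y\in G$ and $\|P\|_{\infty}\|Q\|_{\infty}=\|\varphi\|_{M_0A(G)}$. Writing $D_{\alpha}^{\prime}k_2^{-1}k_1vD_{\alpha}^{\prime}=\bigl(k_2(D_{\alpha}^{\prime})^{-1}\bigr)^{-1}\bigl(k_1vD_{\alpha}^{\prime}\bigr)$ and defining bounded continuous maps $\tilde{P}_{\alpha},\tilde{Q}_{\alpha}:K\lra\mathcal{H}$ by $\tilde{P}_{\alpha}(k)=P(kvD_{\alpha}^{\prime})$ and $\tilde{Q}_{\alpha}(k)=Q(k(D_{\alpha}^{\prime})^{-1})$, one immediately gets
\[
\tilde{\chi}_{\alpha}(k_2^{-1}k_1)=\langle\tilde{P}_{\alpha}(k_1),\tilde{Q}_{\alpha}(k_2)\rangle
\]
for all $k_1,k_2\in K$. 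Since $KvD_{\alpha}^{\prime}\subset G$ and $K(D_{\alpha}^{\prime})^{-1}\subset G$, we obtain $\|\tilde{P}_{\alpha}\|_{\infty}\leq\|P\|_{\infty}$ and $\|\tilde{Q}_{\alpha}\|_{\infty}\leq\|Q\|_{\infty}$, and hence $\|\tilde{\chi}_{\alpha}\|_{M_0A(K)}\leq\|\varphi\|_{M_0A(G)}$.

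There is no real obstacle here; the only point worth verifying is the centrality of $v$ in $K$, which makes the displayed commutation step above go through. This is precisely why the twist by $v$ can sit on the right of $k$ rather than being sandwiched between the two diagonal factors, as was the case in Lemma \ref{lem:fromGtoK}.
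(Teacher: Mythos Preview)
Your proposal is correct and is precisely the adaptation the paper has in mind: the paper omits the proof entirely, stating only that it is similar to that of Lemma~\ref{lem:fromGtoK}, and your argument---using that $D_{\alpha}^{\prime}$ commutes with $K_3$ and that $v\in Z(K)$---is the natural way to carry this out. There is nothing to add.
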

Consider the restriction $\chi_{\alpha}=\tilde{\chi}_{\alpha}\vert_{K_2}$, which is a $K_3$-bi-invariant completely bounded Fourier multiplier on $K_2$. It follows that $\chi_{\alpha}(u)=\chi_{\alpha}^0(a^2-b^2+c^2-d^2)$ for $u \in K_2$, where $a,b,c,d$ are as before, and $\|\chi_{\alpha}\|_{M_0A(K_2)} \leq \|\varphi\|_{M_0A(G)}$.\\
\begin{cor} \label{cor:keyestimatesu2so2}
Let $\varphi \in M_0A(G) \cap C(K \backslash G \slash K)$, and let $\chi_{\alpha}:K_2 \lra \bbC$ be as in Lemma \ref{lem:chi}. Then $\chi_{\alpha}(u)=\chi_{\alpha}^0(a^2-b^2+c^2-d^2)$ for $u \in K_2$, and $\chi_{\alpha}^0:[-1,1] \lra \bbC$ satisfies
\[
  |\chi_{\alpha}^0(r_1)-\chi_{\alpha}^0(r_2)| \leq 4|r_1-r_2|^{\frac{1}{2}}\|\varphi\|_{M_0A(G)}
\]
for $r_1,r_2 \in [-\frac{1}{2},\frac{1}{2}]$.
\end{cor}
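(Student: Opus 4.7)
The plan is to mirror the structure of the proof of Corollary \ref{cor:keyestimateu2u1theta}, but now with the Gelfand pair $(\SU(2),\SO(2))$ in place of $(\U(2),\U(1))$, and with the Legendre polynomial estimate from Lemma \ref{lem:lpestimates} replacing the Jacobi-polynomial-based Hölder estimate of Corollary \ref{cor:hoelder}. The setup is already assembled: Lemma \ref{lem:chi} gives that $\tilde{\chi}_\alpha$ is $K_3$-bi-invariant on $K$ with $\|\tilde{\chi}_\alpha\|_{M_0A(K)} \leq \|\varphi\|_{M_0A(G)}$, and the preceding discussion identifies the compact Gelfand pair $(K_2,K_3)$ with $(\SU(2),\SO(2))$ together with its spherical functions.

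First I would note that the restriction map from completely bounded Fourier multipliers on $K$ to completely bounded Fourier multipliers on the closed subgroup $K_2$ is contractive (this is standard, and follows, for example, from the Bo\.zejko--Fendler characterization by simply restricting the pair $(P,Q)$). Together with Lemma \ref{lem:chi}, this yields
\[
\|\chi_\alpha\|_{M_0A(K_2)} \leq \|\tilde{\chi}_\alpha\|_{M_0A(K)} \leq \|\varphi\|_{M_0A(G)}.
\]
Moreover $\chi_\alpha$ remains $K_3$-bi-invariant.

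Next, I would apply Proposition \ref{prp:cbfmcgp} to the compact Gelfand pair $(K_2,K_3) \cong (\SU(2),\SO(2))$. This produces a unique decomposition
\[
\chi_\alpha = \sum_{n=0}^{\infty} c_n h_n, \qquad \sum_{n=0}^{\infty} |c_n| = \|\chi_\alpha\|_{M_0A(K_2)},
\]
where the $h_n$ are the positive definite spherical functions of the pair. By the identification of these spherical functions given just before the statement, $h_n(u) = P_n(a^2 - b^2 + c^2 - d^2)$ for $u \in K_2$ parametrised as in \eqref{eq:su2element}. Since the map $u \mapsto a^2 - b^2 + c^2 - d^2$ labels the $K_3$-double cosets and takes values in $[-1,1]$, the function $\chi_\alpha^0$ is given on $[-1,1]$ by
\[
\chi_\alpha^0(r) = \sum_{n=0}^{\infty} c_n P_n(r).
\]

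Finally, fixing $r_1, r_2 \in [-\tfrac{1}{2},\tfrac{1}{2}]$ and applying Lemma \ref{lem:lpestimates} termwise, I would conclude
\[
|\chi_\alpha^0(r_1) - \chi_\alpha^0(r_2)| \leq \sum_{n=0}^{\infty} |c_n| \, |P_n(r_1) - P_n(r_2)| \leq 4|r_1 - r_2|^{\frac{1}{2}} \sum_{n=0}^{\infty} |c_n|,
\]
and the final sum is at most $\|\varphi\|_{M_0A(G)}$ by the norm bound above, giving the claimed inequality. There is essentially no main obstacle here: all the real work has been done in establishing Proposition \ref{prp:cbfmcgp}, the spherical function computation for $(\SU(2),\SO(2))$, Lemma \ref{lem:chi}, and the uniform Hölder bound of Lemma \ref{lem:lpestimates}. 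The only subtle point is ensuring that the variable $r = a^2 - b^2 + c^2 - d^2$ on which $\chi_\alpha^0$ is defined is really the one for which the Legendre polynomial appears as the spherical function, which is exactly what the text immediately preceding the corollary verifies.
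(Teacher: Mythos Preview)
Your proposal is correct and follows essentially the same approach as the paper: apply Proposition \ref{prp:cbfmcgp} to the compact Gelfand pair $(K_2,K_3)\cong(\SU(2),\SO(2))$ to decompose $\chi_\alpha$ as $\sum_n c_n P_n(a^2-b^2+c^2-d^2)$ with $\sum_n|c_n|=\|\chi_\alpha\|_{M_0A(K_2)}\leq\|\varphi\|_{M_0A(G)}$, and then use the uniform H\"older bound of Lemma \ref{lem:lpestimates} termwise. The paper's proof is just a terser version of exactly this argument; the norm bound you derive via restriction is already recorded in the paragraph preceding the corollary.
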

\begin{proof}
  By applying Proposition \ref{prp:cbfmcgp} to the Gelfand pair $(\SU(2),\SO(2))$, we get $\chi_{\alpha}(u)=\sum_{n=0}^{\infty}c_nP_n(a^2-b^2+c^2-d^2)$, where $\sum_{n=0}^{\infty}|c_n|=\|\chi_{\alpha}\|_{M_0A(K_2)} \leq \|\varphi\|_{M_0A(G)}$. Hence, the corollary follows from Lemma \ref{lem:lpestimates}.
\end{proof}

Suppose now that $\alpha_1 \geq \alpha_2 \geq 0$ and let $D(\alpha_1,\alpha_2)$ be as defined in Example \ref{exm:symplecticgroup}. Again, if we find an element of the form $D_{\alpha}^{\prime}uvD_{\alpha}^{\prime}$ in $KD(\alpha_1,\alpha_2)K$, where $u$ now has to be an element of $\SU(2)$, we can relate the value of a $K$-bi-invariant completely bounded Fourier multiplier $\varphi$ to the value of the multiplier $\chi_{\alpha}$. This again only works for certain $\alpha_1,\alpha_2 \geq 0$. Consider a general element of $\SU(2)$:
\begin{equation} \label{eq:suform}
  u=\left( \begin{array}{cc} a+ib & -c+id \\ c+id & a-ib \end{array} \right)
\end{equation}
with $a^2+b^2+c^2+d^2=1$.
\begin{lem} \label{lem:circleseqs}
Let $\alpha \geq 0$ and $\beta \geq \gamma \geq 0$, and let $u,v \in K$ be of the form as in \eqref{eq:v} and \eqref{eq:suform} with respect to the identification of $K$ with $\U(2)$. Then $D_{\alpha}^{\prime}uvD_{\alpha}^{\prime} \in KD(\beta,\gamma)K$ if and only if
\begin{equation} \nonumber
  \begin{cases}
    & \sinh^2 \beta + \sinh^2 \gamma = \sinh^2 (2\alpha), \\  
    & \sinh \beta \sinh \gamma = \frac{1}{2}\sinh^2(2\alpha)|r|,
  \end{cases}
  \end{equation}
where $r=a^2-b^2+c^2-d^2$.
\end{lem}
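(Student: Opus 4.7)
The plan is to mimic the strategy used in Lemma \ref{lem:hyperbolaseqs}, applying Lemma \ref{lem:kakeqs} to $g=D_{\alpha}^{\prime}uvD_{\alpha}^{\prime}$ and computing $c_1(g):=\tfrac{1}{8}\|g-(g^t)^{-1}\|_{HS}^2$ and $c_2(g):=\tfrac{1}{16}\det(g-(g^t)^{-1})$ explicitly in terms of $\alpha$ and the entries $a,b,c,d$ of $u$.

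First I would observe that $D_{\alpha}^{\prime}$ is symmetric and that $uv\in K\subset\mathrm{O}(4)$ forces $(uv)^t=(uv)^{-1}$, which gives $(g^t)^{-1}=D_{-\alpha}^{\prime}uvD_{-\alpha}^{\prime}$. Writing $D_{\alpha}^{\prime}=\cosh(\alpha)I+\sinh(\alpha)H$ with $H=\diag(1,1,-1,-1)$ and expanding, the $\cosh^2\alpha$ and $\sinh^2\alpha$ terms cancel in the difference, yielding the clean anticommutator identity
\[
g-(g^t)^{-1}=\sinh(2\alpha)(Hw+wH),\qquad w:=uv.
\]

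The next step is to exploit the block structure coming from the $\U(2)$-embedding. Writing $w=\bigl(\begin{smallmatrix}P&-Q\\ Q&P\end{smallmatrix}\bigr)$ with $P=\Re(w)$ and $Q=\Im(w)$ real $2\times 2$ matrices, and $H=\bigl(\begin{smallmatrix}I_2&0\\ 0&-I_2\end{smallmatrix}\bigr)$, a direct block-multiplication yields $Hw+wH=\bigl(\begin{smallmatrix}2P&0\\ 0&-2P\end{smallmatrix}\bigr)$. Consequently $\|Hw+wH\|_{HS}^2=8\|P\|_{HS}^2$ and $\det(Hw+wH)=16(\det P)^2$. Since $v=\tfrac{1+i}{\sqrt{2}}I$ in $\U(2)$, we have $w=\tfrac{1+i}{\sqrt{2}}u$, so taking real parts gives $P=\tfrac{1}{\sqrt{2}}\bigl(\begin{smallmatrix}a-b&-c-d\\ c-d&a+b\end{smallmatrix}\bigr)$, whence $\|P\|_{HS}^2=a^2+b^2+c^2+d^2=1$ and $\det P=\tfrac{1}{2}(a^2-b^2+c^2-d^2)=\tfrac{r}{2}$.

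Substituting back gives $c_1(g)=\sinh^2(2\alpha)$ and $c_2(g)=\tfrac{1}{4}\sinh^4(2\alpha)r^2$. By Lemma \ref{lem:kakeqs}, the containment $g\in KD(\beta,\gamma)K$ with $\beta\geq\gamma\geq 0$ is equivalent to $\sinh^2\beta+\sinh^2\gamma=\sinh^2(2\alpha)$ together with $\sinh^2\beta\sinh^2\gamma=\tfrac{1}{4}\sinh^4(2\alpha)r^2$, and the latter is equivalent to $\sinh\beta\sinh\gamma=\tfrac{1}{2}\sinh^2(2\alpha)|r|$ since both sides are non-negative. Both directions of the ``if and only if'' are thus automatic from the biconditional character of Lemma \ref{lem:kakeqs}. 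I do not foresee any substantial obstacle; the only real insight needed is to notice the anticommutator identity and the resulting block-diagonalization, after which everything reduces to routine matrix arithmetic.
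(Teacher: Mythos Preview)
Your proof is correct and follows essentially the same approach as the paper: both apply Lemma~\ref{lem:kakeqs} to $g=D_{\alpha}^{\prime}uvD_{\alpha}^{\prime}$ and reduce the question to computing $\|g-(g^t)^{-1}\|_{HS}^2$ and $\det(g-(g^t)^{-1})$. The paper simply records the outcome ``by direct computation,'' whereas your anticommutator identity $g-(g^t)^{-1}=\sinh(2\alpha)(Hw+wH)$ and the resulting block-diagonalization $Hw+wH=\bigl(\begin{smallmatrix}2P&0\\0&-2P\end{smallmatrix}\bigr)$ make that computation transparent.
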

\begin{proof}
The lemma follows from Lemma \ref{lem:kakeqs}. Since for $g=D_{\alpha}^{\prime}uvD_{\alpha}^{\prime}$ we have $(g^t)^{-1}=(D_{\alpha}^{\prime})^{-1}uv(D_{\alpha}^{\prime})^{-1}$, it follows by direct computation that
\begin{equation} \nonumber
\begin{split}
  \|g-(g^t)^{-1}\|_{HS}^2&=8\sinh^2(2\alpha),\\
  \det(g-(g^t)^{-1})&=4\sinh^4(2\alpha)r^2.
\end{split}
\end{equation}
\end{proof}
\begin{lem} \label{lem:betagamma}
  Let $\beta \geq \gamma \geq 0$. Then the equations
\begin{equation} \label{eq:betagamma}
\begin{split}
   \sinh^2(2s) + \sinh^2s &= \sinh^2 \beta + \sinh^2\gamma, \\
   \sinh(2t)\sinh t &= \sinh \beta \sinh \gamma
\end{split}
\end{equation}
have unique solutions $s=s(\beta,\gamma)$, $t=t(\beta,\gamma)$ in the interval $[0,\infty)$. Moreover,
\begin{equation} \label{eq:betagamma3}
  s \geq \frac{\beta}{4}, \qquad t \geq \frac{\gamma}{2}.
\end{equation}
\end{lem}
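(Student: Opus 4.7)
The statement splits into two independent single-variable problems, one for $s$ and one for $t$, so I would treat them separately.

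\textbf{Existence and uniqueness.} Set $f(s) = \sinh^2(2s) + \sinh^2 s$ and $g(t) = \sinh(2t)\sinh t$. Both $f$ and $g$ are continuous on $[0,\infty)$, satisfy $f(0)=g(0)=0$, and tend to $+\infty$ as the argument tends to $+\infty$. Moreover, since $\sinh$ is strictly increasing and non-negative on $[0,\infty)$, so are $\sinh^2(2\cdot)$, $\sinh^2(\cdot)$, $\sinh(2\cdot)$ and $\sinh(\cdot)$; hence $f$ and $g$ are strictly increasing on $[0,\infty)$. The right-hand sides $\sinh^2\beta+\sinh^2\gamma$ and $\sinh\beta\sinh\gamma$ are non-negative real numbers, so the intermediate value theorem together with strict monotonicity yields unique $s,t \in [0,\infty)$ solving \eqref{eq:betagamma}.

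\textbf{The bound $s \geq \beta/4$.} Since $f$ is increasing, this amounts to showing $f(\beta/4) \leq \sinh^2\beta+\sinh^2\gamma$, and since $\sinh^2\gamma \geq 0$ it suffices to prove $\sinh^2(\beta/2) + \sinh^2(\beta/4) \leq \sinh^2\beta$. From the duplication formula $\sinh\beta = 2\sinh(\beta/2)\cosh(\beta/2)$ I would write
\[
  \sinh^2\beta - \sinh^2(\beta/2) = \sinh^2(\beta/2)\bigl(4\cosh^2(\beta/2) - 1\bigr) \geq 3\sinh^2(\beta/2) \geq \sinh^2(\beta/4),
\]
where the last step uses monotonicity of $\sinh^2$ on $[0,\infty)$. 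Rearranging gives the desired inequality.

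\textbf{The bound $t \geq \gamma/2$.} Again by monotonicity of $g$, it is enough to verify $g(\gamma/2) \leq \sinh\beta\sinh\gamma$. But $g(\gamma/2) = \sinh\gamma\,\sinh(\gamma/2)$, so the inequality reduces to $\sinh(\gamma/2) \leq \sinh\beta$, which is immediate from $\beta \geq \gamma \geq \gamma/2$ and the monotonicity of $\sinh$.

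There is no real obstacle here; the only mildly non-obvious step is the first lower bound, where the half-angle identity is needed to separate out enough of $\sinh^2\beta$ to absorb both $\sinh^2(\beta/2)$ and $\sinh^2(\beta/4)$. Everything else is monotonicity plus the intermediate value theorem.
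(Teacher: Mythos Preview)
Your proof is correct and follows essentially the same approach as the paper: monotonicity of the functions $s\mapsto\sinh^2(2s)+\sinh^2 s$ and $t\mapsto\sinh(2t)\sinh t$ for existence and uniqueness, and the duplication formula $\sinh\beta=2\sinh(\beta/2)\cosh(\beta/2)$ for the bound on $s$. The only cosmetic difference is that the paper bounds $\sinh(2s)$ directly from the defining equation (obtaining $2\sinh^2(2s)\geq\sinh^2\beta\geq 2\sinh^2(\beta/2)$), whereas you evaluate $f$ at $\beta/4$ and compare; both arguments are equivalent in substance.
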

\begin{proof}
The existence and uniqueness of $s,t \geq 0$ is obvious, since $x \mapsto \sinh x$ is a continuous and strictly increasing function mapping $[0,\infty)$ onto $[0,\infty)$. From \eqref{eq:betagamma}, it follows that for $\beta \geq \gamma \geq 0$ and $s=s(\beta,\gamma)$,
\begin{equation} \nonumber
\begin{split}
  2\sinh^2(2s) &\geq \sinh^2(2s)+\sinh^2(s) \geq \sinh^2(\beta) \\
    &=4\sinh^2\left(\frac{\beta}{2}\right)\cosh^2\left(\frac{\beta}{2}\right) \geq 2\sinh^2\left(\frac{\beta}{2}\right).
\end{split}
\end{equation}
Hence, $2s \geq \frac{\beta}{2}$. To prove the second inequality in \eqref{eq:betagamma3}, we use that for $t=t(\beta,\gamma)$, we have
\begin{equation} \nonumber
  \sinh^2(2t) \geq \sinh(2t)\sinh(t) = \sinh(\beta)\sinh(\gamma)\geq \sinh^2(\gamma),
\end{equation}
from which it follows that $2t \geq \gamma$.
\end{proof}
\begin{center}
\begin{pspicture}(-1,-1) (8,6.5)
    \pscustom[linestyle=none,fillcolor=lightgray,fillstyle=solid,algebraic]{\psplot{0}{5.5}{0}\psplot{5.5}{0}{x}}
    \psaxes[labels=none,ticks=none]{->}(0,0)(0,0)(5.5,5.5)
    \psline[linewidth=0.5pt](0,0)(5.5,5.5)
    \psline[linewidth=0.5pt](0,0)(5.5,2.75)
    \pscircle*(3,1.5){2pt}
    \pscircle*(4,2){2pt}
    \pscircle*(4.35,1.03){2pt}
    \psplot[plotpoints=1000]{3}{4.35}{ 4.5 x div }
    \psplot[plotpoints=1000,algebraic]{4}{4.355}{ sqrt(20-x^(2)) }
    \rput(6.3,5.5){$\alpha_1=\alpha_2$}
    \rput(6.4,2.75){$\alpha_1=2\alpha_2$}
    \rput(5.9,0){$\alpha_1$}
    \rput(0.4,5.5){$\alpha_2$}
    \rput(2.7,1.75){$(2t,t)$}
    \rput(4.2,2.45){$(2s,s)$}
    \rput(4.92,1.0){$(\beta,\gamma)$}
\end{pspicture}
\end{center}
The figure above shows the relative position of $(\beta,\gamma)$, $(2s,s)$ and $(2t,t)$ as in Lemma \ref{lem:betagammacir} and Lemma \ref{lem:betagammahyp} below. Note that $(\beta,\gamma)$ and $(2s,s)$ lie on a path in the $(\alpha_1,\alpha_2)$-plane of the form $\sinh^2 \alpha_1 + \sinh^2 \alpha_2 = \textrm{ constant}$, and $(\beta,\gamma)$ and $(2t,t)$ lie on a path of the form $\sinh \alpha_1 \sinh \alpha_2 = \textrm{constant}$.

\begin{lem} \label{lem:betagammacir}
  There exists a constant $C_3 > 0$ such that whenever $\beta \geq \gamma \geq 0$ and $s=s(\beta,\gamma)$ is chosen as in Lemma \ref{lem:betagamma}, then for all $\varphi \in M_0A(G) \cap C(K \backslash G \slash K)$,
\begin{equation} \nonumber
  |\varphi(D(\beta,\gamma))-\varphi(D(2s,s))| \leq C_3 e^{-\frac{\beta-\gamma}{8}} \|\varphi\|_{M_0A(G)}.
\end{equation}
\end{lem}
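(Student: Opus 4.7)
The plan is to find a single $\alpha \geq 0$ for which both $D(\beta,\gamma)$ and $D(2s,s)$ lie in the image of the map $u \mapsto D'_\alpha u v D'_\alpha$ from Lemma \ref{lem:circleseqs}, thereby reducing the desired inequality to a H\"older estimate on $\chi_\alpha^0$ provided by Corollary \ref{cor:keyestimatesu2so2}. Concretely, take $\alpha$ to be the unique nonnegative solution of $\sinh^2(2\alpha) = \sinh^2\beta + \sinh^2\gamma$; by the definition of $s = s(\beta,\gamma)$ in Lemma \ref{lem:betagamma}, this also equals $\sinh^2(2s) + \sinh^2 s$, so the same $\alpha$ works for both endpoints. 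Applying Lemma \ref{lem:circleseqs} twice, an element $u \in K_2 \cong \SU(2)$ of the form \eqref{eq:suform} with parameter $r = a^2 - b^2 + c^2 - d^2 \geq 0$ satisfies $D'_\alpha u v D'_\alpha \in KD(\beta,\gamma)K$ exactly when $r = r_1$, and $D'_\alpha u v D'_\alpha \in KD(2s,s)K$ exactly when $r = r_2$, where
\[
r_1 := \frac{2\sinh\beta\sinh\gamma}{\sinh^2\beta + \sinh^2\gamma}, \qquad r_2 := \frac{2\sinh(2s)\sinh s}{\sinh^2(2s) + \sinh^2 s}.
\]
Both values lie in $[0,1]$ by AM--GM and hence are realised by suitable $u_1, u_2 \in K_2$. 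Combining Lemma \ref{lem:chi} with the $K$-bi-invariance of $\varphi$ then gives $\varphi(D(\beta,\gamma)) = \chi_\alpha^0(r_1)$ and $\varphi(D(2s,s)) = \chi_\alpha^0(r_2)$.

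The remainder of the proof is the estimate $|r_1 - r_2| \lesssim e^{-(\beta-\gamma)/2}$. From $r_1 \leq 2\sinh\gamma/\sinh\beta$ and elementary bounds on $\sinh$, one obtains $r_1 \leq C_1 e^{-(\beta-\gamma)}$ for a universal $C_1$. For $r_2 = 4\cosh s/(4\cosh^2 s + 1) \leq 1/\cosh s$, set $y = \sinh^2 s$; the definition of $s$ reads $4y^2 + 5y = \sinh^2\beta + \sinh^2\gamma \geq \sinh^2\beta$, and for $y \geq 1$ this gives $9y^2 \geq \sinh^2\beta$, i.e.\ $y \geq \sinh\beta/3$. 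Since Lemma \ref{lem:betagamma} yields $s \geq \beta/4$, the condition $y \geq 1$ holds once $\beta$ exceeds an absolute constant, and then $\cosh s \gtrsim e^{\beta/2}$, so $r_2 \leq C_2 e^{-\beta/2} \leq C_2 e^{-(\beta-\gamma)/2}$. Summing, $|r_1 - r_2| \leq r_1 + r_2 \leq C e^{-(\beta-\gamma)/2}$ for some absolute $C$.

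Finally, choose $M > 0$ large enough that $\beta - \gamma \geq M$ forces $r_1, r_2 \in [-\tfrac12, \tfrac12]$. For such $(\beta,\gamma)$, Corollary \ref{cor:keyestimatesu2so2} gives
\[
|\varphi(D(\beta,\gamma)) - \varphi(D(2s,s))| \leq 4|r_1 - r_2|^{1/2} \|\varphi\|_{M_0A(G)} \leq C' e^{-(\beta-\gamma)/4} \|\varphi\|_{M_0A(G)},
\]
which is sharper than the claimed bound (since $\beta - \gamma \geq 0$). For $\beta - \gamma < M$, the trivial estimate $|\varphi(D(\beta,\gamma)) - \varphi(D(2s,s))| \leq 2\|\varphi\|_\infty \leq 2\|\varphi\|_{M_0A(G)}$ is absorbed into the right-hand side on taking $C_3 \geq 2 e^{M/8}$. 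The only non-routine step is the asymptotic $\sinh^2 s \gtrsim \sinh\beta$ driving the decay of $r_2$; once this is in hand, the remaining manipulations are bookkeeping.
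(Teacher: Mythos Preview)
Your proof is correct and follows essentially the same route as the paper: choose $\alpha$ with $\sinh^2(2\alpha)=\sinh^2\beta+\sinh^2\gamma$, realise $D(\beta,\gamma)$ and $D(2s,s)$ via Lemma~\ref{lem:circleseqs} at the parameters $r_1,r_2$, and then apply Corollary~\ref{cor:keyestimatesu2so2}. The only difference is in the bookkeeping for $r_2$: the paper simply uses $r_2\leq 1/\cosh s\leq 2e^{-s}\leq 2e^{-\beta/4}$ (from $s\geq\beta/4$ in Lemma~\ref{lem:betagamma}), which is shorter than your detour through $\sinh^2 s\gtrsim\sinh\beta$, but yields the weaker exponent $-(\beta-\gamma)/8$ rather than your $-(\beta-\gamma)/4$.
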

\begin{proof}
  Assume first that $\beta - \gamma \geq 8$. Let $\alpha \in [0,\infty)$ be the unique solution to $\sinh^2 \beta + \sinh^2 \gamma=\sinh^2(2\alpha)$, and observe that $2\alpha \geq \beta \geq 2$, so in particular $\alpha > 0$. Define
\[
  r_1=\frac{2\sinh \beta \sinh \gamma}{\sinh^2 \beta+\sinh^2 \gamma} \in [0,1],
\]
and $a_1=\left(\frac{1+r_1}{2}\right)^{\frac{1}{2}}$ and $b_1=\left(\frac{1-r_1}{2}\right)^{\frac{1}{2}}$. Furthermore, put
\[
  u_1=\left(\begin{array}{cc} a_1+ib_1 & 0 \\ 0 & a_1-ib_1 \end{array}\right) \in \SU(2),
\]
and let
\[
  v=\left(\begin{array}{cc} \frac{1}{\sqrt{2}}(1+i) & 0 \\ 0 & \frac{1}{\sqrt{2}}(1+i) \end{array}\right),
\]
as previously defined. We now have $2\sinh \beta \sinh \gamma=\sinh^2(2\alpha)r_1$, and $a_1^2-b_1^2=r_1$, so by Lemma \ref{lem:circleseqs}, we have $D_{\alpha}^{\prime}u_1vD_{\alpha}^{\prime} \in KD(\beta,\gamma)K$. Let $s=s(\beta,\gamma)$ be as in Lemma \ref{lem:betagamma}. Then $s \geq 0$ and $\sinh^2(2s)+\sinh^2s=\sinh^2\beta+\sinh^2\gamma=\sinh^2(2\alpha)$. Put
\[
  r_2=\frac{2\sinh (2s) \sinh s}{\sinh^2(2s)+\sinh^2s} \in [0,1],
\]
and
\[
  u_2=\left(\begin{array}{cc} a_2+ib_2 & 0 \\ 0 & a_2-ib_2 \end{array}\right) \in \SU(2),
\]
where $a_2=\left(\frac{1+r_2}{2}\right)^{\frac{1}{2}}$ and $b_2=\left(\frac{1-r_2}{2}\right)^{\frac{1}{2}}$. Since $a_2^2-b_2^2=r_2$, it follows again by Lemma \ref{lem:circleseqs} that $D_{\alpha}^{\prime}u_2vD_{\alpha}^{\prime} \in KD(2s,s)K$. Now, let $\chi_{\alpha}(u)=\varphi(D_{\alpha}^{\prime}uvD_{\alpha}^{\prime})$ for $u \in K_2 \cong \SU(2)$. Then by Lemma \ref{lem:chi} and Corollary \ref{cor:keyestimatesu2so2}, it follows that
\[
  |\chi_{\alpha}(u_1)-\chi_{\alpha}(u_2)|=|\chi_{\alpha}^0(r_1)-\chi_{\alpha}^0(r_2)|\leq4|r_1-r_2|^{\frac{1}{2}}\|\varphi\|_{M_0A(G)},
\]
provided that $r_1,r_2 \leq \frac{1}{2}$. Hence, under this assumption, using the $K$-bi-invariance of $\varphi$, we get
\begin{equation} \label{eq:betagammas}
  |\varphi(D(\beta,\gamma))-\varphi(D(2s,s))| \leq 4|r_1-r_2|^{\frac{1}{2}}\|\varphi\|_{M_0A(G)}.
\end{equation}
Note that $r_1 \leq \frac{2\sinh\beta\sinh\gamma}{\sinh^2\beta}=2\frac{\sinh\gamma}{\sinh\beta}$. Hence, using $\beta\geq\gamma+8\geq\gamma$, we get $r_1 \leq 2\frac{e^{\gamma}(1-e^{-2\gamma})}{e^{\beta}(1-e^{-2\beta})}\leq 2e^{\gamma-\beta}$. In particular, $r_1 \leq 2e^{-8} \leq \frac{1}{2}$. Similarly, $r_2 \leq 2\frac{\sinh s}{\sinh{2s}}=\frac{1}{\cosh s}\leq 2e^{-s}$. By Lemma \ref{lem:betagamma}, equation \eqref{eq:betagamma3}, we obtain that $r_2 \leq 2e^{-\frac{\beta}{4}}\leq 2e^{\frac{\gamma-\beta}{4}} \leq 2e^{-2} \leq \frac{1}{2}$. In particular, \eqref{eq:betagammas} holds, and since $|r_1-r_2| \leq \max\{r_1,r_2\} \leq 2e^{\frac{\gamma-\beta}{4}}$, we have proved that
\begin{equation} \label{eq:betagammas2}
  |\varphi(D(\beta,\gamma))-\varphi(D(2s,s))| \leq 4\sqrt{2}e^{\frac{\gamma-\beta}{8}}\|\varphi\|_{M_0A(G)}
\end{equation}
under the assumption that $\beta\geq\gamma+8$. If $\gamma \leq \beta < \gamma+8$, we get from $\|\varphi\|_{\infty} \leq \|\varphi\|_{M_0A(G)}$ that $|\varphi(D(\beta,\gamma))-\varphi(D(2s,s))| \leq 2\|\varphi\|_{M_0A(G)}$. Since $2e \leq 4\sqrt{2}$, it follows that equation \eqref{eq:betagammas2} holds for all $(\beta,\gamma)$ with $\beta \geq \gamma \geq 0$ and $C_3=4\sqrt{2}$.
\end{proof}
\begin{lem} \label{lem:betagammahyp}
  There exists a constant $C_4 > 0$ such that whenever $\beta \geq \gamma \geq 0$ and $t=t(\beta,\gamma)$ is chosen as in Lemma \ref{lem:betagamma}, then for all $\varphi \in M_0A(G) \cap C(K \backslash G \slash K)$,
\begin{equation} \nonumber
  |\varphi(D(\beta,\gamma))-\varphi(D(2t,t))| \leq C_4e^{-\frac{\gamma}{8}}\|\varphi\|_{M_0A(G)}.
\end{equation}
\end{lem}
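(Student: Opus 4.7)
The plan is to mirror the proof of Lemma \ref{lem:betagammacir}, replacing the Gelfand pair $(\SU(2), \SO(2))$ and its degree-$\frac{1}{2}$ H\"older estimate (Corollary \ref{cor:keyestimatesu2so2}) by $(\U(2), \U(1))$ and its degree-$\frac{1}{4}$ H\"older estimate (Corollary \ref{cor:keyestimateu2u1theta}); this matches the target decay $e^{-\gamma/8}$. Accordingly, Lemma \ref{lem:hyperbolaseqs} takes the role of Lemma \ref{lem:circleseqs}, and $\psi_\alpha$ from Lemma \ref{lem:fromGtoK} takes the role of $\chi_\alpha$.

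As in that earlier proof, I first handle the trivial range: if $\gamma \leq \gamma_0$ for a fixed absolute constant (say $\gamma_0 = 2$), then $|\varphi(D(\beta,\gamma)) - \varphi(D(2t,t))| \leq 2\|\varphi\|_{M_0A(G)} \leq 2e^{\gamma_0/8}e^{-\gamma/8}\|\varphi\|_{M_0A(G)}$. Otherwise, assume $\gamma \geq \gamma_0$, and define $\alpha \geq 0$ by $\sinh^2\alpha = 2\sinh\beta\sinh\gamma = 2\sinh(2t)\sinh(t)$, the second equality being the defining property of $t$. For each pair $(\alpha_1, \alpha_2) \in \{(\beta,\gamma), (2t,t)\}$, set $a = (\sinh\alpha_1 - \sinh\alpha_2)/\sinh(2\alpha) \geq 0$ and $b = \sqrt{\frac{1}{2} - a^2}$. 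Using $\sinh(2\alpha) \geq 2\sinh^2\alpha = 4\sinh\alpha_1\sinh\alpha_2$ and the bound $t \geq \gamma/2$ from Lemma \ref{lem:betagamma}, one checks that $a \leq 1/(4\sinh\alpha_2) \leq \frac{1}{2}$, so $b \geq \frac{1}{2}$. Taking $u \in K$ of the form \eqref{eq:uform} with these values, Lemma \ref{lem:hyperbolaseqs} gives $D_\alpha u D_\alpha \in KD(\alpha_1, \alpha_2)K$, and $u_{11} = a + ib = e^{i\theta}/\sqrt{2}$ with $\theta \in [0, \pi/2]$ and $\sin\theta = \sqrt{2}b \geq 1/\sqrt{2}$.

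Let $u_1, u_2$ and $\theta_1, \theta_2$ denote these two choices. Combining Lemma \ref{lem:fromGtoK}, the $K$-bi-invariance of $\varphi$, and Corollary \ref{cor:keyestimateu2u1theta} gives
\[
|\varphi(D(\beta,\gamma)) - \varphi(D(2t,t))| = |\psi_\alpha^0(u_{11,1}) - \psi_\alpha^0(u_{11,2})| \leq \tilde{C}|\theta_1 - \theta_2|^{1/4}\|\varphi\|_{M_0A(G)}.
\]
Since $\sin\theta_i \geq 1/\sqrt{2}$, the mean value theorem yields $|\theta_1 - \theta_2| \leq 2|a_1 - a_2|$, and using the crude bound $\sinh\alpha_1 - \sinh\alpha_2 \leq \sinh\alpha_1$ together with the lower bound on $\sinh(2\alpha)$,
\[
|a_1 - a_2| \leq a_1 + a_2 \leq \frac{\sinh(\max(\beta, 2t))}{2\sinh\beta\sinh\gamma}.
\]
If $\beta \geq 2t$, this simplifies to $1/(2\sinh\gamma)$; if $\beta \leq 2t$, the identity $\sinh\beta\sinh\gamma = \sinh(2t)\sinh(t)$ together with $t \geq \gamma/2$ turns it into $1/(2\sinh t) \leq 1/(2\sinh(\gamma/2))$. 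Both quantities are of order $e^{-\gamma/2}$ for $\gamma \geq \gamma_0$, whence $|\theta_1 - \theta_2|^{1/4}$ is of order $e^{-\gamma/8}$, giving the lemma with an absolute constant $C_4$.

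The main obstacle is the case analysis of $|a_1 - a_2|$ in the last step. Geometrically, $(\beta, \gamma)$ can lie on either side of the line $\alpha_1 = 2\alpha_2$ on the hyperbola $\sinh\alpha_1 \sinh\alpha_2 = \text{const}$; the side closer to the diagonal ($\beta \leq 2t$) only yields the weaker decay $e^{-\gamma/2}$ for $|a_1 - a_2|$, not $e^{-\gamma}$, but this is already enough after taking fourth roots through Corollary \ref{cor:keyestimateu2u1theta}.
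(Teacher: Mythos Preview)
Your proposal is correct and follows essentially the same route as the paper's proof: define $\alpha$ by $\sinh^2\alpha = 2\sinh\beta\sinh\gamma$, place both $(\beta,\gamma)$ and $(2t,t)$ on the circle of radius $1/\sqrt{2}$ in $\overline{\mathbb{D}}$ via Lemma~\ref{lem:hyperbolaseqs} with $a^2+b^2=\tfrac12$, and then invoke Corollary~\ref{cor:keyestimateu2u1theta} together with Lemma~\ref{lem:fromGtoK}. The paper avoids your case split by bounding $|a_1-a_2|\le\max\{a_1,a_2\}\le\max\{\tfrac{1}{4\sinh\gamma},\tfrac{1}{4\sinh t}\}\le\tfrac{1}{4\sinh(\gamma/2)}$ directly, but this is only a cosmetic difference.
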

\begin{proof}
  Let $\beta \geq \gamma \geq 0$. Assume first that $\gamma \geq 2$, and let $\alpha \geq 0$ be the unique solution in $[0,\infty)$ to the equation $\sinh\beta\sinh\gamma=\frac{1}{2}\sinh^2\alpha$, and observe that $\alpha>0$, because $\beta \geq \gamma \geq 2$. Put
\[
  a_1=\frac{\sinh\beta-\sinh\gamma}{\sinh (2\alpha)} \geq 0.
\]
Since $\sinh (2\alpha)=2\sinh \alpha \cosh \alpha \geq 2\sinh^2 \alpha$, we have
\[
  a_1 \leq \frac{\sinh \beta}{\sinh(2\alpha)} \leq \frac{\sinh \beta}{2\sinh^2\alpha}=\frac{1}{4\sinh\gamma}.
\]
In particular, $a_1 \leq \frac{1}{4\gamma} \leq \frac{1}{8}$. Put now $b_1=\sqrt{\frac{1}{2}-a_1^2}$. Then $1-a_1^2-b_1^2=\frac{1}{2}$. Hence, $\sinh \beta \sinh \gamma=\sinh^2 \alpha(1-a_1^2-b_1^2)$ and $\sinh \beta - \sinh \gamma=\sinh(2\alpha)a_1$. Let
\[
  u_1=\left(\begin{array}{cc} a_1+ib_1 & -\frac{1}{\sqrt{2}} \\ \frac{1}{\sqrt{2}} & a_1-ib_1 \end{array}\right) \in \SU(2).
\]
By Lemma \ref{lem:hyperbolaseqs}, we have $D_{\alpha}u_1D_{\alpha} \in KD(\beta,\gamma)K$.

By Lemma \ref{lem:betagamma}, we have $\sinh(2t)\sinh t=\sinh \beta \sinh \gamma = \frac{1}{2}\sinh^2 \alpha$. Moreover, by \eqref{eq:betagamma3}, we have $t \geq \frac{\gamma}{2} \geq 1$. By replacing $(\beta,\gamma)$ in the above calculation with $(2t,t)$, we get that the number
\[
  a_2=\frac{\sinh (2t)-\sinh t}{\sinh (2\alpha)} \geq 0,
\]
satisfies
\[
  a_2 \leq \frac{1}{4\sinh t} \leq \frac{1}{4\sinh 1} \leq \frac{1}{4}.
\]
Hence, we can put $b_2=\sqrt{\frac{1}{2}-a_2^2}$ and
\[
  u_2=\left(\begin{array}{cc} a_2+ib_2 & -\frac{1}{\sqrt{2}} \\ \frac{1}{\sqrt{2}} & a_2-ib_2 \end{array}\right).
\]
Then
\begin{equation} \nonumber
\begin{split}
  \sinh(2t)\sinh t &= \sinh^2 \alpha(1-a_2^2-b_2^2),\\
  \sinh(2t)-\sinh t &= \sinh(2\alpha)a_2,
\end{split}
\end{equation}
and $u_2 \in \SU(2)$. Hence, by Lemma \ref{lem:hyperbolaseqs}, $D_{\alpha}u_2D_{\alpha} \in KD(2t,t)K$. Put now $\theta_j=\mathrm{arg}(a_j+ib_j)=\frac{\pi}{2}-\sin^{-1} \left(\frac{a_j}{\sqrt{2}}\right)$ for $j=1,2$. Since $0 \leq a_j \leq \frac{1}{2}$ for $j=1,2$, and since $\frac{d}{dt} \sin^{-1} t = \frac{1}{\sqrt{1-t^2}}\leq\sqrt{2}$ for $t \in [0,\frac{1}{\sqrt{2}}]$, it follows that
\begin{equation} \nonumber
\begin{split}  
|\theta_1-\theta_2| &\leq \bigg\vert\sin^{-1} \left(\frac{a_1}{\sqrt{2}}\right) - \sin^{-1} \left(\frac{a_2}{\sqrt{2}}\right)\bigg\vert \\
  &\leq |a_1-a_2| \\
  &\leq \max\{a_1,a_2\} \\
  &\leq \max\left\{\frac{1}{4\sinh \gamma},\frac{1}{4\sinh t}\right\} \\
  &\leq \frac{1}{4\sinh \frac{\gamma}{2}},
\end{split}
\end{equation}
because $t \geq \frac{\gamma}{2}$. Since $\gamma \geq 2$, we have $\sinh \frac{\gamma}{2}=\frac{1}{2}e^{\frac{\gamma}{2}}(1-e^{-\gamma})\geq\frac{1}{4}e^{\frac{\gamma}{2}}$. Hence, $|\theta_1-\theta_2| \leq e^{-\frac{\gamma}{2}}$. Note that $a_j=\frac{1}{\sqrt{2}}e^{i\theta_j}$ for $j=1,2$, so by Corollary \ref{cor:keyestimateu2u1theta} and Lemma \ref{lem:fromGtoK}, the function $\psi_{\alpha}(u)=\varphi(D_{\alpha}uD_{\alpha})$, $u \in \U(2) \cong K$ satisfies
\begin{equation}
\begin{split}
  |\psi_{\alpha}(u_1)-\psi_{\alpha}(u_2)| &\leq \tilde{C}|\theta_1-\theta_2|^{\frac{1}{4}}\|\psi_{\alpha}\|_{M_0A(K)} \\
    &\leq \tilde{C}e^{-\frac{\gamma}{8}}\|\varphi\|_{M_0A(G)}.
\end{split}
\end{equation}
Since $D_{\alpha}u_1D_{\alpha} \in KD(\beta,\gamma)K$ and $D_{\alpha}u_2D_{\alpha} \in KD(2t,t)K$, it follows that
\[
  |\varphi(D(\beta,\gamma))-\varphi(D(2t,t))| \leq \tilde{C}e^{-\frac{\gamma}{8}}\|\varphi\|_{M_0A(G)}
\]
for all $\gamma \geq 2$. For $\gamma$ satisfying $0 < \gamma \leq 2$, we can instead use that $\|\varphi\|_{\infty} \leq \|\varphi\|_{M_0A(G)}$. Hence, with $C_4=\max\{\tilde{C},2e^{\frac{1}{4}}\}$, we obtain
\[
  |\varphi(D(\beta,\gamma))-\varphi(D(2t,t))| \leq C_4e^{-\frac{\gamma}{8}}\|\varphi\|_{M_0A(G)}
\]
for all $\beta \geq \gamma \geq 0$.
\end{proof}
\begin{lem} \label{lem:rhosigma}
  Let $s \geq t \geq 0$. Then the equations
\begin{equation} \label{eq:system1}
\begin{split}
  \sinh^2 \beta + \sinh^2 \gamma &= \sinh^2(2s)+\sinh^2 s, \\
  \sinh \beta \sinh \gamma &= \sinh(2t)\sinh t,
\end{split}
\end{equation}
have a unique solution $(\beta,\gamma) \in \bbR^2$ for which $\beta \geq \gamma \geq 0$. Moreover, if $1 \leq t \leq s \leq \frac{3t}{2}$, then
\begin{equation} \label{eq:system2}
\begin{split}
  |\beta-2s| &\leq 1, \\
  |\gamma+2s-3t| &\leq 1.
\end{split}
\end{equation}
\end{lem}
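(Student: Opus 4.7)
The plan is to reduce the system to a quadratic in $\sinh^2 \beta$ and $\sinh^2 \gamma$, which settles existence and uniqueness, and then to exploit the sharp two-sided estimate $\tfrac{1}{2}e^x(1-e^{-2x}) \leq \sinh x \leq \tfrac{1}{2}e^x$ to convert the resulting multiplicative control on $\sinh\beta$ and $\sinh\gamma$ into additive control on $\beta$ and $\gamma$. Throughout, write $A = \sinh^2(2s)+\sinh^2 s$ and $B = \sinh(2t)\sinh t$.

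For existence and uniqueness, set $x = \sinh^2\beta$ and $y = \sinh^2\gamma$. The system becomes $x+y = A$ and $xy = B^2$, so $x$ and $y$ are the two roots of $z^2 - Az + B^2 = 0$. The discriminant $A^2 - 4B^2$ is non-negative, since by AM--GM and the monotonicity of $\sinh$ on $[0,\infty)$,
\[
A = \sinh^2(2s) + \sinh^2 s \geq 2\sinh(2s)\sinh s \geq 2\sinh(2t)\sinh t = 2B.
\]
Assigning the larger root to $x$ and the smaller to $y$ gives a unique pair $x \geq y \geq 0$, and since $\sinh$ is a strictly increasing bijection of $[0,\infty)$ onto itself, this determines $(\beta,\gamma)$ uniquely under $\beta \geq \gamma \geq 0$.

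For the bound on $\beta$, the inequality $\sinh^2(2s) \leq A \leq 2\sinh^2(2s)$ combined with $\sinh^2\beta \in [A/2, A]$ yields
\[
\tfrac{1}{\sqrt{2}}\sinh(2s) \leq \sinh\beta \leq \sqrt{2}\sinh(2s).
\]
Applying the elementary two-sided bound on $\sinh$ and using $s \geq 1$ to keep $1-e^{-4s}$ bounded away from zero, this multiplicative window around $\sinh(2s)$ translates into an additive window around $2s$ of radius $\max\bigl(\ln\frac{\sqrt{2}}{1-e^{-4}},\;\ln\frac{\sqrt{2}}{1-e^{-3}}\bigr)$, which is strictly below $1$. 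For the bound on $\gamma$, use $\sinh\gamma = B/\sinh\beta$ together with the estimates on $\sinh\beta$ just obtained and the explicit expansions $\sinh(2t)\sinh t = \tfrac{1}{4}e^{3t}(1-e^{-2t})(1-e^{-4t})$ and $\sinh(2s) = \tfrac{1}{2}e^{2s}(1-e^{-4s})$. Under the hypothesis $1 \leq t \leq s \leq 3t/2$, this gives bounds of the form $c_1 e^{3t-2s} \leq \sinh\gamma \leq c_2 e^{3t-2s}$ with explicit constants $0 < c_1 < c_2$, and since $3t - 2s \in [0, t/2]$, inverting $\sinh$ once more yields $|\gamma - (3t-2s)| \leq 1$.

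The main obstacle is purely arithmetic: the constants $c_1, c_2$ and the logarithmic corrections in the $\beta$-estimate must all be pinned down tightly enough to land inside the target radius $1$. This is precisely where the hypothesis $t \geq 1$ is needed, as it keeps the exponentially small correction factors $(1-e^{-2t})$, $(1-e^{-4s})$, etc., safely bounded away from zero; the upper constraint $s \leq 3t/2$ ensures $3t-2s \geq 0$, without which the claimed nonnegative value $\gamma \approx 3t-2s$ could not arise.
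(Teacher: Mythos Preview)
Your proposal is correct and follows essentially the same route as the paper's proof: reduce to a quadratic in $\sinh^2\beta$ and $\sinh^2\gamma$, sandwich $\sinh\beta$ between multiples of $e^{2s}$ via $\sinh^2\beta\in[A/2,A]$, then bound $\sinh\gamma=B/\sinh\beta$ between multiples of $e^{3t-2s}$, and finally invert using $\sinh x\approx\tfrac12 e^x$ together with $t\ge 1$ to control the correction factors. One inconsequential slip: under $t\le s\le\tfrac{3t}{2}$ you have $3t-2s\in[0,t]$, not $[0,t/2]$, but only the nonnegativity is actually used.
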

\begin{proof}
Put $\rho(s)=\sinh^2 (2s) + \sinh^2 s$ for $s \geq 0$, and $\sigma(t) = 2 \sinh (2t) \sinh t$ for $t \geq 0$. Then $\rho$ and $\sigma$ are strictly increasing functions on $[0,\infty)$, and for all $s \geq 0$, we have $\rho(s)=\sigma(s)+(\sinh(2s)-\sinh s)^2 \geq 0$. Hence, for all $s \geq t \geq 0$, we have $\rho(s)-\sigma(t)\geq\sigma(s)-\sigma(t)\geq 0$. If $(\beta,\gamma) \in \bbR^2$ is a solution of \eqref{eq:system1} and $\beta \geq \gamma \geq 0$, then the pair $(x,y)=(\sinh\beta,\sinh\gamma)$ satisfies $x \geq y \geq 0$, and
\begin{equation} \nonumber
  (x \pm y)^2=\rho(s) \pm \sigma(t).
\end{equation}
Hence,
\begin{equation} \nonumber
\begin{split}
  x &= \frac{1}{2}\left(\sqrt{\rho(s)+\sigma(t)}+\sqrt{\rho(s)-\sigma(t)}\right), \\
  y &= \frac{1}{2}\left(\sqrt{\rho(s)+\sigma(t)}-\sqrt{\rho(s)-\sigma(t)}\right),
\end{split}
\end{equation}
and thus $(\beta,\gamma)=(\sinh^{-1} x, \sinh^{-1} y)$ is the unique solution to \eqref{eq:system1} satisfying $\beta \geq \gamma \geq 0$. To prove \eqref{eq:system2}, first observe that since $\sinh \beta \geq \sinh \gamma$, we obtain from \eqref{eq:system1} that $\frac{1}{2}\rho(s) \leq \sinh^2\beta \leq \rho(s)$ and $\sinh \beta \sinh \gamma=\frac{1}{2}\sigma(t)$. Hence, $\sqrt{\frac{\rho(s)}{2}} \leq \sinh \beta \leq \sqrt{\rho(s)}$ and $\frac{\sigma(t)}{\sqrt{4\rho(s)}} \leq \sinh \gamma \leq \frac{\sigma(t)}{\sqrt{2\rho(s)}}$. Using $s \geq t \geq 1$, we obtain
\begin{equation} \nonumber
\begin{split}
  \rho(s) &\leq \frac{1}{4}(e^{4s}+e^{2s}) \leq \frac{e^{4s}}{4}(1+e^{-2}) \leq \frac{1}{3}e^{4s},\\
  \rho(s) &\geq \frac{1}{4}(1-e^{-4s})^2e^{4s} \geq \frac{e^{4s}}{4}(1-e^{-4})^2 \geq \frac{1}{5}e^{4s},\\
  \sigma(t) &\leq \frac{1}{2}e^{3t},\\
  \sigma(t) &\geq \frac{1}{2}e^{3t}(1-e^{-4})(1-e^{-2})\geq\frac{1}{3}e^{3t}.
\end{split}
\end{equation}
Altogether, we have proved that
\begin{equation} \nonumber
\begin{split}
  \frac{e^{2s}}{\sqrt{10}} &\leq \sinh \beta \leq \frac{e^{2s}}{\sqrt{3}}, \\
  \frac{1}{2\sqrt{3}}e^{3t-2s} &\leq \sinh \gamma \leq \sqrt{\frac{5}{8}}e^{3t-2s}.
\end{split}
\end{equation}
From the first inequality we have $e^{\beta} \geq \frac{2}{\sqrt{10}}e^2$. Hence, $1-e^{-2\beta} \geq 1-\frac{5}{2}e^{-2} \geq \frac{1}{2}$, which implies that $e^{\beta} \leq 4 \sinh \beta \leq \frac{4}{\sqrt{3}}e^{2s}$ and $e^{\beta} \geq 2 \sinh \beta \geq \frac{2}{\sqrt{10}}e^{2s}$. Therefore, $|\beta-2s| \leq \max\{\log{\frac{4}{\sqrt{3}}},\log{\frac{\sqrt{10}}{2}}\} \leq 1$.

Under the extra assumption $s \leq \frac{3t}{2}$, we have $3t-2s \geq 0$. Hence, $\cosh^2 \gamma = \sinh^2 \gamma + 1 \leq \frac{5}{8} e^{6t-4s} + 1 \leq \frac{13}{18} e^{6t-4s}$, which implies that $e^{\gamma}=\sinh \gamma + \cosh \gamma \leq \left(\sqrt{\frac{5}{8}} + \sqrt{\frac{13}{8}}\right)e^{3t-2s} \leq 3 \sqrt{\frac{5}{8}} e^{3t-2s}$. Moreover, $e^{\gamma} \geq 2 \sinh \gamma \geq \frac{1}{\sqrt{3}} e^{3t-2s}$. Hence,
\[
  |\gamma+2s-3t| \leq \max\{\log(3\sqrt{\frac{5}{8}}),\log{\sqrt{3}}\} \leq 1.
\]
\end{proof}
\begin{lem} \label{lem:comparest}
  There exists a constant $C_5 > 0$ such that whenever $s,t \geq 0$ satisfy $2 \leq t \leq s \leq \frac{6}{5}t$, then for all $\varphi \in M_0A(G) \cap C(K \backslash G \slash K)$,
\[
  |\varphi(D(2s,s))-\varphi(D(2t,t))| \leq C_5 e^{-\frac{s}{16}} \|\varphi\|_{M_0A(G)}.
\]
\end{lem}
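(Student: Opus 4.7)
The plan is to exploit the lemmas already at hand by introducing an intermediate point $D(\beta,\gamma)$ that simultaneously matches $D(2s,s)$ via the ``circle'' system of Lemma \ref{lem:betagammacir} and matches $D(2t,t)$ via the ``hyperbola'' system of Lemma \ref{lem:betagammahyp}. Concretely, given $2 \leq t \leq s \leq \frac{6}{5}t$, I would first invoke Lemma \ref{lem:rhosigma} (whose hypotheses are met since $1 \leq t \leq s \leq \frac{3t}{2}$) to obtain a unique $(\beta,\gamma)$ with $\beta \geq \gamma \geq 0$ satisfying
\[
\sinh^2 \beta + \sinh^2 \gamma = \sinh^2(2s) + \sinh^2 s, \qquad \sinh \beta \sinh \gamma = \sinh(2t)\sinh t.
\]
By construction, $s$ is precisely the $s(\beta,\gamma)$ of Lemma \ref{lem:betagamma}, and similarly for $t$. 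The triangle inequality then yields
\[
|\varphi(D(2s,s)) - \varphi(D(2t,t))| \leq |\varphi(D(2s,s)) - \varphi(D(\beta,\gamma))| + |\varphi(D(\beta,\gamma)) - \varphi(D(2t,t))|,
\]
and the two terms on the right are controlled by Lemmas \ref{lem:betagammacir} and \ref{lem:betagammahyp} respectively, giving an overall bound
\[
\bigl(C_3 e^{-(\beta-\gamma)/8} + C_4 e^{-\gamma/8}\bigr)\|\varphi\|_{M_0A(G)}.
\]

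The remaining task is to show that both $e^{-(\beta-\gamma)/8}$ and $e^{-\gamma/8}$ are bounded by a constant multiple of $e^{-s/16}$; this is where the quantitative estimates \eqref{eq:system2} come in. From $|\beta - 2s| \leq 1$ and $|\gamma + 2s - 3t| \leq 1$ I get $\beta \geq 2s - 1$ and $\gamma \leq 3t - 2s + 1$, so
\[
\beta - \gamma \geq 4s - 3t - 2 \geq s - 2,
\]
using $t \leq s$. On the other hand, from $\gamma \geq 3t - 2s - 1$ together with $t \geq \frac{5s}{6}$ (which is $s \leq \frac{6}{5}t$ rewritten), I obtain
\[
\gamma \geq 3 \cdot \tfrac{5s}{6} - 2s - 1 = \tfrac{s}{2} - 1.
\]
Hence $e^{-(\beta-\gamma)/8} \leq e^{1/4}e^{-s/8} \leq e^{1/4}e^{-s/16}$ and $e^{-\gamma/8} \leq e^{1/8}e^{-s/16}$, so taking $C_5 = C_3 e^{1/4} + C_4 e^{1/8}$ completes the argument.

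The only mild subtlety is making sure the gap $s - \frac{6t}{5}$ in the hypothesis is tight enough to force $\gamma$ to grow linearly in $s$; this is precisely why the bound $s \leq \frac{6}{5}t$ is chosen strictly smaller than the $s \leq \frac{3}{2}t$ of Lemma \ref{lem:rhosigma}. Everything else is a routine bookkeeping exercise of combining the previous lemmas, so no genuine obstacle is anticipated.
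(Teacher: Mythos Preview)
Your proposal is correct and follows essentially the same approach as the paper: introduce $(\beta,\gamma)$ via Lemma~\ref{lem:rhosigma}, observe that this makes $s=s(\beta,\gamma)$ and $t=t(\beta,\gamma)$ in the sense of Lemma~\ref{lem:betagamma}, apply Lemmas~\ref{lem:betagammacir} and~\ref{lem:betagammahyp} via the triangle inequality, and then use the bounds~\eqref{eq:system2} together with $t\leq s\leq\tfrac{6}{5}t$ to show $\beta-\gamma\geq s-2$ and $\gamma\geq\tfrac{s}{2}-1$. The only cosmetic difference is in the final constant (the paper takes $C_5=e^{1/8}(C_3+C_4)$, using that $e^{-(s-2)/8}\leq e^{-(s-2)/16}$ for $s\geq 2$), but your choice $C_5=C_3 e^{1/4}+C_4 e^{1/8}$ works just as well.
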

\begin{proof}
  Choose $\beta \geq \gamma \geq 0$ as in Lemma \ref{lem:rhosigma}. Then by Lemma \ref{lem:betagammacir} and Lemma \ref{lem:betagammahyp}, we have
\begin{equation} \nonumber
\begin{split}
  |\varphi(D(2s,s))-\varphi(D(\beta,\gamma))| &\leq C_3e^{-\frac{\beta-\gamma}{8}}\|\varphi\|_{M_0A(G)}, \\
  |\varphi(D(2t,t))-\varphi(D(\beta,\gamma))| &\leq C_4e^{-\frac{\gamma}{8}}\|\varphi\|_{M_0A(G)}.
\end{split}
\end{equation}
Moreover, by \eqref{eq:system2},
\begin{equation} \nonumber
\begin{split}
  \beta - \gamma &\geq (2s-1) - (3t-2s+1) = 4s - 3t -2 \geq s-2, \\
  \gamma &\geq 3t-2s-1 \geq \frac{5}{2}s-2s-1=\frac{s-2}{2}.
\end{split}
\end{equation}
Hence, since $s \geq 2$, we have $\min\{e^{-\gamma},e^{-(\beta-\gamma)}\} \leq e^{-\frac{s-2}{2}}$. Thus, the lemma follows from Lemma \ref{lem:betagammacir} and Lemma \ref{lem:betagammahyp} with $C_5=e^{\frac{1}{8}}(C_3+C_4)$.
\end{proof}

\begin{lem} \label{lem:limit}
  There exists a constant $C_6 > 0$ such that for all $\varphi \in M_0A(G) \cap C(K \backslash G \slash K)$ the limit $c_{\infty}(\varphi)=\lim_{t \to \infty} \varphi(D(2t,t))$ exists, and for all $t \geq 0$,
\[
  |\varphi(D(2t,t))-c_{\infty}(\varphi)| \leq C_6e^{-\frac{t}{16}}\|\varphi\|_{M_0A(G)}.
\]
\end{lem}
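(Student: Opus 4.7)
The plan is to iterate Lemma~\ref{lem:comparest} along the geometric sequence $t_n = (6/5)^n t$, combined with a Cauchy-type argument. Write $f(t) := \varphi(D(2t,t))$ for brevity. Fix $t \geq 2$ and put $t_n = (6/5)^n t$, so $t_n \geq 2$ and $t_n \leq t_{n+1} \leq (6/5) t_n$ hold for every $n \geq 0$. Lemma~\ref{lem:comparest} then gives
\[
|f(t_{n+1}) - f(t_n)| \leq C_5 \, e^{-t_{n+1}/16} \, \|\varphi\|_{M_0A(G)}.
\]
For any $u \geq t$ choose $n$ with $u \in [t_n, t_{n+1}]$; applying Lemma~\ref{lem:comparest} once more (with the pair $(t_n,u)$ in place of $(t,s)$) bounds $|f(u) - f(t_n)|$ by $C_5 e^{-u/16} \|\varphi\|_{M_0A(G)}$. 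Telescoping yields
\[
|f(u) - f(t)| \leq C_5 \|\varphi\|_{M_0A(G)} \sum_{k=0}^{\infty} e^{-(6/5)^k t/16}.
\]

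The main (and only non-routine) step is to estimate this tail sum by a constant multiple of $e^{-t/16}$. Bernoulli's inequality gives $(6/5)^k \geq 1 + k/5$, so
\[
\sum_{k=0}^{\infty} e^{-(6/5)^k t/16} \leq \sum_{k=0}^{\infty} e^{-(1 + k/5)\, t/16} = \frac{e^{-t/16}}{1 - e^{-t/80}}.
\]
For $t \geq 2$ the denominator is bounded below by $1 - e^{-1/40} > 0$, so the sum is at most $C_7 \, e^{-t/16}$ for some absolute constant $C_7$. Therefore
\[
|f(u) - f(t)| \leq C_5 C_7 \, e^{-t/16} \, \|\varphi\|_{M_0A(G)} \quad \text{for all } u \geq t \geq 2.
\]

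This Cauchy-type estimate shows that $c_\infty(\varphi) := \lim_{u \to \infty} f(u)$ exists, and letting $u \to \infty$ in the displayed inequality gives the claimed bound for $t \geq 2$. For $t \in [0, 2)$, the trivial estimate $|f(t) - c_\infty(\varphi)| \leq 2\|\varphi\|_{\infty} \leq 2 \|\varphi\|_{M_0A(G)}$ combined with $e^{-t/16} \geq e^{-1/8}$ yields $|f(t) - c_\infty(\varphi)| \leq 2 e^{1/8} \, e^{-t/16} \|\varphi\|_{M_0A(G)}$. Taking $C_6 = \max\{C_5 C_7,\, 2 e^{1/8}\}$ finishes the proof. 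I expect no serious obstacle beyond the bookkeeping in the geometric tail estimate, which is routine once Bernoulli is invoked.
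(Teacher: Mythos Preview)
Your argument is correct and follows the same overall strategy as the paper: iterate Lemma~\ref{lem:comparest} to show that $(\varphi(D(2t,t)))_t$ is Cauchy with an exponential rate, then handle small $t$ trivially. The only difference is in the choice of step size. You take the geometric sequence $t_n=(6/5)^n t$, which leads to the tail $\sum_k e^{-(6/5)^k t/16}$ and requires Bernoulli's inequality to extract the factor $e^{-t/16}$. The paper instead takes unit arithmetic steps: for $u\geq 5$ and $\gamma\in[0,1]$ one has $u+\gamma\leq \tfrac{6}{5}u$, so Lemma~\ref{lem:comparest} applies to $(u,u+\gamma)$, and telescoping over $u=t,t+1,\dots$ gives the pure geometric series $\sum_{j\geq 0}e^{-(t+j)/16}=(1-e^{-1/16})^{-1}e^{-t/16}$ directly. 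Both routes are equally valid; the arithmetic step is slightly cleaner (no Bernoulli needed), while your geometric step uses the full strength of the $6/5$ ratio and works already from $t\geq 2$ rather than $t\geq 5$.
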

\begin{proof}
  By Lemma \ref{lem:comparest}, we have for $u \geq 5$ and $\gamma \in [0,1]$, that
\begin{equation} \label{eq:ugamma}
  |\varphi(D(2u,u))-\varphi(D(2u+2\gamma,u+\gamma))| \leq C_5e^{-\frac{u}{16}}\|\varphi\|_{M_0A(G)}.
\end{equation}
Let $s \geq t \geq 5$. Then $s=t+n+\delta$, where $n \geq 0$ is an integer and $\delta \in [0,1)$. Applying equation \eqref{eq:ugamma} to $(u,\gamma)=(t+j,1)$, $j=0,1,\ldots,n-1$ and $(u,\gamma)=(t+n,\delta)$, we obtain
\[
 |\varphi(D(2t,t))-\varphi(D(2s,s))| \leq C_5\left(\sum_{j=0}^n e^{-\frac{t+j}{16}}\right) \|\varphi\|_{M_0A(G)} \leq C_5^{\prime}e^{-\frac{t}{16}}\|\varphi\|_{M_0A(G)},
\]
where $C_5^{\prime}=(1-e^{-\frac{1}{16}})^{-1}C_5$. Hence $(\varphi(D(2t,t)))_{t \geq 5}$ is a Cauchy net. Therefore, $c_{\infty}(\varphi)=\lim_{t \to \infty} \varphi(D(2t,t))$ exists, and
\[
  |\varphi(D(2t,t))-c_{\infty}(\varphi)|=\lim_{s \to \infty} |\varphi(D(2t,t))-\varphi(D(2s,s))| \leq C_5^{\prime}e^{-\frac{t}{16}}\|\varphi\|_{M_0A(G)}
\]
for all $t \geq 5$. Since $\|\varphi\|_{\infty} \leq \|\varphi\|_{M_0A(G)}$, we have for all $0 \leq t < 5$,
\[
  |\varphi(D(2t,t))-c_{\infty}(\varphi)| \leq 2\|\varphi\|_{M_0A(G)}.
\]
Hence, the lemma follows with $C_6=\max\{C_5^{\prime},2e^{\frac{5}{16}}\}$.
\end{proof}
\begin{proof}[Proof of Proposition \ref{prp:sp2ab}]
Let $\varphi \in M_0A(G) \cap C(K \backslash G \slash K)$, and let $(\alpha_1,\alpha_2)=(\beta,\gamma)$, where $\beta \geq \gamma \geq 0$. Assume first $\beta \geq 2\gamma$. Then $\beta - \gamma \geq \frac{\beta}{2}$, so by Lemma \ref{lem:betagamma} and Lemma \ref{lem:betagammacir}, there exists an $s \geq \frac{\beta}{4}$ such that
\[
  |\varphi(D(\beta,\gamma))-\varphi(D(2s,s))| \leq C_3e^{-\frac{\beta}{16}}\|\varphi\|_{M_0A(G)}.
\]
By Lemma \ref{lem:limit},
\[
  |\varphi(D(2s,s))-c_{\infty}(\varphi)| \leq C_6e^{-\frac{s}{16}}\|\varphi\|_{M_0A(G)} \leq C_6e^{-\frac{\beta}{64}}\|\varphi\|_{M_0A(G)}.
\]
Hence,
\[
  |\varphi(D(\beta,\gamma))-c_{\infty}(\varphi)| \leq (C_3+C_6)e^{-\frac{\beta}{64}}\|\varphi\|_{M_0A(G)}.
\]
Assume now that $\beta < 2\gamma$. Then, by Lemma \ref{lem:betagamma} and Lemma \ref{lem:betagammahyp}, we obtain that there exists a $t \geq \frac{\gamma}{2} > \frac{\beta}{4}$ such that
\[
  |\varphi(D(\beta,\gamma))-\varphi(D(2t,t))| \leq C_4e^{-\frac{\beta}{16}}\|\varphi\|_{M_0A(G)},
\]
and by Lemma \ref{lem:limit},
\[
  |\varphi(D(2t,t))-c_{\infty}(\varphi)| \leq C_6e^{-\frac{t}{16}}\|\varphi\|_{M_0A(G)} \leq C_6e^{-\frac{\beta}{64}}\|\varphi\|_{M_0A(G)}.
\]
Hence,
\[
  |\varphi(D(\beta,\gamma))-c_{\infty}(\varphi)| \leq (C_4+C_6)e^{-\frac{\beta}{64}}\|\varphi\|_{M_0A(G)}.
\]
Therefore, for all $\beta \geq \gamma \geq 0$, we have
\[
  |\varphi(D(\beta,\gamma))-c_{\infty}(\varphi)| \leq C_1e^{-\frac{\beta}{64}}\|\varphi\|_{M_0A(G)},
\]
where $C_1=\max\{C_3+C_6,C_4+C_6\}$. This proves the proposition, because $\|\alpha\|_2 = \sqrt{\beta^2+\gamma^2} \leq \sqrt{2}\beta$.
\end{proof}
\begin{rmk}
In \cite[Definition 4.1]{lafforguebaumconnes}, Lafforgue introduces the property ($\mathrm{T}_{\mathrm{Schur}}$) for a locally compact group $G$ relative to a specified compact subgroup $K$ of $G$. It is not hard to see that our Proposition \ref{prp:sp2ab} implies the degenerate case $(s=0)$ of the property ($\mathrm{T}_{\mathrm{Schur}}$) for $G=\Sp(2,\bbR)$ relative to its maximal compact subgroup $K \cong \U(2)$. In the same way, Proposition \ref{prp:sl3ab} implies the degenerate case of the property ($\mathrm{T}_{\mathrm{Schur}}$) for $G=\SL(3,\bbR)$ relative to $K=\SO(3)$.
\end{rmk}

\section{Simple Lie groups with finite center and real rank greater than or equal to two do not have the Approximation Property} \label{sec:simpleliegroupsnotap}
In the previous section we proved that $\Sp(2,\bbR)$ does not have the AP. Together with the fact that $\SL(3,\bbR)$ does not have the AP, this implies the following theorem.
\begin{thm} \label{thm:maintheorem}
	Let $G$ be a connected simple Lie group with finite center and real rank greater than or equal to two. Then $G$ does not have the AP.
\end{thm}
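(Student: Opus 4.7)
The plan is to reduce the general case to the two known cases $G = \SL(3,\bbR)$ and $G = \Sp(2,\bbR)$ by exhibiting a closed subgroup that is locally isomorphic to one of these, and then combining this with Theorem~\ref{thm:sp2notap}, with the theorem of Lafforgue and de la Salle for $\SL(3,\bbR)$, and with the permanence properties of the AP already established in Section~\ref{sec:preliminaries}. The structural fact needed—that every connected simple Lie group with finite center and real rank at least two contains a closed connected subgroup locally isomorphic to $\SL(3,\bbR)$ or $\Sp(2,\bbR)$—is exactly the one used by the first author in \cite{haagerupgroupcsacbap} to treat the weak amenability analogue, so no new structural input is required.

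Concretely, the (irreducible) restricted root system $\Sigma$ of $G$ has rank at least two, and classification shows that $\Sigma$ contains a rank-two subsystem $\Sigma'$ of type $A_2$ or $C_2$ (the exceptional systems $G_2$ and $BC_n$ with $n \geq 2$ contain $A_2$ and $C_2$ respectively). Attaching a standard real $\mathfrak{sl}(2,\bbR)$-triple to each of two simple roots of $\Sigma'$, the real Lie subalgebra $\mathfrak h$ of $\mathfrak g$ that they generate is isomorphic to $\mathfrak{sl}(3,\bbR)$ or $\mathfrak{sp}(2,\bbR)$. Since $G$ has finite center, $G$ is a linear semisimple Lie group, and the analytic subgroup $H \subset G$ with Lie algebra $\mathfrak h$ is then a connected semisimple subgroup of a linear group, hence closed and with finite center.

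Granting this, the main argument is short. Suppose, for a contradiction, that $G$ has the AP. By \cite[Proposition 1.14]{haagerupkraus}, the AP passes to the closed subgroup $H$, so $H$ has the AP. Since $H$ is a connected simple Lie group with finite center locally isomorphic to $\SL(3,\bbR)$ or $\Sp(2,\bbR)$, Proposition~\ref{prp:locisoap} transfers the AP to $\SL(3,\bbR)$ or $\Sp(2,\bbR)$ itself, contradicting either the theorem of Lafforgue and de la Salle or Theorem~\ref{thm:sp2notap}. Therefore $G$ does not have the AP.

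All the analytic content sits in Section~\ref{sec:sp2}; the present section is essentially an application of the classification of simple real Lie algebras combined with the stability results for the AP. The only real obstacle is visible in the hypothesis: the finite-center assumption is used twice—once to guarantee that the linear structure of $G$ forces $H$ to have finite center, and once to invoke Proposition~\ref{prp:locisoap}—which is precisely why the infinite-center version of the theorem raised in the introduction remains out of reach by this method.
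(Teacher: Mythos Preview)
Your overall strategy coincides with the paper's: find a closed subgroup $H$ of $G$ locally isomorphic to $\SL(3,\bbR)$ or $\Sp(2,\bbR)$, then combine the hereditary property of the AP for closed subgroups with Proposition~\ref{prp:locisoap}. The paper carries this out by first replacing $G$ by its adjoint group (citing Wang \cite{wang}), then invoking \cite{boreltits}, \cite{margulis} for the existence of $H$ and \cite{dorofaeff} for its closedness.

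There is, however, a genuine error in your justification of the structural step. You assert that ``since $G$ has finite center, $G$ is a linear semisimple Lie group''; this implication is false. The metaplectic double cover of $\Sp(2,\bbR)$ is a connected simple Lie group of real rank two with finite center, yet it admits no faithful finite-dimensional representation. The paper circumvents this by passing to the adjoint group $\Ad(G)=G/Z(G)$ at the outset---which \emph{is} linear, being realized inside $\GL(\mathfrak g)$---and this reduction is legitimate precisely because Proposition~\ref{prp:locisoap} already shows that $G$ has the AP if and only if $\Ad(G)$ does. Once one works in a linear (indeed centerless) group the rest of your sketch is on the right track, though the closedness of the analytic subgroup $H$ is still not automatic and is the point where the paper appeals to \cite{dorofaeff}. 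So the fix is small, but the step as written fails.
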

\begin{proof}
Let $G$ be a connected simple Lie group with finite center and real rank greater than or equal to two. By Wang's method \cite{wang}, we may assume that $G$ is the adjoint group, so that $G$ has a connected splitting semisimple subgroup $H$ with real rank $2$. Such a subgroup is closed, as was proved in \cite{dorofaeff}. It is known that $H$ has finite center and is locally isomorphic to either $\SL(3,\bbR)$ or $\Sp(2,\bbR)$ \cite{boreltits},\cite{margulis}. Since the AP is passed to closed subgroups and as it is preserved under local isomorphisms (cf.~Proposition \ref{prp:locisoap}), we conclude that $G$ does not have the AP, since $\SL(3,\bbR)$ and $\Sp(2,\bbR)$ do not have the AP.
\end{proof}
\begin{rmk}
	Note that we could as well have stated the theorem for connected semisimple Lie groups with finite center such that at least one simple factor has real rank greater than or equal to two, since this factor would then contain a subgroup that is locally isomorphic to either $\SL(3,\bbR)$ or $\Sp(2,\bbR)$.
\end{rmk}
Let $n \geq 1$ and let $\mathbb{K}$ be field. Then countable discrete subgroups of $\GL(n,\mathbb{K})$ are exact. This was proven in \cite{guentnerhigsonweinberger}. Recall that a lattice in a second countable locally compact group is a closed discrete subgroup $\Gamma$ such that $G \slash \Gamma$ has bounded $G$-invariant measure. As mentioned in Section \ref{sec:introduction}, if $\Gamma$ is a lattice in a second countable locally compact group $G$, then $G$ has the AP if and only if $\Gamma$ has the AP. These observations imply the following result.
\begin{thm}
  Let $\Gamma$ be a lattice in a connected simple linear Lie group with finite center and real rank greater than or equal to two. Then $\Gamma$ is an exact group and does not satisfy the AP.
\end{thm}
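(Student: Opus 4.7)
The plan is to combine the two ingredients that the paper has just assembled: the Haagerup--Kraus transfer of the AP between a second countable locally compact group and any of its lattices, and the Guentner--Higson--Weinberger exactness result for countable discrete subgroups of $\GL(n,\mathbb{K})$. Both pieces plug in essentially as stated, so the proof should be quite short.

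First I would address the failure of the AP. Let $G$ be a connected simple linear Lie group with finite center and $\rr(G) \geq 2$, and let $\Gamma \subset G$ be a lattice. By Theorem \ref{thm:maintheorem}, $G$ does not have the AP. Since $G$ is second countable (being a Lie group) and $\Gamma$ is a lattice in $G$, the Haagerup--Kraus result recalled in Section \ref{sec:introduction} asserts that $G$ has the AP if and only if $\Gamma$ has the AP. Consequently $\Gamma$ cannot have the AP either.

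Next I would deduce exactness. By hypothesis $G$ is a linear Lie group, so there exist an integer $n \geq 1$ and a field $\mathbb{K}$ (one may simply take $\mathbb{K}=\bbR$) together with a faithful representation $G \hookrightarrow \GL(n,\mathbb{K})$. Restricting this embedding to $\Gamma$ realizes $\Gamma$ as a countable discrete subgroup of $\GL(n,\mathbb{K})$; countability is automatic from the fact that a discrete subgroup of a second countable group is countable. The theorem of Guentner, Higson, and Weinberger stated just above the statement then gives that $\Gamma$ is an exact group.

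Combining the two conclusions yields the theorem. The main obstacle, such as it is, is purely bookkeeping: one must verify that the hypotheses of the two cited external results are indeed met, namely that $G$ is second countable so that the lattice/group AP transfer applies, and that the linearity hypothesis on $G$ genuinely embeds $\Gamma$ in $\GL(n,\mathbb{K})$ so that Guentner--Higson--Weinberger applies. Both verifications are immediate, so there is no substantive obstacle; all of the real content has already been carried out in Section \ref{sec:sp2} and Theorem \ref{thm:maintheorem}.
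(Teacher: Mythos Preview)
Your proposal is correct and matches the paper's approach exactly: the paper does not write out a separate proof, but simply notes that the theorem follows from the two observations preceding it---the Guentner--Higson--Weinberger exactness result for countable linear groups and the Haagerup--Kraus transfer of the AP between a group and its lattices---combined with Theorem~\ref{thm:maintheorem}. You have unpacked those implications precisely as intended.
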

\begin{cor}
  For every lattice in a connected simple Lie group with finite center and real rank greater than or equal to two, the reduced group $C^{\ast}$-algebra $C_{\lambda}^{\ast}(\Gamma)$ does not have the OAP and the group von Neumann algebra $L(\Gamma)$ does not have the w*OAP.
\end{cor}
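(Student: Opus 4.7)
The plan is to deduce the corollary directly by chaining together the preceding theorem with the main results of Section~2 of \cite{haagerupkraus}, which translate the AP for a discrete group into approximation properties of its associated operator algebras; no new analytic input beyond what has already been assembled is needed.

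First I would invoke the preceding theorem (or, equivalently, Theorem~\ref{thm:maintheorem} together with the equivalence ``$G$ has the AP if and only if any lattice $\Gamma$ in $G$ has the AP,'' proved in \cite{haagerupkraus} and recalled in the introduction) to conclude that for every lattice $\Gamma$ of the type considered, $\Gamma$ itself does not have the AP. Strictly speaking, the preceding theorem is stated for lattices in \emph{linear} simple Lie groups; for the corollary I would either restrict to the linear case or, more cleanly, bypass it and pass AP directly from $G$ to $\Gamma$ via the Haagerup--Kraus transfer, which does not require linearity.

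Second, by the results of Section~2 of \cite{haagerupkraus}, for a countable discrete group $\Gamma$ the AP is equivalent to the OAP of the reduced group $\cs$-algebra $C_{\lambda}^{\ast}(\Gamma)$ and also to the w*OAP of the group von Neumann algebra $L(\Gamma)$. Combining these equivalences with the failure of AP for $\Gamma$ established in the first step yields the two negative statements in the corollary.

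There is essentially no obstacle here: the substantive mathematical content has already been absorbed into the proofs that $\Sp(2,\bbR)$ and $\SL(3,\bbR)$ fail the AP and into the stability results of Section~\ref{sec:simpleliegroupsnotap}, while the translation from AP of a discrete group to OAP/w*OAP of its operator algebras is a black-box application of \cite{haagerupkraus}. The only point requiring a moment of care is to ensure that $\Gamma$ sits in the setting where those equivalences apply, which is immediate since a lattice in a second countable locally compact group is countable and discrete.
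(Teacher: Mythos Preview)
Your proposal is correct and matches the paper's own reasoning: the corollary is stated without a separate proof, being an immediate consequence of the failure of the AP for $\Gamma$ (via Theorem~\ref{thm:maintheorem} and the Haagerup--Kraus lattice transfer) together with the equivalences from \cite[Section~2]{haagerupkraus} between the AP for a discrete group and the OAP/w*OAP of its operator algebras. Your observation that one should pass the AP directly from $G$ to $\Gamma$ rather than quoting the preceding theorem (which is stated for linear groups in order to secure exactness) is exactly the right way to handle the absence of the word ``linear'' in the corollary.
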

\begin{rmk}
  We do not know yet if the finite center condition in Theorem \ref{thm:maintheorem} can be omitted. If $G$ is a connected simple Lie group with real rank greater than or equal to two (and maybe infinite center), it contains a connected splitting semisimple subgroup $H$ locally isomorphic to either $\SL(3,\bbR)$ or $\Sp(2,\bbR)$. This implies that $H$ is a group isomorphic to a quotient of the universal cover of either $\SL(3,\bbR)$ or $\Sp(2,\bbR)$ by a discrete subgroup of the center of the universal cover. If $H$ is locally isomorphic to $\SL(3,\bbR)$, our arguments still hold, since the universal cover is finite. However, the universal cover of $\Sp(2,\bbR)$ is infinite, so our arguments do not work any longer. If the universal cover of $\Sp(2,\bbR)$ does not have the AP, then this would imply that the finite center condition in the theorem can be omitted.
\end{rmk}

\section{The group $\SL(3,\bbR)$} \label{sec:sl3}
In this section we consider the group $G=\SL(3,\bbR)$ with maximal compact subgroup $K=\SO(3)$. Recall that Lafforgue and de la Salle proved the following theorem \cite{ldls}.
\begin{thm}[Lafforgue - de la Salle] \label{thm:sl3notap}
	The group $\SL(3,\bbR)$ does not have the AP.
\end{thm}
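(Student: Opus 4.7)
The plan is to adapt the strategy of Section~\ref{sec:sp2} to $G=\SL(3,\bbR)$ with maximal compact subgroup $K=\SO(3)$. By Lemma~\ref{lem:restrictiontoKbiinvariantfunctions}, if $G$ had the AP, the constant function $1$ could be approximated, in the $\sigma(M_0A(G),M_0A(G)_*)$-topology, by a net in $A(G) \cap C(K \backslash G / K)$. Such functions vanish at infinity, so as in the proof of Theorem~\ref{thm:sp2notap} it will suffice to establish the $\SL(3,\bbR)$-analogue of Proposition~\ref{prp:sp2ab}: there exist constants $C_1, C_2>0$ such that for every $\varphi \in M_0A(G) \cap C(K \backslash G / K)$ the limit $\varphi_\infty = \lim_{g \to \infty} \varphi(g)$ exists and $|\varphi(g)-\varphi_\infty| \leq C_1 e^{-C_2 \|\alpha\|_2} \|\varphi\|_{M_0A(G)}$ for $g = \diag(e^{\alpha_1},e^{\alpha_2},e^{\alpha_3})\in \overline{A^+}$. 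Combined with the Krein--Smulian Theorem and Lemma~\ref{lem:kbiinvariantboundedmultipliersclosed}, this forces $M_0A(G) \cap C_0(K \backslash G / K)$ to be weak-* closed, excluding $1$ from the weak-* closure of $A(G) \cap C(K \backslash G / K)$ and contradicting the AP.

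To prove this decay estimate, I would exploit a compact Gelfand pair sitting inside $G$, namely $(\SO(3),\SO(2))$, whose spherical functions are the Legendre polynomials (cf.~the remark following Corollary~\ref{cor:keyestimatesu2so2}). By Proposition~\ref{prp:cbfmcgp} every $\SO(2)$-bi-invariant completely bounded Fourier multiplier on $\SO(3)$ decomposes as $\sum_n c_n P_n$ with $\sum_n |c_n| = \|\cdot\|_{M_0A(\SO(3))}$, and Lemma~\ref{lem:lpestimates} then yields a uniform $\tfrac{1}{2}$-Hölder estimate on $[-\tfrac12,\tfrac12]$ (the analogue of Corollary~\ref{cor:keyestimatesu2so2}). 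One next selects, for each $\alpha \geq 0$, a diagonal element $D_\alpha \in A$ whose centralizer in $K$ contains the chosen copy of $\SO(2)$ (e.g.~$D_\alpha = \diag(e^\alpha, e^{-\alpha/2}, e^{-\alpha/2})$ fixes an embedded $\SO(2)$ rotating the last two coordinates; a second family adapted to the other simple root should be used in parallel). As in Lemma~\ref{lem:fromGtoK}, restricting to maps $k \mapsto D_\alpha k D_\alpha$ (or $D_\alpha k D_\alpha^{-1}$) transfers a $K$-bi-invariant multiplier $\varphi$ on $G$ to an $\SO(2)$-bi-invariant multiplier $\psi_\alpha$ on $K$ with $\|\psi_\alpha\|_{M_0A(K)} \leq \|\varphi\|_{M_0A(G)}$.

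The core step, analogous to Lemmas~\ref{lem:kakeqs}--\ref{lem:circleseqs}, is to work out the algebraic correspondence between the $KAK$-parameters $(\alpha_1,\alpha_2,\alpha_3)$ of $D_\alpha k D_\alpha$ and the Gelfand-pair coordinates of $k$. Using symmetric invariants of $g - (g^t)^{-1}$ or $g^t g$ (which on $\overline{A^+}$ reduce to symmetric functions of $e^{2\alpha_i}$), one expresses these in closed form and solves for families of $k \in K$ that realize a prescribed point of $\overline{A^+}$. This lets one move along two transverse paths in the Weyl chamber: one that keeps one symmetric invariant fixed and one that keeps another, mirroring the roles played by the $(2s,s)$ and $(\beta,\gamma)\leftrightarrow (2s,s),(2t,t)$ comparisons of Lemmas~\ref{lem:betagammacir}--\ref{lem:comparest}. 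Combining the Hölder bound on $\psi_\alpha$ with these algebraic identities, a pointwise estimate of the form $|\varphi(g) - \varphi(g')| \lesssim e^{-c \min(\alpha_1-\alpha_2,\alpha_2-\alpha_3)} \|\varphi\|_{M_0A(G)}$ should emerge, from which a Cauchy-net argument (analogue of Lemma~\ref{lem:limit}) produces the limit $\varphi_\infty$ and the final exponential bound.

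The main obstacle I anticipate is Stage~3: choosing $D_\alpha$ and the auxiliary families of elements $k \in K$ so that the resulting symmetric-function identities can be solved cleanly in closed form and give exponential (rather than merely polynomial) decay in both "root directions" $\alpha_1 - \alpha_2$ and $\alpha_2 - \alpha_3$ simultaneously. Unlike $\Sp(2,\bbR)$, where the Hermitian structure of $\U(2)$ provided two very symmetric Gelfand pairs $(\U(2),\U(1))$ and $(\SU(2),\SO(2))$ with explicit $\tfrac14$- and $\tfrac12$-Hölder estimates, here a single Gelfand pair $(\SO(3),\SO(2))$ with $\tfrac12$-Hölder estimates must be leveraged in two different ways corresponding to the two simple roots; ensuring that these two applications together cover the whole positive Weyl chamber will require a careful case split $\alpha_1 - \alpha_2 \gtrless \alpha_2 - \alpha_3$ analogous to the dichotomy $\beta \gtrless 2\gamma$ at the end of the proof of Proposition~\ref{prp:sp2ab}.
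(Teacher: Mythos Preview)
Your plan is essentially the paper's own approach: reduce to a decay estimate for $K$-bi-invariant multipliers (Proposition~\ref{prp:sl3ab}), use the Gelfand pair $(\SO(3),\SO(2))$ together with the Legendre-polynomial H\"older bound of Lemma~\ref{lem:lpestimates}, transfer $\varphi$ to a multiplier $\psi_r$ on $K$ via $k\mapsto D(r,0)kD(r,0)$, and finish with a Cauchy-net argument along the diagonal $D(u,u)$.

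The obstacle you anticipate in Stage~3 dissolves more easily than you expect. Only one family $D(r,0)$ is needed, and the $2\times 2$ polar decomposition inside the upper-left block gives directly $\psi_r^0(\cos\theta)=\varphi(D(2q,r-q))$ with $\sinh q=|\cos\theta|\sinh r$; the H\"older bound then yields
\[
  |\varphi(D(2q,r-q))-\varphi(D(0,r))|\leq 4e^{-\frac{r-q}{2}}\|\varphi\|_{M_0A(G)}.
\]
For the second root direction the paper does not build a second $D_\alpha$-family: it uses the symmetry $\varphi(D(s,t))=\check\varphi(D(t,s))$ with $\check\varphi(g)=\varphi(g^{-1})$ and $\|\check\varphi\|_{M_0A(G)}=\|\varphi\|_{M_0A(G)}$, which immediately turns the estimate above into one with the roles of $s$ and $t$ swapped. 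This replaces your proposed parallel construction and makes the case split $s\lessgtr t$ straightforward.
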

We will give a proof of this theorem along the same lines as our proof for the group $\Sp(2,\bbR)$. In particular, we will not make use of the $\apschur$ for $1 < p < \infty$. It is clear that Theorem \ref{thm:sl3notap} is implied by Proposition \ref{prp:sl3ab} below in exactly the same way that Theorem \ref{thm:sp2notap} is implied by Proposition \ref{prp:sp2ab}, namely by applying the Krein-Smulian Theorem to show that the space $M_0A(G) \cap C_0(K \backslash G \slash K)$ is closed in $M_0A(G)$ in the $\sigma(M_0A(G),M_0A(G)_{*})$-topology.

Let $G$, $K$, $A,\overline{A^{+}}$ be as defined in Example \ref{exm:sl3}. Then $G=K\overline{A^{+}}K$. Following the notation of \cite[Section 2]{lafforguestrengthenedpropertyt} and \cite[Section 5]{ldls}, put $D(s,t)=e^{-\frac{s+2t}{3}}\diag(e^{s+t},e^{t},1)$, where $s,t \in \bbR$. Then $A=\{D(s,t) \mid s,t \in \bbR\}$ and $\overline{A^{+}}=\{D(s,t) \mid s \geq 0, \; t \geq 0\}$.
\begin{prp} \label{prp:sl3ab}
  Let $G=\SL(3,\bbR)$ and $K=\SO(3)$, and let $M_0A(G) \cap C(K \backslash G \slash K)$ denote the set of $K$-bi-invariant completely bounded Fourier multipliers on $G$. Then there exist constants $C_1,C_2 > 0$ such that for all $\varphi \in M_0A(G) \cap C(K \backslash G \slash K)$ the limit $\varphi_{\infty}:=\lim_{g \to \infty} \varphi(g)$ exists, and for all $s,t \geq 0$,
\begin{equation} \nonumber
  |\varphi(D(s,t))-\varphi_{\infty}| \leq C_1\|\varphi\|_{M_0A(G)}e^{-C_2(s+t)}.
\end{equation}
\end{prp}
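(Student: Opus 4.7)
\medskip

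\noindent\textbf{Plan.} The strategy is to mirror, almost line by line, the proof of Proposition \ref{prp:sp2ab}, with the following economies. Only one compact Gelfand pair is needed, namely $(\SO(3),\SO(2))$, whose spherical functions are the Legendre polynomials $P_n$. By Proposition \ref{prp:cbfmcgp} applied to $(\SO(3),\SO(2))$, every $\SO(2)$-bi-invariant completely bounded Fourier multiplier $\chi$ on $\SO(3)$ has the form $\chi(k)=\chi^{0}(\langle ke_{3},e_{3}\rangle)$ with $\sum|c_{n}|=\|\chi\|_{M_{0}A(\SO(3))}$; together with Lemma \ref{lem:lpestimates} this yields the basic H\"older estimate
\[
  |\chi^{0}(r_{1})-\chi^{0}(r_{2})|\le 4|r_{1}-r_{2}|^{\frac{1}{2}}\,\|\chi\|_{M_{0}A(\SO(3))},\quad r_{1},r_{2}\in[-\tfrac12,\tfrac12].
\]
This plays the role that Corollary \ref{cor:keyestimateu2u1theta} and Corollary \ref{cor:keyestimatesu2so2} jointly played in the $\Sp(2,\bbR)$ argument.

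\medskip

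\noindent Next, for each of the three natural embeddings $\SO(2)_{i}\hookrightarrow\SO(3)$ (stabiliser of $e_{i}$), let $D^{(i)}_{\alpha}\in A$ be the (essentially unique) diagonal element of $A$ whose two equal diagonal entries correspond to the plane rotated by $\SO(2)_{i}$; explicitly $D^{(2)}_{\alpha}=\diag(e^{\alpha},e^{-2\alpha},e^{\alpha})$ and cyclic variants. Exactly as in Lemma \ref{lem:fromGtoK} (using the Bo\.zejko--Fendler characterisation with the $\SO(2)_{i}$-commutation), for any $\varphi\in M_{0}A(G)\cap C(K\backslash G/K)$ the function
\[
  \psi^{(i)}_{\alpha}(k):=\varphi(D^{(i)}_{\alpha}\,k\,D^{(i)}_{\alpha}),\qquad k\in K,
\]
is $\SO(2)_{i}$-bi-invariant and satisfies $\|\psi^{(i)}_{\alpha}\|_{M_{0}A(K)}\le\|\varphi\|_{M_{0}A(G)}$. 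The H\"older estimate above then transfers to $\psi^{(i)}_{\alpha}$.

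\medskip

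\noindent The core technical work is the $\SL(3,\bbR)$ analogue of Lemmas \ref{lem:kakeqs}--\ref{lem:hyperbolaseqs}: given a pair $(s,t)$ with $s,t\ge 0$, choose an index $i$, a scalar $\alpha\ge 0$ and an element $u\in\SO(3)$ such that $D^{(i)}_{\alpha}uD^{(i)}_{\alpha}\in KD(s,t)K$. The $KAK$-parameters are determined (up to the Weyl group) by the symmetric functions of the eigenvalues of $g^{t}g$; for $g=D^{(i)}_{\alpha}uD^{(i)}_{\alpha}$ these translate, after a direct computation, into two polynomial equations in $\alpha$ and the coordinates of $ue_{i}$, analogous to \eqref{eq:hyperbolaseqs} and the pair of equations in Lemma \ref{lem:circleseqs}. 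By symmetry under the outer automorphism $(s,t)\mapsto(t,s)$ of $\overline{A^{+}}$ (coming from $g\mapsto(g^{t})^{-1}$) it suffices to handle the regime $s\ge t$, where one picks $i$ so that the image of the construction sweeps through the appropriate slice of $\overline{A^{+}}$. The central ray we aim to land on is $D(u,u)=\diag(e^{u},1,e^{-u})$ (the $\alpha_{2}=0$ locus, i.e.\ the embedded $\SL(2,\bbR)$), which is the analogue of the rays $(2s,s)$ and $(2t,t)$ appearing in the $\Sp(2,\bbR)$ proof.

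\medskip

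\noindent With those ingredients, the remainder of the argument is essentially formal. First, for every $(s,t)\in\overline{A^{+}}$ one obtains, by the H\"older bound applied to the resulting $\psi^{(i)}_{\alpha}$, an estimate
\[
  |\varphi(D(s,t))-\varphi(D(u,u))|\le C\,e^{-c\min(s,t)}\,\|\varphi\|_{M_{0}A(G)}
\]
for a suitable $u=u(s,t)$ with $u\gtrsim\max(s,t)$ (the counterpart of Lemmas \ref{lem:betagammacir}--\ref{lem:betagammahyp} combined). Second, a one-parameter telescoping argument along the ray $u\mapsto D(u,u)$, in complete analogy with Lemmas \ref{lem:comparest} and \ref{lem:limit}, shows that $\varphi_{\infty}:=\lim_{u\to\infty}\varphi(D(u,u))$ exists and that $|\varphi(D(u,u))-\varphi_{\infty}|\le C'e^{-c'u}\|\varphi\|_{M_{0}A(G)}$. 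The two estimates together give the claimed bound $|\varphi(D(s,t))-\varphi_{\infty}|\le C_{1}e^{-C_{2}(s+t)}\|\varphi\|_{M_{0}A(G)}$; the trivial range where $s+t$ is bounded is absorbed by $\|\varphi\|_{\infty}\le\|\varphi\|_{M_{0}A(G)}$, after adjusting $C_{1}$. The main obstacle is the explicit $KAK$-bookkeeping in the middle step: one must verify that the polynomial equations admit, for each $(s,t)$ with $s\ge t\ge 0$, a solution in which the two ``Legendre variables'' associated with the two reference points $D(s,t)$ and $D(u,u)$ lie in $[-\tfrac12,\tfrac12]$ and differ by at most $e^{-c\min(s,t)}$, so that the H\"older bound of exponent $\tfrac12$ actually delivers the required decay; everything else is bookkeeping.
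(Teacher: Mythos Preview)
Your plan follows the paper's approach closely: the single compact Gelfand pair $(\SO(3),\SO(2))$, the H\"older estimate coming from Lemma~\ref{lem:lpestimates} via Proposition~\ref{prp:cbfmcgp}, the transfer $\psi_\alpha(k)=\varphi(D_\alpha k D_\alpha)$ as in Lemma~\ref{lem:fromGtoK}, a reduction to the diagonal ray $D(u,u)$, and a telescoping argument along that ray. The paper in fact does it more economically than you sketch: only \emph{one} embedding $K_0=\SO(2)\hookrightarrow\SO(3)$ (stabiliser of $e_1$) is used, paired with the outer automorphism $\varphi\mapsto\check\varphi$, and the ``$KAK$-bookkeeping'' is replaced by a direct $\SL(2,\bbR)$ polar decomposition in the upper-left $2\times2$ block (Lemma~\ref{lem:lemma5}), avoiding the symmetric-function computation you propose.

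There is one genuine gap in your displayed estimate. Your step~1 claims
\[
  |\varphi(D(s,t))-\varphi(D(u,u))|\le C\,e^{-c\min(s,t)}\,\|\varphi\|_{M_0A(G)},\qquad u\gtrsim\max(s,t),
\]
but combined with step~2 this yields only $Ce^{-c\min(s,t)}+C'e^{-c'\max(s,t)}$, which does \emph{not} decay along the walls of the Weyl chamber: for $(s,0)$ with $s\to\infty$ the first term is a nonzero constant. What the paper actually produces (Lemma~\ref{lem:lemma6}) are \emph{two} reductions to the diagonal, one with decay $e^{-t/3}$ (landing at $u=(s+2t)/3$) and one with decay $e^{-s/3}$ (landing at $u=(2s+t)/3$, obtained from the first via $\check\varphi$). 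Choosing the appropriate one gives decay $e^{-\max(s,t)/3}\le e^{-(s+t)/6}$, which is what closes the argument. Your proposed framework with several embeddings $\SO(2)_i$ can equally well produce both routes, but you must record that it is $\max$, not $\min$, that appears, and that the choice of route (equivalently, of $i$) depends on whether $s\ge t$ or $s\le t$.
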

In \cite[Proposition 2.3]{lafforguestrengthenedpropertyt} Lafforgue proved a similar result for coefficients of certain non-unitary representations of $G=\SL(3,\bbR)$. Below we will outline a proof of Proposition \ref{prp:sl3ab} that relies on the methods of \cite[Section 2]{lafforguestrengthenedpropertyt} and of the previous sections of this paper.

Consider the pair of compact groups $(K,K_0)$, where $K$ is as above and $K_0$ is the subgroup of $K$ isomorphic to $\SO(2)$ given by the embedding
\[
	\SO(2) \hookrightarrow \left( \begin{array}{cc} 1 & 0 \\ 0 & \SO(2) \end{array} \right).
\]
It is easy to see that if $\varphi$ is a $K_0$-bi-invariant function on $K$, then $\varphi$ depends only on the first matrix element $g_{11}$, i.e., $\varphi(g)=\varphi^0(g_{11})$ for a certain function $\varphi^0:[-1,1] \lra \bbC$.
\begin{lem} \label{lem:K0biinvariantmultipliers}
	Let $\varphi:K \lra \bbC$ be a $K_0$-bi-invariant completely bounded Fourier multiplier. Then $\varphi(g)=\varphi^0(g_{11})$ and for all $x \in [-1,1]$,
\[
	|\varphi^0(x)-\varphi^0(0)| \leq 4\|\varphi\|_{M_0A(K)} |x|^{\frac{1}{2}}.
\]
\end{lem}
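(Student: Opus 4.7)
The plan is to exploit the classical Gelfand pair structure of $(K,K_0)=(\SO(3),\SO(2))$ and to combine Proposition \ref{prp:cbfmcgp} with the uniform H\"older estimate for Legendre polynomials already established in Lemma \ref{lem:lpestimates}. First I would record the two structural facts: the double cosets $K_0\backslash K/K_0$ are parametrised by the first matrix entry $g_{11}\in[-1,1]$, and, as noted in the remark following Corollary \ref{cor:keyestimatesu2so2}, the spherical functions of $(\SO(3),\SO(2))$ are exactly the Legendre polynomials evaluated at this entry, i.e.\ $h_n(g)=P_n(g_{11})$ for $n\geq 0$.

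Applying Proposition \ref{prp:cbfmcgp} to the compact Gelfand pair $(K,K_0)$ then yields a unique decomposition
\[
\varphi(g)=\sum_{n=0}^{\infty}c_n P_n(g_{11}),\qquad \sum_{n=0}^{\infty}|c_n|=\|\varphi\|_{M_0A(K)}.
\]
This immediately shows that $\varphi$ factors through $g_{11}$, i.e.\ $\varphi(g)=\varphi^0(g_{11})$ with $\varphi^0(x)=\sum_{n\geq 0}c_n P_n(x)$, so the first assertion is proved.

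For the H\"older estimate on $[-1,1]$ I would split into two cases. For $x\in[-\tfrac12,\tfrac12]$, Lemma \ref{lem:lpestimates} gives $|P_n(x)-P_n(0)|\leq 4|x|^{1/2}$ for every $n$, so term-by-term summation yields
\[
|\varphi^0(x)-\varphi^0(0)|\leq\sum_{n=0}^{\infty}|c_n|\,|P_n(x)-P_n(0)|\leq 4|x|^{1/2}\sum_{n=0}^{\infty}|c_n|=4\|\varphi\|_{M_0A(K)}|x|^{1/2}.
\]
For $x\in[-1,1]$ with $|x|>\tfrac12$, the H\"older bound of Lemma \ref{lem:lpestimates} is unavailable, but in that range one has $4|x|^{1/2}\geq 2\sqrt{2}>2$, and the trivial inequality $\|\varphi^0\|_\infty\leq\|\varphi\|_\infty\leq\|\varphi\|_{M_0A(K)}$ (which follows from the Bo\.zejko--Fendler characterisation recalled in Section \ref{sec:introduction}) gives $|\varphi^0(x)-\varphi^0(0)|\leq 2\|\varphi\|_{M_0A(K)}\leq 4|x|^{1/2}\|\varphi\|_{M_0A(K)}$.

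There is no genuine obstacle: the only substantive ingredients are the identification of the spherical functions (from the remark) and the polynomial estimate (Lemma \ref{lem:lpestimates}). The only care needed is to handle the range $|x|>\tfrac12$ separately, since Lemma \ref{lem:lpestimates} is stated only on $[-\tfrac12,\tfrac12]$; the trivial sup-norm bound closes this gap comfortably.
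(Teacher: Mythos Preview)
Your proposal is correct and follows essentially the same approach as the paper: identify the spherical functions of $(\SO(3),\SO(2))$ as Legendre polynomials, apply Proposition~\ref{prp:cbfmcgp}, and use Lemma~\ref{lem:lpestimates} on $[-\tfrac12,\tfrac12]$. The only cosmetic difference is that for $|x|>\tfrac12$ the paper extends the per-term bound $|P_n(x)-P_n(0)|\leq 2\leq 4|x|^{1/2}$ using $|P_n|\leq 1$ before summing, whereas you bound $|\varphi^0(x)-\varphi^0(0)|$ directly via $\|\varphi\|_\infty\leq\|\varphi\|_{M_0A(K)}$; both are trivially equivalent.
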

\begin{proof}
By \cite{faraut} and \cite{vandijk}, the pair $(\SO(3),\SO(2))$ is a compact Gelfand pair, and the spherical functions are indexed by $n \geq 0$, and given by $\varphi_n(g)=P_n(g_{11})$, where $P_n$ again denotes the $n^{\textrm{th}}$ Legendre polynomial. By Proposition \ref{prp:cbfmcgp} the function $\varphi^0$ can be written as $\varphi^0=\sum_{n\geq0} c_nP_n$, where $c_n \in \bbC$ and $\sum_{n\geq0}|c_n|=\|\varphi\|_{M_0A(K)}$. Moreover, by Lemma \ref{lem:lpestimates} we know that
\begin{equation} \label{eq:lpestimate}
  |P_n(x)-P_n(0)| \leq 4|x|^{\frac{1}{2}}
\end{equation}
for $n \in \mathbb{N}_0$ and $x \in [-\frac{1}{2},\frac{1}{2}]$. Since $|P_n(x)| \leq 1$ for all $n \in \mathbb{N}_0$ and $x \in [-1,1]$, the inequality given by \eqref{eq:lpestimate} holds for $\frac{1}{2} < |x| \leq 1$ as well. The result now follows.
\end{proof}
\begin{lem} \label{lem:psir}
  Let $\varphi \in M_0A(G) \cap C(K \backslash G \slash K)$, and $r \geq 0$. Then the function $\psi_r:K \lra \bbC$ defined by $\psi_r(k)=\varphi(D(r,0)kD(r,0))$ is $K_0$-bi-invariant and $\|\psi_r\|_{M_0A(K)} \leq \|\varphi\|_{M_0A(G)}$.
\end{lem}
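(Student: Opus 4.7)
The plan is to prove Lemma \ref{lem:psir} by a direct adaptation of the proof of Lemma \ref{lem:fromGtoK}. The two ingredients are (i) that $D(r,0)$ commutes with $K_0$ as a matrix in $\SL(3,\bbR)$, and (ii) the Bo\.zejko--Fendler representation of $\varphi$ used in Lemma \ref{lem:fromGtoK}.

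For bi-invariance, I would first compute $D(r,0) = e^{-r/3}\diag(e^r,1,1) = \diag(e^{2r/3},e^{-r/3},e^{-r/3})$, so its lower right $2\times 2$ block is a scalar multiple of the identity. Consequently $D(r,0)$ commutes with every element of $K_0$, which is the image of $\SO(2)$ under the embedding $R \mapsto \diag(1,R)$. Therefore, for any $k \in K$ and $k_0,k_0' \in K_0 \subset K$,
\[
 \psi_r(k_0 k k_0') = \varphi(D(r,0) k_0 k k_0' D(r,0)) = \varphi(k_0 D(r,0) k D(r,0) k_0') = \varphi(D(r,0) k D(r,0)) = \psi_r(k),
\]
where in the third equality I use that $\varphi$ is $K$-bi-invariant.

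For the norm estimate, I would invoke the Bo\.zejko--Fendler characterization \eqref{eq:cbmcharacterization} to obtain bounded continuous maps $P,Q : G \lra \mathcal{H}$ with $\varphi(y^{-1}x) = \langle P(x), Q(y) \rangle$ for all $x,y \in G$ and $\|P\|_\infty \|Q\|_\infty = \|\varphi\|_{M_0A(G)}$. Then for all $k_1,k_2 \in K$,
\begin{equation} \nonumber
\begin{split}
 \psi_r(k_2^{-1} k_1) &= \varphi(D(r,0) k_2^{-1} k_1 D(r,0)) = \varphi((k_2 D(r,0)^{-1})^{-1}(k_1 D(r,0))) \\
 &= \langle P(k_1 D(r,0)), Q(k_2 D(r,0)^{-1}) \rangle = \langle P_r(k_1), Q_r(k_2) \rangle,
\end{split}
\end{equation}
where $P_r,Q_r : K \lra \mathcal{H}$ are defined by $P_r(k) = P(k D(r,0))$ and $Q_r(k) = Q(k D(r,0)^{-1})$. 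Since $KD(r,0), KD(r,0)^{-1} \subset G$, we have $\|P_r\|_\infty \leq \|P\|_\infty$ and $\|Q_r\|_\infty \leq \|Q\|_\infty$, hence $\|\psi_r\|_{M_0A(K)} \leq \|P\|_\infty \|Q\|_\infty = \|\varphi\|_{M_0A(G)}$.

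There is no real obstacle here: the proof is essentially identical to that of Lemma \ref{lem:fromGtoK}, with the only check being the commutation of $D(r,0)$ with $K_0$, which is immediate from the block-scalar structure of $D(r,0)$.
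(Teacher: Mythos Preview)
Your proof is correct and follows exactly the same approach as the paper: the paper's proof simply notes that $D(r,0)=e^{-r/3}\diag(e^r,1,1)$ commutes with $K_0$ and then refers back to the proof of Lemma~\ref{lem:fromGtoK}, whose details you have faithfully written out.
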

\begin{proof}
  The matrix $D(r,0)=e^{-\frac{r}{3}}\diag(e^r,1,1)$ commutes with $K_0$. Therefore the lemma follows from the proof of Lemma \ref{lem:fromGtoK}.
\end{proof}
\begin{lem} \label{lem:lemma5}
  Let $\varphi \in M_0A(G) \cap C(K \backslash G \slash K)$, and let $q,r \in \bbR$ such that $r \geq q \geq 0$. Then
\begin{equation} \label{eq:sl3estimate}
  |\varphi(D(2q,r-q))-\varphi(D(0,r))| \leq 4e^{-\frac{r-q}{2}}\|\varphi\|_{M_0A(G)}.
\end{equation}
\end{lem}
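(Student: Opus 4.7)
The plan is to follow the pattern of Section \ref{sec:sp2}: I will realize both $D(2q, r-q)$ and $D(0, r)$ as lying in the $K$-double coset of $D(r,0) k D(r,0)$ for suitable $k \in \SO(3)$, and then invoke Lemmas \ref{lem:psir} and \ref{lem:K0biinvariantmultipliers} to transfer the estimate to the H\"older-$\frac{1}{2}$ control of the $K_0$-bi-invariant multiplier $\psi_r$ on $K$.

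The main technical step is to identify the $KAK$-class of $g := D(r, 0) k D(r, 0)$ purely in terms of the scalar $a := k_{11}$. Since $D(r,0) = \diag(e^{2r/3}, e^{-r/3}, e^{-r/3})$ has the second and third diagonal entries equal, and $D(r,0)$ commutes with $K_0$, the $K$-double coset of $g$ depends only on the $K_0 \backslash K / K_0$-class of $k$, which is parametrised by $k_{11}$. To pin down the correspondence between $a$ and $(s,t)$ in $D(s,t)$, I will compute $\operatorname{tr}(g g^T)$ and $\operatorname{tr}(g^{-1} g^{-T})$. Using $k^T k = k k^T = I$, both traces reduce to explicit polynomials in $a^2$,
\begin{align*}
\operatorname{tr}(g g^T) &= 2 e^{2r/3} + e^{-4r/3} + a^2 e^{-4r/3}(e^{2r} - 1)^2, \\
\operatorname{tr}(g^{-1} g^{-T}) &= 2 e^{-2r/3} + e^{4r/3} + a^2 e^{4r/3}(1 - e^{-2r})^2,
\end{align*}
and matching these with the squared singular values of $D(0, r)$ and of $D(2q, r-q)$ (using $(e^{2r}-1)^2 = 4 e^{2r} \sinh^2 r$) will give $a = 0$ for $D(0,r)$ and $a = \sinh q / \sinh r$ for $D(2q, r-q)$. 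The hypothesis $0 \leq q \leq r$ is precisely what ensures $|a| \leq 1$, so a rotation $k \in \SO(3)$ with $k_{11} = a$ exists (by completing $(a, \sqrt{1-a^2}, 0)^T$ to an orthonormal basis).

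With this identification, pick any $k_1, k_0 \in \SO(3)$ with $(k_1)_{11} = \sinh q / \sinh r$ and $(k_0)_{11} = 0$. Lemma \ref{lem:psir} gives $\|\psi_r\|_{M_0 A(K)} \leq \|\varphi\|_{M_0 A(G)}$, and Lemma \ref{lem:K0biinvariantmultipliers} then yields
\[
|\varphi(D(2q, r-q)) - \varphi(D(0, r))| = |\psi_r^0(\sinh q / \sinh r) - \psi_r^0(0)| \leq 4 \|\varphi\|_{M_0 A(G)} \sqrt{\tfrac{\sinh q}{\sinh r}}.
\]
To finish, I will invoke the elementary inequality $\sinh q \leq e^{q-r} \sinh r$ for $0 \leq q \leq r$, which is equivalent to $e^{q-2r} \leq e^{-q}$, i.e.\ $q \leq r$, and therefore gives $\sqrt{\sinh q / \sinh r} \leq e^{-(r-q)/2}$, yielding the stated bound with constant $4$. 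I do not anticipate a genuine obstacle: the only bookkeeping-heavy step is the trace calculation that produces the coset identification required by Lemma \ref{lem:K0biinvariantmultipliers}.
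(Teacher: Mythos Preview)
Your proposal is correct and follows essentially the same route as the paper: realise $D(2q,r-q)$ and $D(0,r)$ as $KD(r,0)kD(r,0)K$ for suitable $k$ with $k_{11}=\sinh q/\sinh r$ (respectively $k_{11}=0$), then apply Lemmas \ref{lem:psir} and \ref{lem:K0biinvariantmultipliers} together with the inequality $\sinh q/\sinh r\le e^{q-r}$. The only difference is cosmetic: the paper picks the explicit rotation $k=\left(\begin{smallmatrix}\cos\theta&\sin\theta&0\\ \sin\theta&\cos\theta&0\\ 0&0&1\end{smallmatrix}\right)$ and reads off the $K\overline{A^{+}}K$-class of $D(r,0)kD(r,0)$ via the $\SL(2,\bbR)$ polar decomposition of the upper $2\times 2$ block (obtaining $\sinh q=|\cos\theta|\sinh r$), whereas you identify the class by matching the $K$-bi-invariant scalars $\Tr(gg^{t})$ and $\Tr(g^{-1}g^{-t})$; since $\det(gg^{t})=1$, these two traces determine all three singular values, so your argument is complete. (You should note, as the paper does, that the case $r=0$ is trivial since then $q=0$ as well.)
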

\begin{proof}
  If $r=q=0$, then equation \eqref{eq:sl3estimate} is trivial, so we can assume that $r>0$. Let $\psi_r(g)=\psi_r^0(g_{11})$ be the map defined in Lemma \ref{lem:psir}. It follows that
\begin{equation} \nonumber
\begin{split}
  \psi_{r}^0(\cos{\theta})&=\varphi\left(D(r,0)\left(\begin{array}{ccc} \cos\theta & \sin\theta & 0 \\ \sin\theta & \cos\theta & 0 \\ 0 & 0 & 1 \end{array}\right)D(r,0)\right)\\
    &=\varphi\left(e^{-\frac{2r}{3}}\left(\begin{array}{ccc} e^{2r}\cos\theta & -e^{r}\sin\theta & 0 \\ e^r\sin\theta & \cos\theta & 0 \\ 0 & 0 & 1 \end{array}\right)\right).
\end{split}
\end{equation}
By the polar decomposition of $\SL(2,\bbR)$, there exist $k_1,k_2 \in \SO(2)$ and a $q \geq 0$ such that
\[
	\left( \begin{array}{cc} e^r\cos{\theta} & -\sin{\theta} \\ \sin{\theta} & e^{-r}\cos{\theta} \end{array} \right)=k_1\left( \begin{array}{cc} e^{q} & 0 \\ 0 & e^{-q} \end{array} \right)k_2.
\]
Comparing the Hilbert-Schmidt norms (similar to the method we applied for the case $\Sp(2,\bbR)$) and subtracting $2=2(\sin^2{\theta}+\cos^2{\theta})$ on both sides, we obtain $(e^{r}-e^{-r})^2\cos^2{\theta}=(e^{q}-e^{-q})^2$. It follows that
\begin{equation} \label{eq:equationforq}
  \sinh q = \left\vert\cos{\theta}\right\vert \sinh r,
\end{equation}
and all values of $q \in [0,r]$ occur for some $\theta \in [0,\frac{\pi}{2}]$. By defining $\tilde{k_i}=\left( \begin{array}{cc} k_i & 0 \\ 0 & 1 \end{array} \right)$ for $i=1,2$, we get
\[
  D(r,0)\left(\begin{array}{ccc} \cos\theta & \sin\theta & 0 \\ \sin\theta & \cos\theta & 0 \\ 0 & 0 & 1 \end{array}\right)D(r,0) = \tilde{k_1}D(2q,r-q)\tilde{k_2},
\]
and hence, by the $\SO(3)$-bi-invariance of $\varphi$, we get $\psi_r^0(\cos{\theta})=\varphi(D(2q,r-q))$. For $\theta = \frac{\pi}{2}$, we have $q=0$. Therefore $\psi_r^0(0)=\varphi(D(0,r))$. Hence, for $r>0$ and $r \geq q \geq 0$, we have $\psi_r^0(\cos{\theta})-\psi_r^0(0)=\varphi(D(2q,r-q))-\varphi(D(0,r))$ if equation \eqref{eq:equationforq} holds. Hence, by Lemma \ref{lem:K0biinvariantmultipliers} we have
\[
  |\varphi(D(2q,r-q))-\varphi(D(0,r))| \leq 4 \|\varphi\|_{M_0A(G)} \left(\frac{\sinh q}{\sinh r}\right)^{\frac{1}{2}} \leq 4 \|\varphi\|_{M_0A(G)} e^{-\frac{r-q}{2}},
\]
where we have used that for $r \geq q \geq 0$ and $r>0$ the following holds,
\begin{equation} \nonumber
  \frac{\sinh q}{\sinh r} = e^{q-r} \left( \frac{1-e^{-2q}}{1-e^{-2r}} \right) \leq e^{q-r}.
\end{equation}
This proves the lemma.
\end{proof}
\begin{lem} \label{lem:lemma6}
  Let $\varphi \in M_0A(G) \cap C(K \backslash G \slash K)$. For $s,t \geq 0$,
\begin{equation} \nonumber
\begin{split}
  \biggl\vert \varphi\left(D\left(s,t\right)\right)-\varphi\left(D\left(\frac{s+2t}{3},\frac{s+2t}{3}\right)\right) \biggr\vert &\leq 8\|\varphi\|_{M_0A(G)}e^{-\frac{t}{3}},\\
  \biggl\vert \varphi\left(D\left(s,t\right)\right)-\varphi\left(D\left(\frac{2s+t}{3},\frac{2s+t}{3}\right)\right) \biggr\vert &\leq 8\|\varphi\|_{M_0A(G)}e^{-\frac{s}{3}}.
\end{split}
\end{equation}
\end{lem}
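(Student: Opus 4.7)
The plan is to derive both inequalities from Lemma \ref{lem:lemma5}, with the second following from the first by an elementary symmetry argument.

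For the first inequality, my strategy is to observe that both points $D(s, t)$ and $D(\tfrac{s+2t}{3}, \tfrac{s+2t}{3})$ lie on a single path to which Lemma \ref{lem:lemma5} applies. Setting $r = (s+2t)/2$, a direct check shows that $q_1 = s/2$ yields $D(2q_1, r - q_1) = D(s, t)$, while $q_2 = (s+2t)/6$ yields $D(2q_2, r - q_2) = D(\tfrac{s+2t}{3}, \tfrac{s+2t}{3})$; both values lie in $[0, r]$. Applying Lemma \ref{lem:lemma5} at each $q_i$ relative to the common reference point $D(0, r)$ and using the triangle inequality gives
\[
  |\varphi(D(s,t)) - \varphi(D(\tfrac{s+2t}{3}, \tfrac{s+2t}{3}))| \leq 4\bigl(e^{-(r-q_1)/2} + e^{-(r-q_2)/2}\bigr)\|\varphi\|_{M_0A(G)} = 4\bigl(e^{-t/2}+e^{-(s+2t)/6}\bigr)\|\varphi\|_{M_0A(G)}.
\]
Since $s, t \geq 0$, one has $e^{-t/2} \leq e^{-t/3}$ and $e^{-(s+2t)/6} = e^{-s/6 - t/3} \leq e^{-t/3}$, and the first estimate follows with the constant $8$.

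For the second inequality, I would exploit the outer automorphism $\tau \colon g \mapsto (g^t)^{-1}$ of $G = \SL(3, \bbR)$. This $\tau$ is a continuous group automorphism that stabilises $K = \SO(3)$ (indeed, it fixes $\mathrm{O}(3)$ elementwise), so composition with $\tau$ is an isometry of $M_0A(G)$ that preserves $C(K \backslash G \slash K)$; the norm preservation follows immediately from the Bo\.zejko--Fendler characterisation in Section \ref{sec:introduction}, applied to the pair $(P\circ\tau, Q\circ\tau)$. A short matrix computation shows that $\tau(D(s,t)) = \diag(e^{-(2s+t)/3}, e^{-(t-s)/3}, e^{(s+2t)/3})$ is Weyl-equivalent to $D(t, s)$, so by $K$-bi-invariance $(\varphi\circ\tau)(D(s,t)) = \varphi(D(t,s))$. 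Applying the already-proved first inequality to $\tilde\varphi = \varphi \circ \tau$ at the parameter pair $(t, s)$ gives exactly the second inequality.

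The whole argument is essentially routine once one notices the parameterisation $r = (s+2t)/2$ that puts both points on the path in Lemma \ref{lem:lemma5}; the only mild subtlety is checking that $\tau$ acts as $(s,t) \mapsto (t,s)$ on $\overline{A^+}$ after reordering the diagonal entries into the positive Weyl chamber. An alternative, symmetry-free route would be to prove a dual of Lemma \ref{lem:lemma5} in which one conjugates by $D(0, r)$ (which commutes with the top-left copy of $\SO(2)$ inside $K$) in place of $D(r, 0)$, and then repeat the argument of the first inequality; this yields the second bound directly without invoking $\tau$.
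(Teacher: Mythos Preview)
Your proof is correct and follows essentially the same route as the paper. For the first inequality the paper makes exactly the same substitution $r=(s+2t)/2$, $q_1=s/2$, $q_2=r/3$ and bounds via $D(0,r)$; for the second inequality the paper uses $\check\varphi(g)=\varphi(g^{-1})$ together with $\varphi(D(s,t))=\check\varphi(D(t,s))$, which is the same symmetry as your Cartan involution $\tau(g)=(g^t)^{-1}$ (they agree on diagonal matrices, and both preserve $K$-bi-invariance and the $M_0A$-norm).
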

\begin{proof}
  From Lemma \ref{lem:lemma5}, it follows that in the special case $q=\frac{r}{3}$ we have 
\[
  \biggl\vert \varphi\left(D\left(\frac{2r}{3},\frac{2r}{3}\right)\right)-\varphi(D(0,r))\biggr\vert \leq 4\|\varphi\|_{M_0A(G)}e^{-\frac{r}{3}}.
\]
Combined with the estimate of Lemma \ref{lem:lemma5} it follows that in the general case we have $|\varphi(D(2q,r-q))-\varphi(D(\frac{2r}{3},\frac{2r}{3}))| \leq A_1\|\varphi\|_{M_0A(G)}$, where $A_1=4(e^{-\frac{r-q}{2}}+e^{-\frac{r}{3}})$. Substituting $(s,t)=(2q,r-q)$, we get for all $s,t \geq 0$ that
\[
  \biggl\vert \varphi(D(s,t))-\varphi\left(D\left(\frac{s+2t}{3},\frac{s+2t}{3}\right)\right) \biggr\vert \leq A_2\|\varphi\|_{M_0A(G)},
\]
where $A_2 = 4(e^{-\frac{t}{2}}+e^{-\frac{s+2t}{6}}) \leq 8e^{-\frac{t}{3}}$, which proves the first inequality of the lemma.

By the $\SO(3)$-bi-invariance of $\varphi$, it follows that
\[
  \varphi(\diag(e^{\alpha_1},e^{\alpha_2},e^{\alpha_3})) = \varphi(\diag(e^{\alpha_3},e^{\alpha_2},e^{\alpha_1}))
\]
whenever $\alpha_1+\alpha_2+\alpha_3=0$. Hence $\varphi(D(s,t))=\varphi(D(-t,-s))=\check{\varphi}(D(t,s))$, where $\check{\varphi}(g)=\varphi(g^{-1})$ for all $g \in G$. Since $\|\check{\varphi}\|_{M_0A(G)}=\|\varphi\|_{M_0A(G)}$, we obtain the second inequality of the lemma by applying the first inequality to $\check{\varphi}$ with $s$ and $t$ interchanged.
\end{proof}
\begin{lem} \label{lem:lemma7}
  Let $\varphi \in M_0A(G) \cap C(K \backslash G \slash K)$, and let $u,v \geq 0$ such that $\frac{2}{3}u \leq v \leq \frac{3}{2}u$. Then
\[
  |\varphi(D(u,u))-\varphi(D(v,v))| \leq 16\|\varphi\|_{M_0A(G)}e^{-\frac{w}{6}},
\]
where $w=\min\{u,v\}$.
\end{lem}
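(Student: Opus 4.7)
The plan is to connect $D(u,u)$ and $D(v,v)$ via an intermediate point $D(s,t)$ chosen so that Lemma \ref{lem:lemma6} applied at $D(s,t)$ yields $D(u,u)$ through the first projection and $D(v,v)$ through the second. Concretely, we want $\tfrac{s+2t}{3}=u$ and $\tfrac{2s+t}{3}=v$, a linear system with unique solution
\[
s = 2v - u, \qquad t = 2u - v.
\]

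First I would verify that this choice lies in the closed positive Weyl chamber and that the exponents control correctly. The hypothesis $\tfrac{2u}{3} \leq v \leq \tfrac{3u}{2}$ gives $s = 2v-u \geq \tfrac{u}{3} \geq 0$ and $t = 2u-v \geq \tfrac{u}{2} \geq 0$, so $D(s,t) \in \overline{A^+}$ and Lemma \ref{lem:lemma6} applies. Furthermore, the inequality $v \leq \tfrac{3u}{2}$ gives $t \geq \tfrac{u}{2}$, hence $e^{-t/3} \leq e^{-u/6}$, while $v \geq \tfrac{2u}{3}$ gives $s = 2v - u \geq \tfrac{v}{2}$, hence $e^{-s/3} \leq e^{-v/6}$.

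Now I would apply both inequalities of Lemma \ref{lem:lemma6} at $(s,t)$: the first gives
\[
|\varphi(D(s,t)) - \varphi(D(u,u))| \leq 8\|\varphi\|_{M_0A(G)}e^{-t/3} \leq 8\|\varphi\|_{M_0A(G)}e^{-u/6},
\]
and the second gives
\[
|\varphi(D(s,t)) - \varphi(D(v,v))| \leq 8\|\varphi\|_{M_0A(G)}e^{-s/3} \leq 8\|\varphi\|_{M_0A(G)}e^{-v/6}.
\]
The triangle inequality then yields $|\varphi(D(u,u)) - \varphi(D(v,v))| \leq 8\|\varphi\|_{M_0A(G)}(e^{-u/6} + e^{-v/6}) \leq 16\|\varphi\|_{M_0A(G)}e^{-w/6}$ with $w = \min\{u,v\}$, which is the claimed estimate.

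There is no real obstacle here: the whole argument is a single triangle inequality once the intermediate point is correctly identified. The only mild subtlety is checking that the numerical range $\tfrac{2}{3}u \leq v \leq \tfrac{3}{2}u$ is exactly what is needed to guarantee both nonnegativity of $(s,t)$ and the favorable replacements $t \geq u/2$, $s \geq v/2$ in the exponents — i.e., the hypothesis on $(u,v)$ has been tuned precisely to make the symmetric bound $e^{-w/6}$ come out of the two one-sided estimates of Lemma \ref{lem:lemma6}.
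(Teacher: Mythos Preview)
Your proof is correct and essentially identical to the paper's: the paper also sets $s=2v-u$, $t=2u-v$, applies both inequalities of Lemma~\ref{lem:lemma6} at $(s,t)$, and bounds $e^{-t/3}+e^{-s/3}$ by $e^{-u/6}+e^{-v/6}\leq 2e^{-w/6}$ using the same arithmetic $\tfrac{2u-v}{3}\geq\tfrac{u}{6}$ and $\tfrac{2v-u}{3}\geq\tfrac{v}{6}$.
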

\begin{proof}
  Put $s=2v-u$ and $t=2u-v$. Then $s,t \geq 0$, and $u=\frac{s+2t}{3}$ and $v=\frac{2s+t}{3}$. Hence, by Lemma \ref{lem:lemma6}, we get $|\varphi(D(s,t))-\varphi(D(u,u))| \leq 8\|\varphi\|_{M_0A(G)}e^{-\frac{t}{3}}$, and $|\varphi(D(s,t))-\varphi(D(v,v))| \leq 8\|\varphi\|_{M_0A(G)}e^{-\frac{s}{3}}$. Hence,
\[
  |\varphi(D(u,u))-\varphi(D(v,v))| \leq A_3\|\varphi\|_{M_0A(G)},
\]
where $A_3=8(e^{-\frac{s}{3}}+e^{-\frac{t}{3}})=8(e^{-\frac{2u-v}{3}}+e^{-\frac{2v-u}{3}})$. By the assumptions on $u$ and $v$, we obtain $\frac{2u-v}{3} \geq \frac{u}{6}$ and $\frac{2v-u}{3} \geq \frac{v}{6}$. Hence, $A_3 \leq 8(e^{-\frac{u}{6}}+e^{-\frac{v}{6}}) \leq 16 e^{-\frac{w}{6}}$, where $w=\min\{u,v\}$. This proves the lemma.
\end{proof}
\begin{proof}[Proof of Proposition \ref{prp:sl3ab}.]
Applying the method of the proof of the case $\Sp(2,\bbR)$, it is clear that Lemma \ref{lem:lemma7} implies that $c:=\lim_{u \to \infty} \varphi(D(u,u))$ exists. Moreover, for $u \geq 2$,
\begin{equation} \nonumber
\begin{split}
  |\varphi(D(u,u))-c| &\leq \sum_{n=0}^{\infty} |\varphi(D(u+n+1,u+n+1))-\varphi(D(u+n,u+n))|\\
    &\leq 16 e^{-\frac{u}{6}} \|\varphi\|_{M_0A(G)} \sum_{n=0}^{\infty} e^{-\frac{n}{6}}\\
    &\leq 112 e^{-\frac{u}{6}} \|\varphi\|_{M_0A(G)},
\end{split}
\end{equation}
since $\sum_{n=0}^{\infty} e^{-\frac{n}{6}} \leq 7$. Since $|\varphi(D(u,u))-c| \leq 2\|\varphi\|_{M_0A(G)}$ for $0 \leq u \leq 2$, we have for all $u \geq 0$ that $|\varphi(D(u,u))-c| \leq 112 e^{-\frac{u}{6}}\|\varphi\|_{M_0A(G)}$. Let now $s,t \geq 0$. If $s \leq t$, then this implies that
\[
  |\varphi(D(s,t))-c| \leq (8 e^{-\frac{t}{3}} + 112 e^{-\frac{s+2t}{18}})\|\varphi\|_{M_0A(G)} \leq (8 e^{-\frac{s+t}{6}} + 112 e^{-\frac{s+t}{12}})\|\varphi\|_{M_0A(G)}.
\]
If $s \geq t$, then we get the same inequality. Hence the proposition holds with $\varphi_{\infty}=c$, $C_1=120$ and $C_2=\frac{1}{12}$.
\end{proof}

\section*{Acknowledgements}
We thank Henrik Schlichtkrull for interesting discussions and for providing us with important references to the literature. We thank Magdalena Musat, Alexandre Nou and the referee for numerous useful suggestions and remarks.

\end{document}